\newtheorem{theorem}{Theorem}
\newtheorem{lemma}[theorem]{Lemma}
\newtheorem{corollary}[theorem]{Corollary}
\newtheorem{proposition}[theorem]{Proposition}
\theoremstyle{definition}
\newtheorem{definition}[theorem]{Definition}
\newtheorem{example}[theorem]{Example}
\newtheorem{algorithm}[theorem]{Algorithm}
\theoremstyle{remark}
\author{Valent\'{\i}n Mendoza}\address{Instituto de Matem\'atica e Computa\c c\~ao, Universidade Federal
de Itajub\'a,  Av. BPS 1303, Bairro Pinheirinho, CEP 37500-903, Itajub\'a, MG, Brazil.}
\email{valentin@unifei.edu.br}
\title{Dynamics forced by homoclinic orbits}
\subjclass[2010]{Primary  37E30, 37E15, 37C29. Secondary 37B10, 37D20.}
\date{October 30, 2015.}
\keywords{Homoclinic orbits, braid types, dynamical core, Smale horseshoe, pruning theory.}
\begin{document}
%\raggedright
%\raggedleft
%\justify
%\bibliographystyle{alpha}
\openup 0.9\jot
\begin{abstract}
The complexity of a dynamical system exhibiting  a homoclinic orbit is given by 
the orbits that it forces. In this work we present a method, based in  
pruning theory, to determine the dynamical core of a homoclinic orbit of a Smale 
diffeomorphism on the $2$-disk. Due to Cantwell and Conlon, this set is uniquely 
determined in the isotopy class of the orbit, up a topological conjugacy, so 
it contains the dynamics forced by the homoclinic orbit. Moreover we apply the method 
for finding the orbits forced by certain infinite families of homoclinic horseshoe 
orbits and propose its generalization to an arbitrary Smale map.
\end{abstract}
\maketitle
\section{Introduction}

Since the Poincar\'e's discovery of homoclinic orbits, it is known that
dynamical systems with one of these orbits have a very complex behaviour.
Such a feature was explained by Smale in terms of his celebrated
horseshoe map \cite{Sma}; more precisely, if a surface diffeomorphism
$f$ has a homoclinic point then, there exists an invariant set
$\Lambda$ where a power of $f$ is conjugated to the \textit{shift} $\sigma$
defined on the compact space  $\Sigma_2=\{0,1\}^\mathbb{Z}$ of symbol sequences. 

To understand how complex is a diffeomorphism having a periodic or homoclinic 
orbit, we need the notion of forcing. First we will define it for 
periodic orbits.
\subsection{Braid types of periodic orbits}
Let $P$ and $Q$ be two periodic  orbits of homeomorphisms  $f$ and $g$ of the closed disk $D^2$, 
respectively. 
We say that $(P,f)$ and $(Q,g)$ are \textit{equivalent} if there is an 
orientation-preserving homeomorphism $h:D^2\rightarrow D^2$ 
with $h(P)=Q$ such that $f$ is isotopic to $h^{-1}\circ g\circ h$ relative to $P$. 
The equivalence class containing $(P,f)$ is called the \textit{braid type} 
$\operatorname{bt}(P,f)$. When the homeomorphism $f$ is fixed, it will be 
written $\operatorname{bt}(P)$ instead of $\operatorname{bt}(P,f)$.

Now we can define the forcing relation between two braid types $\beta$ and $\gamma$.
We say that  $\beta$ \textit{forces} $\gamma$, denoted by $\beta\geqslant_2\gamma$,
if every homeomorphism of $D^2$ which has an orbit with braid type $\beta$, exhibits also 
an orbit with braid type $\gamma$. So we say that $P$ \textit{forces} $Q$, 
denoted by $P\geqslant_2Q$, if $\operatorname{bt}(P)\geqslant_2 \operatorname{bt}(Q)$.
In \cite{Boy} Boyland  proved that $\geqslant_2$ is a partial order. 
If $\mathcal{E}(P)$ is the set of periodic points whose orbits are forced by $P$,
one can ensure that the dynamics of a diffeomorphism $f$ containing an 
orbit with the braid type of $P$ is at least as complicated as $f$ is restricted 
to $\mathcal{E}(P)$.  To find $\mathcal{E}(P)$ it is necessary to use the 
Nielsen-Thurston theory of classification of surface homeomorphisms up to isotopies. 
In fact, Asimov and Franks \cite{AsiFra} and Hall \cite{Hall1} showed
that if $f$  is pseudo-Anosov on $D^2\setminus P$ then $\mathcal{E}(P)$ is formed by the 
periodic points of its canonical representative $\phi$. If $P$ is of reducible type
then the Nielsen-Thurston representative $\phi$ does not have the minimal number 
of periodic orbits, but a refinement of $\phi$, due to Boyland  \cite{Boy,BoyIso},
is used to get a condensed map which satisfies that property.

An interesting case is when $P$ is a periodic orbit of a Smale map $f$ on $D^2$, 
that is, $f$ is an Axiom A map with a strong transversality between the invariant 
manifolds of its non-wandering points. In this context the  notion of \textit{bigon} 
has became a tool for finding the Nielsen-Thurston representative rel to $P$:
a \textit{bigon} is a simply  connected open region disjoint from $P$ which is bounded 
by a stable segment and an unstable segment. 
In fact, Bonatti and Jeandenans \cite{BonLanJea} have proved that if there are no
wandering bigons rel to a periodic orbit $P$ of a transitive basic piece $K$, 
there exists a pseudo-Anosov map $\phi$, with possibly 
$1$-pronged singularities,  and a semiconjugacy $\pi$ between $f$ and $\phi$ which 
is injective on the set of periodic orbits of $K$ except on the boundary ones. If we do 
not admit non-wandering bigons too, the Bonatti-Jeandenans map $\phi$ is actually 
pseudo-Anosov and then the non-boundary periodic orbits of $K$ are all forced by $P$. 
Thus the non-existence of bigons implies that the periodic orbits of $K$ are forced by $P$.
A similar result was obtained by Lewowicz and Ures in \cite{LewUre}: if $f$ is a Smale map 
without bigons relative to $P$, which is called \textit{exteriorly situated}, then its basic set 
is contained in the persistent set given by Handel \cite{Han} relative to $P$.
More precisely it follows from Grines  \cite{GriRep,GriTop} that if the non-wandering 
set of a Smale map contains an exteriorly situated  basic set that does not contain 
\textit{special pairs} of boundary periodic point (definition \ref{def:special}) then there exists a semiconjugacy between
 $f$ and a hyperbolic homeomorphism $f_0$ which can be considered the canonical 
 hyperbolic representative of the  homotopy class of $f$. See \cite{AraGri,GriMedPoc}.
 
Since there exists a decomposition in basic pieces for Smale maps, the same analysis can 
even be applied if $f$ has several transitive pieces and does not have bigons rel
 to $P$, considering the return maps to each basic piece. It will be the case, for example, 
 if $P$ is a renormalization of two or more orbits of  pseudo-Anosov type  \cite{MenOrd}.
\subsection{Forcing on homoclinic orbits}
Let us now suppose that $P$ is a homoclinic orbit to a fixed point of a 
Smale diffeomorphism. In that case there are substantial differences but also 
 useful similarities with the periodic case.  First in \cite{LosFor} Los has proved 
 that the forcing relation on periodic orbits can be extended to homoclinic or 
 heteroclinic orbits in a suitable topology. 
There are several methods for finding the dynamics forced by a
 homoclinic orbit of a diffeomorphism $f$. 
For example, in \cite{Coll1,Coll3}, Collins has constructed  
 surface hyperbolic diffeomorphisms associated to a homoclinic orbit which 
 can be used for approximating the  entropy of that orbit as 
 close as we want. His method uses the homoclinic tangle associated 
 to the orbit. In  \cite{TanYam1,TanYam2}, Yamaguchi and Tanikawa have studied 
the forcing relation of reversible homoclinic horseshoe orbits appearing in 
area-preserving H\'enon maps. In other direction Boyland and Hall have given
 conditions for which a periodic orbit is isotopy stable relative to a compact 
 set \cite{BoyHall}, and their result can be used for studying homoclinic orbits.

Since $f$ restricted to $M_P:=\operatorname{Int}(D^2)\setminus P$ is isotopic to an end periodic homeomorphism
(lemma \ref{lem:end}), 
it is more convenient to use the Handel-Miller theory of classification of
end periodic automorphisms on surfaces \cite{HanMil} as is presented in \cite{Fen}. 
In this case Cantwell and Conlon \cite{CanCon2} have proved that there exist  
a Handel-Miller map $h$ and a set $\mathcal{C}_P$, called \textit{dynamical core}, 
which is  the intersection of the pair of totally disconnected and transverse 
geodesic laminations, such that $h\colon\mathcal{C}\rightarrow\mathcal{C}$
 is uniquely determined by the isotopy class of $f$ on $M_P$. Thus one can say as definition that 
 $\mathcal{C}_P$ is the set of orbits 
forced by $P$ or that the \textit{dynamics of $h$ on  $\mathcal{C}_P$  is forced by $P$}: we say that 
an (finite or infinite) $h$-orbit $Q$ is \textit{forced} by $P$,  denoted by $P\geqslant_2 Q$,
  if  $Q\subset \mathcal{C}_P$. 
In the general case, if an end periodic map $f$ is irreducible \cite{Fen}, the union 
of the laminations fills $M$ and $f$ is isotopic to  a \textit{pseudo-Anosov}-like 
representative of the dynamics rel to $P$ which preserves a pair of geodesic laminations. 
In our case,  as in the work of Grines \cite{GriRep} for surfaces of finite topology, one has the following  result (theorem \ref{thm:first}).
\begin{theorem}\label{thm:forcing}
Let $f$ be a Smale map with a exteriorly situated basic set $K$  without special 
pairs  rel to an homoclinic orbit $P$. Then $\mathcal{C}_P=K$ up a 
topological finite-to-one semi-conjugacy $\iota:\mathcal{C}_P\rightarrow K$ which is injective on the set of 
non-boundary periodic points. 
\end{theorem} 
Therefore, in the hypothesis of theorem \ref{thm:forcing}, $K$ contains the orbits forced by $P$. 
\subsection{Results of the paper}
This is the context where our work is inserted. An important element of this paper
 is the  pruning theory introduced by de Carvalho in \cite{dC1}  which is a technique 
for eliminating dynamics of a surface homeomorphism. So in section \ref{background}
we state the differentiable pruning theorem (theorem \ref{theo:pruning}), due to 
de Carvalho and the author, 
that can be used to eliminate bigons of a Smale map and hence for finding the dynamical core
associate to a homoclinic orbit. This differentiable version of 
 pruning have been implemented for uncrossing invariant manifolds of a Smale
 diffeomorphism in a region called a \textit{pruning domain}. This point of 
 view was inspired by the work of P. Cvitanovi\'c \cite{Cvi} where a generic 
 one-folding map is interpreted as a \textit{partial horseshoe}, that is, a map 
 whose dynamics forbids or prunes certain horseshoe orbits. 

As an application we will study, in section \ref{sec:horseshoe}, the forcing 
relation of homoclinic orbits coming from the standard Smale  horseshoe $F$ stated in \cite{Sma}. 
In particular, it will be dealt homoclinic orbits to the fixed 
 point $0^\infty$, that is, orbits $P^w_0$ whose code in $\Sigma_2$ is ${}^\infty01{}^1_0w{}^1_0\cdot10^\infty$
  where $w$ is a finite word called \textit{decoration}. In \cite{dCarHallFor} de Carvalho and Hall 
  conjectured that the orbits forced  by $P^w_0$ are those ones that do not
   intersect  a region $\mathcal{P}_w$ called \textit{pruning region}, and that
    the forcing relation of periodic orbits depends basically of being able to determine it for 
homoclinic orbits. In this work we will conclude that the pruning method can be used for finding
 these pruning regions for \textit{certain} infinite families of decorations $w$. This follows
 proving that  after a finite number of applications of the differentiable pruning theorem one 
can eliminate all the  bigons rel those orbits. Thus $\mathcal{P}_w$ is precisely
the union of the pruning domains where the invariant manifolds were uncrossed.
 \begin{theorem}
 If $w$ lies to one of following classes of decorations: maximal, P-list or star, then there 
exists a pruning region $\mathcal{P}_w$ such that
$\mathcal{C}_{P^w_0}=\Sigma_2\setminus \bigcup_{i\in\mathbb{Z}}\sigma^{i}(\mathcal{P}_w)$
 up to topological finite-to-one semi-conjugacy which is injective on the set of 
non-boundary periodic points.
 \end{theorem}
In section \ref{sec:horseshoe} are defined all the concepts needed by theorem above.  
Thus the set of orbits that have to coexist with these homoclinic orbits is described completely.
 Among others results contained in the text, the pruning method allows us to prove a 
 Milnor-Thurston-like forcing on maximal homoclinic orbits (Corollary \ref{cor:maximal}): 
 \begin{corollary}
 If $w$ and $w'$ are maximal finite words satisfying that $w\geqslant_1w'$ and $\widehat{w}\geqslant_1\widehat{w'}$  then  $P^w_0\geqslant_2P^{w'}_0$, where $\geqslant_1$ denotes the 
 unimodal order and $\widehat{w}$ denotes the reverse word of $w$.
 \end{corollary}
A version of this result was proved by  Holmes and Whitley \cite{HolWhi} for homoclinic orbits that appear in strongly dissipative H\'enon maps.

In section \ref{sec:pruningbigons}  we introduce the \textit{pruning method}, that is an 
algorithm which could allow us to find, given a  homoclinic orbit $P$ of an arbitrary 
Smale map $f$, another Smale map $\psi$ without bigons  rel $P$. 
Hence the basic piece of such map $\psi$ has to contain all the dynamics 
forced by $P$. Unfortunately there is an inconvenient in our treatment: 
there is no guarantee that the method always stops in a finite number of steps; but even 
in that case there exists a pruning model, describing the dynamical core, which belongs 
to the isotopy class of a limit of hyperbolic pruning models \cite{MenOrd}.

 Now we would like to explain how the paper is organized. Section \ref{background}
 is devoted to relation between bigons and the forcing relation of 
 homoclinic orbits.  
  Section  \ref{sec:horseshoe} is devoted to the application of the differentiable 
	pruning theory to the study of  homoclinic horseshoe orbits and section \ref{sec:pruningbigons} will introduce the pruning method in a general form. 

\section{Dynamics forced by homoclinic orbits}\label{background}
Here we will define the notions that we are going to use. 
 We assume that the reader has some familiarity with Smale
 maps and pseudoanosov homeomorphisms. Good references
 for these topics are \cite{BonLanJea} and \cite{FatLauPoe}.
\subsection{Smale maps}
Let $f$ be a Smale map on the closed disk $D^2$ and suppose that $f$ has a 
unique non-trivial basic saddle set $K$ which is a totally disconnected
hyperbolic Cantor set. Suppose \textit{the domain of $K$}, 
that is, an invariant open region containing $K$ where the dynamics can be 
explained by the symbol dynamics of $K$ is $\Delta(K)=D^2\setminus\{s_1,s_2,\cdots,s_k\}$
where $s_i$ is a periodic point of $f$ for all $i=1,\cdots,k$. 
See the precise definition of $\Delta(K)$ in \cite{BonLanJea}. 
Thus the non-wandering set of $f$ is formed by $K$ and a finite set
of isolated saddles points, sinks and sources.
We will suppose that $f$ can be extended to $\partial D^2$ as the identity $f=Id$.

A saddle point $\bold{x}\in K$ is a \textit{$s$-boundary point} if $\bold{x}$ is a boundary 
point of $W^u(\bold{x},f)\cap K$, that is, if $\bold{x}$ is an accumulation point only
from one side by points in $W^u(\bold{x})\cap K$, or equivalently, if there exists 
a closed interval $I\subset W^u(\bold{x},f)$ having $\bold{x}$ as end-point such that 
$\operatorname{Int}(I)\cap K=\emptyset$. The set of $s$-boundary points is denoted by
$\partial_s K$. The \textit{$u$-boundary points} are defined similarly;
the set of the $u$-boundary points is denoted by $\partial_u K$. It is known \cite{NewPal} 
that there exists a finite number of periodic saddle points $p^s_1,\cdots,p^s_{n_s}$ and 
$p^u_1,\cdots,p^u_{n_u}$ such that $\partial_sK=\big(\cup_{i=1}^{n_s}W^s(p^s_i)\big)\cap K$ and 
$\partial_uK=\big(\cup_{i=1}^{n_u}W^u(p^u_i)\big)\cap K$.  
\begin{figure}[h]
\centering
\includegraphics[width=55mm,scale=0.6]{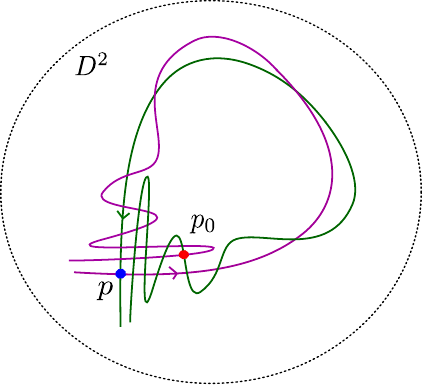}
\caption{A homoclinic orbit $P=\operatorname{Orb}(p_0)$ to a fixed point $p$.}
\label{fig1}
\end{figure}
We are going to study a homoclinic orbit 
$P=\{p_j\}_{j\in\mathbb{Z}}=\{f^j(p_0)\}_{j\in\mathbb{Z}}$ included in the 
intersection of the stable and unstable manifolds of a $s$- and $u$- boundary 
fixed point $p$, and let us suppose that the eigenvalues of $Df(p)$ are positive. 
Figure \ref{fig1} shows an example. The orbit of a point $\bold{x}\in K$ by $f$ will be 
denoted by $R=\operatorname{Orb}(\bold{x})=\{f^{i}(\bold{x})\}_{i\in\mathbb{Z}}$.

We need the following definition.
\begin{definition}\label{def:bigon} \normalfont
A \textit{bigon} is a simply connected open region $\mathcal{I}$ bounded by a segment of 
a stable manifold  $\theta_s\subset W^s(p_s)$ and a segment of an unstable manifold 
 $\theta_u\subset W^u(p_u)$, where $p_s$ and $p_u$ are saddle points of $K$.
\end{definition}
\begin{figure}[h]
\centering
\includegraphics[width=65mm,scale=0.6]{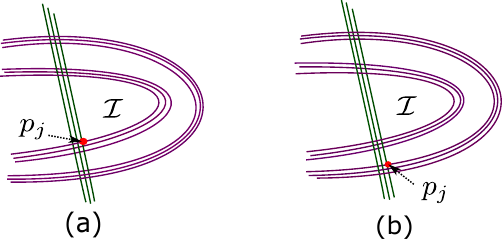}
\caption{Wandering bigons.}
\label{fig:bigonallowed}
\end{figure}
There are two types of bigons. A bigon $\mathcal{I}$ is called \textit{wandering} 
if it is disjoint from $K$. In this case $\partial \mathcal{I}=\theta_s\cup\theta_u$  
with $\theta_s\subset W^s(p_s)$ and $\theta_u\subset W^u(p_u)$, where $p_s$ and 
$p_u$ are boundary periodic points of $K$.  Figure \ref{fig:bigonallowed} shows
two wandering bigons.

The second type of bigons is the following: A bigon $\mathcal{I}$ is called \textit{non-wandering}
if  $\mathcal{I}\cap K\neq\emptyset$. In general, a non-wandering bigon contains
a wandering bigon is its interior. If it is not the case,  there are two possibilities:
$\mathcal{I}$ contains a $s$-boundary periodic saddle point $\bold{x}$ whose free branch of 
$W^u(\bold{x})$ belongs to the basin of an attracting periodic orbit, or  $\mathcal{I}$ contains 
a $u$-boundary saddle point $\bold{x}$ whose free stable manifold belongs to unstable set 
of a repelling periodic point. See figure \ref{fig:1prong}. 
\begin{figure}[h]
\centering
\includegraphics[width=95mm,scale=0.6]{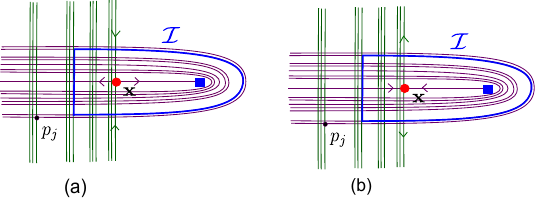}
\caption{Non-wandering bigons.}
\label{fig:1prong}
\end{figure}

If $f$ has no bigons relative to a homoclinic orbit $P$ then there are not 
non-wandering bigons and every bigon is as in figure \ref{fig:bigonallowed}(a) 
where $p_j$ represents an element of the homoclinic orbit, that is, there exists a $p_j\in P$ 
such that $\{p_j\}\subset \theta_s\cap\theta_u$. 
In this case we say that $K$ is \textit{exteriorly situated}  rel $P$.

If $P$ is a homoclinic orbit to a fixed point $p$, the following proposition 
proves that $f$ restricted to $K$ will be transitive providing that it does not 
have bigons.
\begin{proposition}\label{prop:transitive}
Let $P$ be a homoclinic orbit to a fixed point $p$ of a Smale map $f$ on $D^2$. 
If $f$ does not have bigons rel $P$ then $f$ is transitive on its basic set $K$.
\end{proposition}
\begin{proof}
Suppose that there exist at least two transitive disjoint basic sets 
$K_1$ and $K_2$ such that $\{p\}\cup P\subset K_1$. If $W^s(K_1)\cap W^u(K_2)\neq\emptyset$ 
then the elements $p_j$ of $P$ have to be situated in $W^s(K_1)\cap W^u(K_2)$, because
otherwise there would exist bigons rel to $P$. It is a contradiction since, in that case,
$\lim f^{-n}(p_j)$ goes to $p\in K_1\cap K_2$ when $n$ goes to $\infty$, 
which is clearly a contradiction. Hence $W^s(K_1)\cap W^u(K_2)=\emptyset$.
Similarly we can prove that $W^u(K_1)\cap W^s(K_2)=\emptyset$.  
Hence $K_1$ and $K_2$ are not homoclinically related. Thus one can suppose
that $P\subset K_1$ and $P\cap K_2=\emptyset$. Since $D^2$ is simply connected, it follows
that any homoclinic intersection happening in $K_2$ creates wandering and non-wandering
 bigons rel to $P$. It is a contradiction with the hypothesis. Hence $K_2=\emptyset$.
\end{proof}

\begin{definition}\label{def:special}
A pair of periodic points $\bold{x},\bold{y}\in K$ is \textit{$u$-special} if 
$W^u(\bold{x})\cup W^u(\bold{y})$ is accessible from inside boundary of a domain 
that is a continuous immersion of the open disk in $D^2$ and belongs to $D^2\setminus K$.
In an analogous manner, a pair of periodic points $\bold{x},\bold{y}\in K$ is 
\textit{$s$-special} if $W^s(\bold{x})\cup W^s(\bold{y})$ is accessible from inside boundary 
of a domain  that is a continuous immersion of the open disk in $D^2$ and belongs to $D^2\setminus K$.
\end{definition}

\begin{figure}[h]
\centering
\includegraphics[width=75mm,scale=0.7]{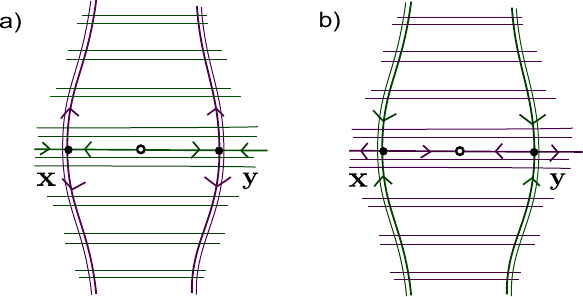}
\caption{In (a) is pictured a pair of $u$-special points and in (b) is pictured a pair 
of $s$-special points.}
\label{fig:special}
\end{figure}
See figure \ref{fig:special} for examples. 
\begin{proposition}
Let $K$ be an exteriorly situated basic set of a Smale map $f$ on $M_P$. Then $f$ is isotopic to 
a Smale map $f^*$ on a basic set $K^*$ by a semiconjugacy 
$\tau:K\rightarrow K^*$ such that 
\begin{itemize}
\item[(1)] $\tau(K)=K^*$, $\tau \circ f=f^*\circ\tau$;
\item[(2)] the set $Z\subset K^*$ of points $z$ whose preimage $\tau^{-1}(z)$ 
contains more than one point is such that $h^{-1}(Z)=K\cap (\cup W^u(p_i^u) \bigcup\cup W^s(p_j^s))$  where $\{p_i^u\}$ 
is the finite set of $u$-special periodic points and $\{p_j^s\}$ is the finite set of the $s$-special periodic points;
\item[(3)] $K^*$ does not have special pairs of points.
\end{itemize} 
\end{proposition}
\begin{proof}
Note that there exist a finite number of pair of periodic special points
 $\{\bold{x}_i,\bold{y}_i\}_{i=1}^k$. Suppose that $\bold{x}$ and $\bold{y}$ be a $u$-special 
 pair of boundary periodic points and let $B$ be the region between $W^u(\bold{x})$
 and $W^u(\bold{y})$. Then \textit{zipping} $B$  by a semi-conjugacy $\tau$ which identifies 
 points $r_1\in W^u(\bold{x})$ and $r_2\in W^u(\bold{y})$, that can be joined by 
 a stable leaf included in $B$, one obtains a Smale map isotopic to $f$ whose dynamics 
 is semiconjugated to  $f|_K$ and has $k-1$ special pairs of points. Applying the same process to this new map one 
 can construct a Smale map with $k-2$ special pairs of points. By induction 
 we find a Smale map $f^*$ satisfying the mentioned conclusions.
\end{proof}
\subsection{The dynamics forced by a homoclinic orbit}
In this section we will study a forcing relation on homoclinic orbits to a fixed point.
As we have said in the introduction, in this case one must work with 
the Handel-Miller theory which is a generalization of the Nielsen-Thurston theory
applied to end periodic homeomorphisms \cite{HanMil}.

Let $P=\{p_j:p_j=f^j(p_0),\forall j\in\mathbb{Z}\}$ be a homoclinic orbit to a 
fixed point $p$ of a Smale map $f$ on $D^2$.  As said before let 
$M_P=\operatorname{Int}(D^2)\setminus P$
and let $f_P$ be the restriction of $f$ to $M_P$. 
Next lemma proves that $f_P$ has the topological type of an end-periodic 
automorphism of the noncompact surface with one attracting end and one repelling end.
\begin{lemma}\label{lem:end}
The isotopy class of $f_P$ contains a map which is conjugated to an end-periodic 
homeomorphism  $\overline{f}$ on the non-compact hyperbolic surface 
$S=\mathbb{R}^2\setminus({\mathbb{Z}\times\{0\}})$.
 The map $\overline{f}$ has one attracting end and one repelling end.
\end{lemma}
\begin{proof}
In fact, if splitting open the stable manifold of $p$ by a stable DA-isotopy 
(which was introduced in \cite{Will} and is the inverse process of zipping) 
one obtain a map $f_0$ for which $p$ is an attracting point and such that $f_0$ 
has two saddles points $\bold{x}_1$ and $\bold{x_2}$. See figure \ref{fig:dampas} for 
an example with the horseshoe orbit ${}^{\infty}01010110{}^{\infty}$ homoclinic to 
the point $p=0^{\infty}$.
The stable DA isotopy transforms $P$ into a heteroclinic orbit $Q=\{q_j\}$,
 that is, $\lim_{n\rightarrow +\infty}f_0^n(q_i)=\bold{x}_1$
and $\lim_{n\rightarrow +\infty}f_0^{-n}(q_i)=\bold{x}_2$. See figure \ref{fig:dampas}(b).
\begin{figure}[h]
\centering
\includegraphics[width=110mm,scale=0.6]{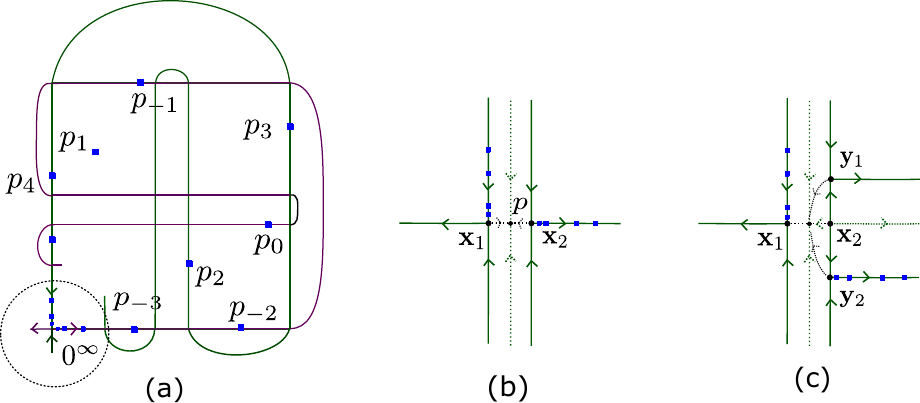}
\caption{DA-isotopies of $f$ and their action around the fixed point $p$.}
\label{fig:dampas}
\end{figure}
Then making an unstable DA isotopy of $f_0$  one get a Smale map $f_1$ for which 
$\bold{x}_2$ is now an repelling point. Furthermore $f_1$ has two saddle fixed points
denote by $\bold{y}_1$ and $\bold{y}_2$. See figure \ref{fig:dampas}(c).
 Thus $Q$ is now transformed in a heteroclinic orbit, that we will also called $Q$, 
satisfying that $\lim_{n\rightarrow +\infty}f_1^n(q_i)=\bold{x}_1$
and $\lim_{n\rightarrow +\infty}f_1^{-n}(q_i)=\bold{y}_2$. By a new isotopy one 
can push $\bold{x}_1$ and $p$ to a point $\bold{t}_1$ in $\partial D^2$, and one can push 
 $\bold{y}_2$ and $\bold{x}_2$ to point $\bold{t}_2\in\partial D^2$, to obtain a 
 map $\overline{f}$ on the disk which has an orbit $Q$ such that 
$\lim_{n\rightarrow +\infty}f_1^n(q_i)=\bold{t}_1$
and $\lim_{n\rightarrow +\infty}f_1^{-n}(q_i)=\bold{t}_2$. Identifying $\operatorname{Int}(D^2)$ with $\mathbb{R}^2$ 
and conjugating so that $Q$ is identified with $\mathbb{Z}\times\{0\}$, 
we obtain a map also called $\overline{f}$ that has a basic compact basic set and
two ends. The surface $S=\mathbb{R}^2\setminus(\mathbb{Z}\times\{0\})$ is hyperbolic  as was shown in \cite[Section 3]{Han99}.
% is isotopic to an end-periodic map $\overline{f}$ on 
%$D^2\setminus Q$ having with two ends. The surface $D^2\setminus Q$ is 
%strip with punctures in points of $Q$ approaching both ends: $\bold{t}_1$  is an attracting 
%end and $\bold{t}_2$ is a repelling end. 
See example \ref{ex:horseshoe}.
\end{proof}
\begin{example} 
In figure \ref{fig:laminaexam}  we have pictured the invariant manifolds of the map 
$\overline{f}$, given by lemma \ref{lem:end}, for the horseshoe homoclinic orbit 
$p_0={}^\infty010\cdot10110^\infty$ which has been represented in figure \ref{fig:dampas}(a).
See section \ref{sec:horseshoedefi} for the description of the Smale horseshoe map.
\begin{figure}[h]
\centering
\includegraphics[width=120mm,scale=0.6]{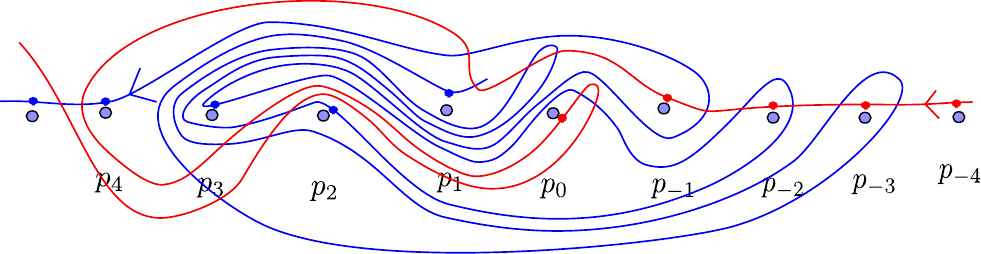}
\caption{Invariant manifolds of the end periodic Smale diffeomorphism $\overline{f}$ constructed
applying lemma \ref{lem:end} to the horseshoe homoclinic orbit ${}^\infty010\cdot10110^\infty$.}
\label{fig:laminaexam}
\end{figure}
\end{example}
We will follow the presentation of the Handel-Miller theory given by Cantwell and Conlon 
in \cite{CanCon2} for end-periodic homeomorphisms, in particular, by lemma \ref{lem:end}, 
we can suppose that  $f_P$  is defined on $S$, has a basic set $K$ and $P=\mathbb{Z}\times\{0\}$. 
Let $\widetilde{S}$ be the universal cover of $S$ which is identified with 
the Poincar\'e disk $\mathbb{D}^2$, $\pi:\widetilde{S}\rightarrow S$ be the recovering map, 
$\widetilde{f}:\widetilde{S}\rightarrow\widetilde{S}$ be a fixed lift of $f$
and $\widehat{f}:S_{\infty}\rightarrow S_{\infty}$ its canonical extension to the
closed unit disk $S_{\infty}:=\mathbb{D}\cup\partial\mathbb{D}$. 
First Cantwell and Conlon have defined the Handel-Miller map $h:S\rightarrow S$ 
isotopic to $f$ such that $h=f$  in the boundary $S^1_{\infty}=\partial \mathbb{D}$. Then they 
extended it using a very interesting construction and were able to find two 
bi-laminations $\Gamma_{\pm}$ and $\Lambda_{\pm}$ on $S$ which are 
invariant by $h$. It follows  that $\Lambda_{\pm}$ is formed by immersed geodesics 
which are complete and whose end-points of their lifts are in $S_{\infty}^1$. 
Denotes the support of $\Lambda_{\pm}$ by $|\Lambda_{\pm}|$.

\begin{definition} \normalfont
The \textit{dynamical core} $\mathcal{C}$ of $f$ is the intersection of the invariant laminations 
$|\Lambda_+|\cap|\Lambda_-|$ of the Handel-Miller map  $h$ associated to $f$.
\end{definition}
The homeomorphism $h$ is not uniquely determined  but  it is unique when it is restricted to the 
core.
\begin{theorem}{\cite[Corollary 10.13]{CanCon2}}\label{thm:core}
The dynamical core $\mathcal{C}$ of $f$ is  uniquely determined up to a topological conjugacy. 
\end{theorem}
The dynamical core relative to a homoclinic orbit $P$, denoted $\mathcal{C}_P$, is 
the dynamical core of $f$ on $S$.  
Since $f$ and $g$ on $S$ are isotopic if and only there are lifts to $\widetilde{S}$
such that $\widehat{f}$ and $\widehat{g}$ agree on the ideal boundary \cite{CanCon} 
it follows that $\mathcal{C}_P$ is a topological invariant under ambient isotopies. 
So $\mathcal{C}_P$ is persistent which means that every $g$ ambiently isotopic to $f$ 
on $S$ has the same core.
Thus one can give the following definition of forcing in this context.
\begin{definition} \normalfont
We will say that the \textit{dynamics forced by $P$} is the 
dynamics of $h:\mathcal{C}_P\rightarrow\mathcal{C}_P$, that is, an $h$-orbit $Q$ is \textit{forced} by $P$, denoted $P\geqslant_2Q$,  if $Q\subset\mathcal{C}_P$.
\end{definition}
%Thus the dynamics forced by $P$ is given by the dynamics of $h$ on $\mathcal{C}_P$. 
Note that this definition involves periodic and non-periodic orbits, indeed, it follows
from \cite[theorem 9.2]{CanCon2} that $h|_{\mathcal{C}_P}$ is  conjugated to a two-ended Markov shift
of finite type.

If a Smale map $f$ does not have  bigons relative to $P$, theorem \ref{thm:first} below proves that 
$\mathcal{C}_P$ agrees with its basic set $K$ up to topological finite-to-one semi-conjugacy. 
It depends of the following result of Grines \cite{GriRep}, extended to a surface 
of infinite type, who has proved analogous conclusions for hyperbolic attractors 
on surfaces of finite type.
\begin{theorem}{\cite{GriRep}}
Let $f$  be a Smale map on $D^2$ with a basic set $K$, and let $P$ be a 
homoclinic orbit to a fixed point $p\in K$.  If $K$ is exteriorly situated 
relative to $P$ without special points then there exist two 
geodesics laminations $(\mathcal{L}^u,\mathcal{L}^s)$, a map $f_0$ on 
$\mathcal{L}^u\cap\mathcal{L}^s$ and a continuous map 
$\iota:\mathcal{L}^u\cap\mathcal{L}^s\rightarrow K$ which is homotopic to the identity such that
\begin{itemize}
\item[(1)]  $\iota(\mathcal{L}^u\cap\mathcal{L}^s)=K$, 
$f\circ \iota|_{\mathcal{L}^u\cap\mathcal{L}^s }=\iota\circ f_0|_{\mathcal{L}^u\cap\mathcal{L}^s}$;
\item[(2)]  the set $B\subset K$ of points $b$ whose preimage $\iota^{-1}(b)$ contains more than 
one point is formed by $K\cap (\cup W^s(p_i^u))$ where $\{p_i^u\}$ is the finite set of the 
$u$-boundary periodic points and $K\cap (\cup W^u(p_i^s))$ where $\{p_i^s\}$ is the finite set of the 
$s$-boundary periodic points;
\item[(3)]  the set $\iota^{-1}(b)$  consists of exactly two points belonging to distint 
boundary geodesics of $\mathcal{L}^u$ or $\mathcal{L}^s$.
\end{itemize}
\end{theorem}
\begin{proof}
It is sufficient to prove the conclusions for the map $\overline{f}:S\rightarrow S$ 
given in lemma \ref{lem:end}.  Hence we can suppose that $f$ is an
end periodic map with a basic saddle set $K$ and two ends, defined 
 on the end periodic surface $S$. 
\begin{figure}[h]
\centering
\includegraphics[width=95mm,scale=0.6]{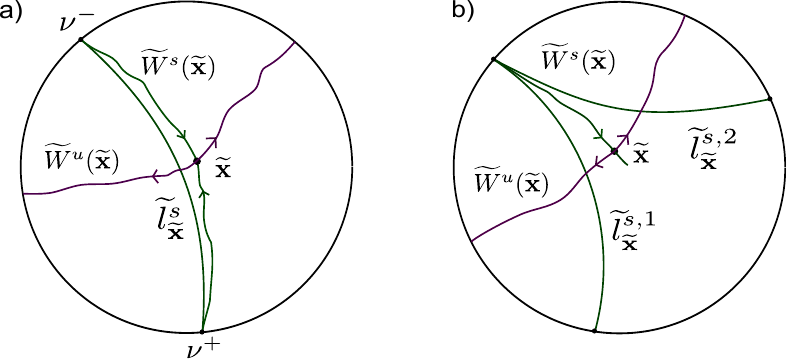}
\caption{The construction of the geodesics $l^{s}_{\bold{x}}$.}
\label{fig:geodesics}
\end{figure}
Using the construction of Grines \cite{GriRep}, for each $\bold{x}\in K$ we have that 
\begin{itemize}
\item if both components of $W^s(\bold{x})\setminus\{\bold{x}\}$ intersect $K$ 
then $W^s(\bold{x})$ lifts in the 
Poincar\'e disk $\widetilde{S}=\mathbb{D}^2$ to a curve $\widetilde{W}^s(\widetilde{\bold{x}})$ 
which has two-endpoints at $S^1_{\infty}$, $\nu^+$ and $\nu^-$. Taking the geodesic tightening 
of $\widetilde{W}^s(\widetilde{\bold{x}})$,  we construct a geodesic 
$\widetilde{l}^s_{\widetilde{\bold{x}}}$, that is, the geodesic passing trough $\nu^+$ and $\nu^-$.
See figure \ref{fig:geodesics}(a). 
%In this case we define $\iota(l^s_{\bold{x}})=W^s(\bold{x})$. 
\end{itemize}
We set $\widetilde{L}^s$ to the set of 
$\{\widetilde{l}^s_{\widetilde{\bold{x}}}:\bold{x} \textrm{ is a non-boundary periodic point}\}$ 
and $L^s=\pi(\widetilde{L}^s)$. Denote $\mathcal{L}^s$ to the closure of $L^s$ on $S$. 
Note that if $\bold{x}$ is an $u$-boundary periodic point but not 
$s$-boundary then, taking limits of geodesics from both sides of $W^u(\bold{x})$, 
there exists two geodesics $l^{s,1}_{\bold{x}}$ and $l^{s,2}_{\bold{x}}$ 
in $\mathcal{L}^s$, as in figure \ref{fig:geodesics}(b), that can be considered
the geodesic tightening of $W^s(\bold{x})$. 
See \cite[Chapter 9]{GriMedPoc} for a detailed explanation. 
%In this case we define $\iota(l^{s,1}_{\bold{x}})=\iota(l^{s,2}_{\bold{x}})=W^s(\bold{x})$.

Analogously we state the geodesic lamination $\mathcal{L}^u$. Defining the maps
$\iota$ and $f_0$  by $\iota(l^s_{\bold{x}}\cap l^u_{\bold{x}})=\bold{x}$ and 
$f_0(l^s_{\bold{x}}\cap l^u_{\bold{x}})=l^s_{f(\bold{x})}\cap l^u_{f(\bold{x})}$ 
as in \cite{AraGriEnt}, one can prove that 
$$\iota\circ f_0(l^s_{\bold{x}}\cap l^u_{\bold{x}})=f(\bold{x})=f\circ \iota(l^s_{\bold{x}}\cap l^u_{\bold{x}}).$$
\end{proof} 
Now we will prove the main result of this section.
\begin{theorem}\label{thm:first}
Let $f$  be a Smale map on $D^2$ with a basic set $K$, and let $P$ be a homoclinic 
orbit to a fixed point $p\in K$.  If $K$ is exteriorly situated relative to $P$ 
without special points then $\mathcal{C}_P=K$ up a topological 
finite-to-one semi-conjugacy $\iota:\mathcal{C}_P\rightarrow K$ which is injective on the 
set of non-boundary points.
 \end{theorem}
 \begin{proof}[Proof of Theorem \ref{thm:first}]
We will prove that $\Lambda_+=\mathcal{L}^u$ and $\Lambda_-=\mathcal{L}^s$,
that is, $\mathcal{C}_P=\mathcal{L}^u\cap\mathcal{L}^s$.
It is sufficient to prove that $(W^u(K),W^s(K))$  is a bilamination satisfying the axioms 1--4
given in \cite[Section 10]{CanCon2}. Then by the isotopy theorem \cite[Theorem 10.14]{CanCon2}, 
$(\mathcal{L}^u,\mathcal{L}^s)=(g(W^u(K)),g(W^s(K)))$ is the Handel-Miller lamination, 
where $g(\lambda)=\lambda^g$ is the geodesic tightening of $\lambda$ for any unstable or 
stable leaf as it was defined in the previous paragraph. Thus $K=\mathcal{C}_P$ up to an 
ambient isotopy and a finite-to-one semiconjugacy. 
\begin{itemize}
\item[(i)] We will follow the same argument of the proof of \cite[Theorem 5.3]{LewUre} 
by Lewowicz and Ures. Take a simply closed curve 
$\gamma=\gamma_s\cup\gamma_u$  which is bounded by a stable segment and an
unstable segment. Now let $\bold{x}$ be a non-boundary fixed point of $f^n$ which belongs to $K$ 
and let $\widetilde{\bold{x}}$ be one of its lifts. 
Since $K$ does not have bigons then $W^u(\widetilde{\bold{x}},\widetilde{f})$ intersects 
every lift of $\gamma_s$  at most a point.  
Since $W^u(\bold{x},f)$ intersects infinitely many times $\gamma_s$ then
$W^u(\widetilde{\bold{x}},\widetilde{f})$ has exactly two end-points at the boundary
  $S^1_\infty$ of the 
universal cover of $M$. It proves that $(W^u(K),W^s(K))$ satisfies Axioms 1 and 2.
\item[(ii)] By lemma \ref{lem:end} there exist neighbourhoods $U_+$ and $U_-$ of 
the ends $\bold{t}_1$  and $\bold{t}_2$, such that the sets $J_+=\partial U_+$ and 
$J_-=\partial U_-$ satisfy: (1) $W^u(K)\cup J_-$ and $W^s(K)\cup J_+$ are each sets 
of disjoint pseudo-geodesics, (2) $W^u(K)$ is transverse to $J_+$ and 
$W^s(K)$ is transverse to $J_-$, and (3) no leaf of $W^u(K)$ can meet $J_+$ so as to form
bigon and no leaf of $W^s(K)$ can meet $J_-$ so as to form a bigon.
See example \ref{ex:horseshoe}. Then $(W^u(K),W^s(K))$
satisfies Axiom 4.
\item[(iii)] We will prove that, for any $\gamma_s\subset W^s(K)$, $\gamma_s$ is included 
in the  closure of the 
set of non-escaping component of the positive juncture $J_+$. Suppose that 
$\gamma_s=W^s(\bold{x})$ where $\bold{x}$ is $n$-periodic. Then, by item (ii), $W^u(\bold{x})$ intersects
$J_+$. Thus there exists a segment $\gamma_u\subset W^u(\bold{x})$ with one end-point
 in $\bold{z}\in J_+$ and the other in $\bold{x}$. Since $\lim_{i\rightarrow+\infty}\operatorname{diam}(f^{-in}(\gamma_u))=0$, by the Lambda lemma, given an open set $U$ containing a compact segment 
 of $\gamma_s$, a segment $\alpha\subset J_+$  containing $\bold{z}$ and $\epsilon>0$,
 there exists an $i_0$ such that for all $i\geq i_0$,  $f^{-in}(\alpha)\cap U$
 is $\epsilon$-close to  $\gamma_s\cap U$. Hence $\gamma_s$ is at the closure of the 
 non-escaping components of $J_+$. Since periodic points are dense in $K$, it 
 follows that, for all $\bold{x}\in K$, $\gamma_s=W^s(\bold{x})$ is at the closure of the 
 non-escaping components of $J_+$ as well. 
 Thus $(W^u(K),W^s(K))$ satisfies Axiom 3.
\end{itemize}
\end{proof}
%For $\bold{x}\in K$, define $\iota(\bold{x})=\iota(W^s(\bold{x}))\cap\iota(W^u(\bold{x}))$. 
%  If $\bold{x}$  is a $s$-boundary periodic point of $K$ then its $\iota(\bold{x})$ is the end-point
%  of a semi-isolated leaf of  $\Lambda_{+}$ that is, a leaf that is approached by points
%  of $\Lambda_+$ on at most one side. 
%  In the same way, $u$-boundary points of $K$ correspond to semi-isolated leaves of $\Lambda_-$.

 Thus theorem \ref{thm:first} can be compared with a result by Lewowicz and Ures \cite{LewUre} that 
 proves that the  basic set of a Smale map on a surface of finite topology is included in the 
persistent set given by Handel \cite{Han}, if $K$ is \textit{exteriorly situated},
that is, if there are no bigons.

\begin{example}\label{ex:horseshoe}
Consider the homoclinic orbit  $\Theta$ which corresponds to the 
horseshoe orbit  $p_0={}^\infty010\cdot10^\infty$ which
is represented in figure \ref{fig:tight}(a).
Note that $F$ does not have bigons relative 
to $\Theta$ then, applying theorem \ref{thm:first}, the dynamics forced by this orbit
is the full horseshoe up to a finite-to-one- semiconjugacy.  Note that 
$\overline{f}$  has an infinite orbit of a 1-pronged singularity at the 
elements of $\Theta$. Applying the zero-entropy equivalence relation, stated in
\cite{dCExt}, we obtain the \textit{tight} horseshoe defined on  $\mathbb{S}^2$ which 
is a generalized pseudo-Anosov map $\phi$  having an infinite orbit of  $1$-pronged singularities.
See figure \ref{fig:tight}(b). 
\begin{figure}[h]
\centering
\includegraphics[width=105mm,scale=0.6]{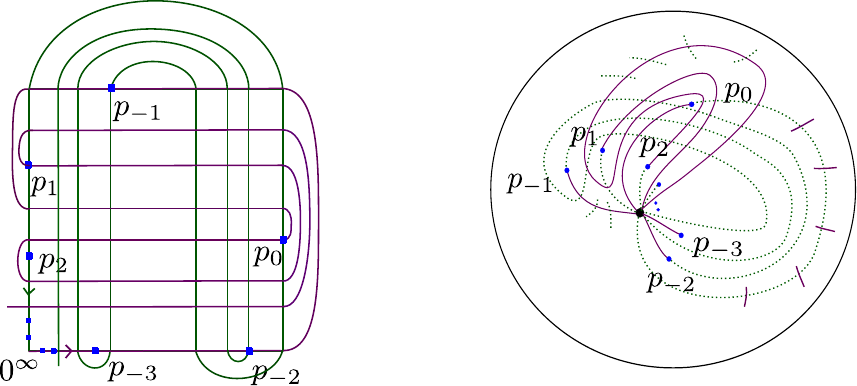}
\caption{The tight horseshoe in the sphere $\mathbb{S}^2$.}
\label{fig:tight}
\end{figure}

The laminations of the end periodic map associated to this homoclinic orbit are
given in figure \ref{fig:lamina}. In this case the map $\overline{f}$ of lemma \ref{lem:end} lies to the isotopy class of $\sigma_{i}^{-2}\tau$
where $\sigma_i$ is a map that interchanges the points $p_i$ and $p_{i+1}$ and $\tau$ is
the translation end map.
\begin{figure}[h]
\centering
\includegraphics[width=120mm,scale=0.6]{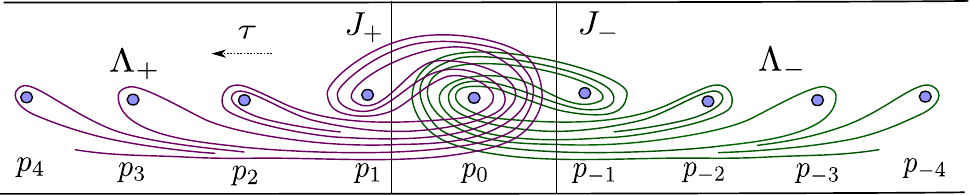}
\caption{The laminations of the end periodic map associated to 
the horseshoe homoclinic orbit ${}^\infty010\cdot10^\infty$.}
\label{fig:lamina}
\end{figure}
\end{example}

\subsection{Pruning theory} 
Pruning is a technique introduced by A. de Carvalho \cite{dC1} for eliminating orbits 
of a homeomorphism in a controlled manner, that is, for destroying dynamics contained 
in the interior of simply connected closed regions that we can define dynamically. 
Here we will use the differentiable version of pruning that the author, in a joint 
work with A. de Carvalho, has developed in the forthcoming paper 
\textit{Differentiable pruning and the hyperbolic pruning front conjecture} \cite{dCarMen}.
The main ideas are the following.

Let $f$ be an orientation-preserving Smale diffeomorphism on a surface $S$ with a basic set $K$.
Let $D$ be a simply connected domain bounded by two segments $\theta_s$  and 
$\theta_u$ with  $\theta_s\subset W^s(p_s)$ and $\theta_u\subset W^u(p_u)$
where $p_s$ and $p_u$ are saddle periodic points in $K$ with periods $n_s$ and $n_u$, respectively. 

\begin{definition}\label{def:pruningconditions} \normalfont
The domain $D$ is a \textit{pruning domain} if its boundary satisfies the
 following properties
 \begin{itemize}
 \item[(i)] $f^n(\theta_s)\cap \operatorname{Int}(D)=\emptyset, \textrm{ for all }n\ge1.$
 \item[(ii)] $f^{-n}(\theta_u)\cap\operatorname{Int}(D)=\emptyset, \textrm{ for all }n\ge1.$
 \end{itemize}
\end{definition}
An easy consequence of definition \ref{def:pruningconditions} is next lemma:
\begin{lemma}\label{lem:boundary}
Let $D$ be a pruning domain. Then the curves $\theta_s$ and $\theta_u$ satisfy:
\begin{itemize}
\item[(i)] Either $p_s\in \theta_s$ or  $F^{n}(\theta_s)\cap \mathring{\theta_s}=\emptyset$, for all $n>0$.
\item[(ii)] Either $p_u\in \theta_u$ or $F^{-n}(\theta_u)\cap \mathring{\theta_u}=\emptyset$, for all $n>0$.
\end{itemize}
\end{lemma}
The differentiable version of the pruning theorem is the following result.
\begin{theorem}[Differentiable Pruning Theorem]\label{theo:pruning}
If $D$ is a pruning domain for $f$ then there exists a diffeomorphism
 $\psi$, isotopic to $f$, satisfying the following properties:
\begin{itemize}
\item[(i)] $\psi$ is a Smale map for which every point of $\operatorname{int}(D)$ is wandering for $\psi$,;
\item[(ii)] the non-wandering set of $\psi$ consists of  
a saddle set  $K_{\psi}$ and a finite set of isolated saddle points,
sinks and sources;   
\item[(iii)] $\psi$, restricted to  $K_{\psi}$, is  semiconjugated  to $f$, 
restricted to a subset $K'\subset K$ by a (at most $4$-to-$1$) semiconjugacy $\zeta:K_{\psi}\rightarrow K'$ satisfying
 $$K'=\{\bold{x}\in K:\operatorname{Orb}(\bold{x},f)\cap\operatorname{Int}(D)=\emptyset\}=K\setminus\bigcup_{i\in\mathbb{Z}} f^i(\operatorname{Int}(D)); \textrm{ and }$$ 
 \item[(iv)] $\zeta$ is injective on the set of non-boundary periodic points.
\end{itemize}
\end{theorem}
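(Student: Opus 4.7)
The plan is to realise $\psi$ as $\psi = h_1 \circ f$, where $\{h_t\}_{t \in [0,1]}$ is a smooth isotopy from $h_0 = \mathrm{id}$ to a diffeomorphism $h_1$ supported in a thin neighbourhood $N$ of $\overline{D}$, designed so that $h_1$ uncrosses the boundary arcs $\theta_s$ and $\theta_u$. The pruning conditions $f^n(\theta_s) \cap \mathrm{Int}(D) = \emptyset$ and $f^{-n}(\theta_u) \cap \mathrm{Int}(D) = \emptyset$ for all $n \geq 1$ guarantee that the forward orbit of $\theta_s$ and the backward orbit of $\theta_u$ stay out of $\mathrm{Int}(D)$; this is what makes the local modification inside $D$ coherent with the global dynamics, and it allows $N$ to be shrunk so that for $n \neq 0$ the intersection $f^n(N) \cap N$ is concentrated near the corner points of $\partial D$ (which lie in $K$ on invariant manifolds of $p_s, p_u$).

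To establish (i) and (ii), I would verify hyperbolicity separately outside and inside the orbit of $N$. Outside $\bigcup_{n \in \mathbb{Z}} \psi^n(N)$ the map $\psi$ coincides with $f$, so hyperbolicity is inherited. Inside $N$ the uncrossing is modelled locally by an isotopy that pulls the two branches of the invariant manifolds bounding $D$ apart, creating one or two new isolated periodic orbits (a sink and/or a source) inside what was $\mathrm{Int}(D)$, while a hyperbolic saddle set persists on the complement. Standard $C^1$-persistence arguments for hyperbolic sets, applied to the modified region and its iterates, then give Axiom A for $\psi$ and the announced decomposition of the non-wandering set into $K_\psi$ together with finitely many new sinks, sources and isolated saddles (the latter arising from $\partial D \cap K$).

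For (iii) and (iv), I would choose a Markov partition $\mathcal{R}$ of $K$, refined if necessary so that $\overline{D}$ lies in a single rectangle. The orbits in $K$ that avoid $\mathrm{Int}(D)$ then form a subsystem coded by a subshift of finite type $\Sigma_{A'} \subset \Sigma_A$, obtained by removing finitely many admissible transitions associated to $D$. Pulling $\mathcal{R}$ through the isotopy yields a Markov partition for $\psi$ whose associated subshift is also $\Sigma_{A'}$, up to finitely many identifications along the portions of $\partial \mathcal{R}$ meeting $\partial D$. The semiconjugacy $\tau : K_\psi \to K'$ is the composition of the coding map for $\psi$ with the inverse coding for $f|_{K'}$; it is finite-to-one because only finitely many symbolic sequences code any given point, and injective on periodic orbits because each periodic itinerary determines a unique periodic point in each model, while the identifications occur only along non-periodic boundary leaves.

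The main obstacle is the local construction of $h_t$ near the corner points of $\partial D$: one must deform $f$ enough inside $D$ to uncross the manifolds while preserving both the hyperbolic splitting at $p_s, p_u$ and the transversality of the stable and unstable foliations in $N \setminus D$. A secondary technical point is ruling out spurious recurrent orbits inside $N$ after the modification and checking that only the expected sinks, sources and isolated saddles appear; this relies on a careful choice of local model and on the fact, guaranteed by the pruning conditions, that the iterates of $\theta_s$ and $\theta_u$ do not re-enter $\mathrm{Int}(D)$ and thus cannot create hidden recurrence inside the pruned region.
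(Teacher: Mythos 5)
Your overall strategy---realising $\psi$ as $h_1\circ f$ with $h_1$ the endpoint of an isotopy supported near $D$ that uncrosses $\theta_s$ and $\theta_u$---is the same as the paper's, which sets $\psi=S_1\circ f$ for a diffeotopy $S_t$ supported in $D$ (and defers the detailed verification to a forthcoming paper). However, two of the steps you supply would fail as stated. The claim that $N$ can be shrunk so that $f^n(N)\cap N$ is concentrated near the corner points of $\partial D$ for $n\neq 0$ is false in exactly the situations the theorem is designed for: the pruning conditions constrain only the boundary arcs $\theta_s,\theta_u$, while $\operatorname{Int}(D)$ typically contains an uncountable piece of $K$, including periodic orbits, which return to $\operatorname{Int}(D)$ infinitely often. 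Hence $f^n(D)\cap\operatorname{Int}(D)\neq\emptyset$ for infinitely many $n$ no matter how thin $N$ is, and your ``hyperbolicity outside $N$ / hyperbolicity inside $N$'' dichotomy collapses: a $\psi$-orbit can re-enter the support of the modification arbitrarily many times, and the real content of the theorem is to show that all such orbits become wandering (this is what conclusion (iii) asserts about $\operatorname{Int}(D)$). Your proposal assumes this difficulty away rather than addressing it. Relatedly, you omit the preliminary DA-type surgery (splitting open $W^s(p_s)$ and $W^u(p_u)$ in the style of Williams) that the paper explicitly invokes; without first making $\theta_s$ and $\theta_u$ free boundary leaves one cannot pull the foliations apart inside $D$ while retaining an Axiom A structure, and it is this surgery, not the uncrossing itself, that accounts for the new sinks, sources and isolated saddles in (ii).

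For (iii)--(iv), your Markov-partition argument rests on the claim that $\{R\subset K:R\cap\operatorname{Int}(D)=\emptyset\}$ is a subshift of finite type obtained by deleting finitely many transitions. In general one cannot refine a Markov partition so that $\overline{D}$ becomes a single rectangle (the endpoints of $\theta_s$ and $\theta_u$ need not lie on partition boundaries), and the set of itineraries avoiding $\operatorname{Int}(D)$ need not be of finite type: in the paper's own horseshoe applications it is the shift-invariant set of sequences avoiding a symbolic rectangle $\mathcal{P}_w$, which is generally not an SFT. So the coding map you propose to invert is not available in the form you describe; the semiconjugacy $\tau$ has to be read off from the isotopy itself, by tracking how the invariant manifolds of $\psi$ inside $D$ arise as deformations $S_1$ of those of $f$ (the paper's observation (I)), rather than from a symbolic identification.
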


\begin{proof}[Sketch of the proof of theorem \ref{theo:pruning}]
The ideia for proving theorem \ref{theo:pruning} is to substitute $f$ by a Smale map
$g_1$ which can be pruned. It is done, if necessary, constructing DA-maps
diffeotopic to  $f$ which are difeomorphisms similar to those ones constructed by 
Williams \cite{Will} for Anosov diffeomorphisms. Thus 
\begin{itemize}
\item we have to splitting open the stable manifold of $p_s$  by a stable DA isotopy to obtain a map 
$g_0$ for which $p_s$ is an attracting periodic point,
\item then to splitting open the unstable manifold of 
$p_u$ by $g_0$, $W^u(g_0,p_u)$, by a unstable DA isotopy to obtain a Smale map $g_1$ isotopic to $f$
for which $p_u$ is a repelling periodic point. 
\end{itemize}
The construction of such attracting and repelling DA isotopies, around $p_s$ and $p_u$ respectively, 
were defined in \cite[Section 7]{BegBonYu} and \cite[Section 2.2.2]{GhrHolSul} in a general setting
using DA-bifurcations.
Thus $g_1$ has a $n_s$-periodic basin of attraction $BA(p_s)$ and a $n_u$-periodic basin of repulsion $BU(p_u)$. 
See figures \ref{fig:deformations}(a) and \ref{fig:deformations}(b).
Moreover $g_1$ has a hyperbolic basic set $K_1$, and there exists a semi-conjugacy $\zeta_0:K_1\rightarrow K$
which is at most 4-to-1 such that 
$$\zeta_0\circ g_1(\bold{x})= f\circ \zeta_0(\bold{x}), \forall \bold{x}\in K_1.$$ 
\begin{figure}[h]
\centering
\includegraphics[width=100mm,scale=0.7]{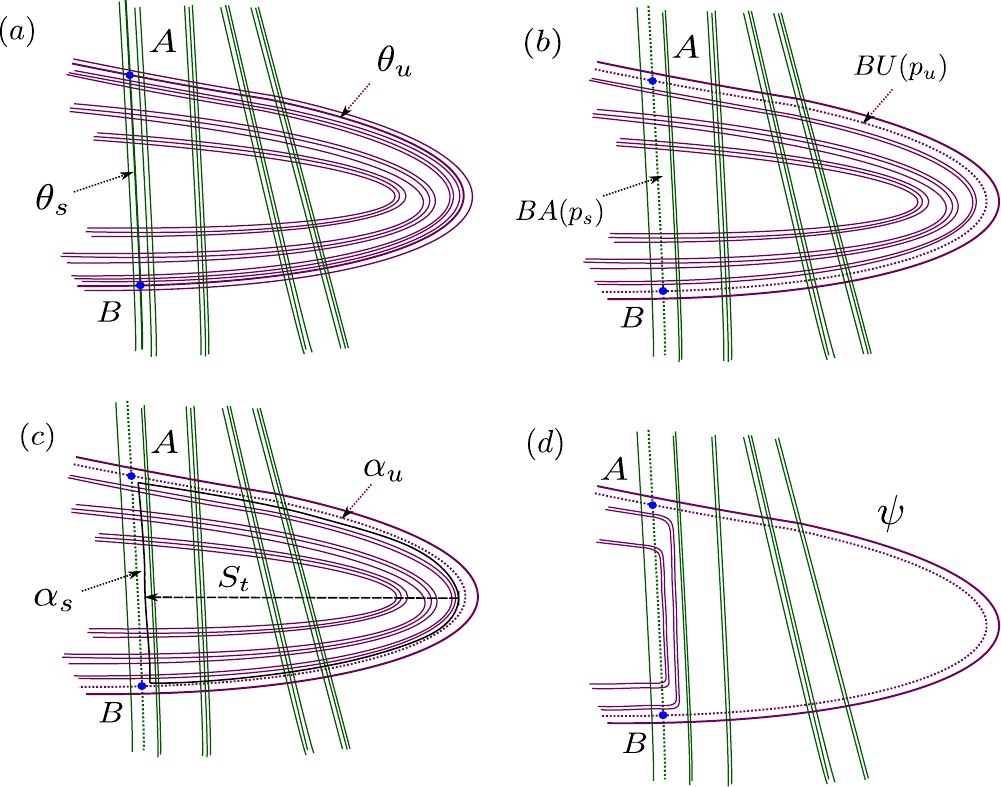}
\caption{A pruning difeotopy.}
\label{fig:deformations}
\end{figure}

Next we define a difeotopy $S_t$ of the identity with support in $\operatorname{Int}(D)$  such that $S_1$ 
takes a curve $\alpha_u\subset BU(p_u)\cap \operatorname{Int}(D)$ to a curve in 
$\alpha_s\subset BA(p_s)\cap\operatorname{Int}(D)$, $S(\alpha_u)=\alpha_s$, in such a way that:
\begin{itemize}
\item $S_1(D\setminus BU(p_u))\subset BA(p_s)$;
\item $S^{-1}(D\setminus BA(p_s))\subset BU(p_u)$.
\end{itemize}
 See figures \ref{fig:deformations}(c) and \ref{fig:deformations}(d). Then  composing $S_t$ with $g_1$ one get a 
diffeotopy $f_t=S_t\circ g_1$ of $f$. Thus $f_t$ takes the non-wandering dynamics of $g_1$ in 
$\operatorname{Int}(D)$ and pushes it to the attracting basin of $p_s$. 
So the \textit{pruning diffeomorphism associated to $D$} will be given by $\psi:=f_1$, the end of 
the diffeotopy. That $\psi$ is a Smale map with a basic set $K_\psi$ included in $K_1$. Thus 
the semi-conjugacy is given by $\zeta:=\zeta_0|_{K_{\psi}}$.
\end{proof}

The following proposition proves that the topology of the invariant 
manifolds of $\psi$ within $D$ is  known: they are just 
deformations by $S_1$ of the invariant manifolds of $f$.
\begin{proposition}\label{prop:topology1}
Let $\gamma_s\subset D$ and $\gamma_u\subset D$ be segments of stable and unstable manifolds
of a point $\bold{x}\in K_\psi$ by $\psi$, respectively. Then
\begin{itemize}
\item[(a)] If $p_s\notin\theta_s$, there exists a segment $\gamma'_u\subset W^u(\bold{x},g_1)$ 
such that $\gamma_u=S_1(\gamma'_u\cap D)$.
\item[(b)] If $p_s\in\theta_s$, there exist a non-negative integer $n_0$ and a segment 
$\gamma'_u\subset W^u(\bold{x},g_1)$ such that $\gamma_u=\psi^{n_0}\circ S_1(\gamma'_u\cap D)$.
 \item[(c)] If $p_u\notin\theta_u$ then there exists a segment $\gamma'_s\subset W^s(\bold{x},g_1)$ 
 such that $\gamma_s=\gamma'_s$.
 \item[(d)] If $p_u\in\theta_u$ then there exist a non-negative integer $n_0\ge1$ and a segment  
 $\gamma'_s\subset W^s(\bold{x},g_1)$ such that $\gamma_s=\psi^{-n_0}(\gamma'_s)$.  
\end{itemize}
\end{proposition}
\begin{proof}
By lemma \ref{lem:boundary}, 
\begin{itemize}
\item[(p1)] either $p_s\notin\theta_s$ and $g_1^n(BA(p_s)\cap \operatorname{Int}(D))\cap \operatorname{Int}(D)=\emptyset,\forall n\geq1$,
or $g_1^{n_s}(BA(p_s)\cap\operatorname{Int}(D))\subset BA(p_s)\cap\operatorname{Int}(D)$; and
\item[(p2)] either $p_u\notin\theta_u$ and $g_1^{-n}(BU(p_u)\cap \operatorname{Int}(D))\cap \operatorname{Int}(D)=\emptyset,\forall n\geq1$
or $g_1^{n_u}(BU(p_u)\cap\operatorname{Int}(D))\subset BU(p_u)\cap\operatorname{Int}(D)$.
\end{itemize}
 Let $\bold{z}\in W^u(\bold{x},\psi)\cap \operatorname{Int}(D)$ and let $n\ge1$ be the smallest integer such that
$\psi^{-n}(\bold{z})\in\operatorname{Int}(D)$ then $\psi^{-i}(\bold{z})=g_1^{-i}\circ S_1^{-1}(\bold{z}), \forall i=0,...,n$.
Since $\operatorname{Int}(D)\setminus BA(p_s)$ is in the basin of an attractor point  for $\psi^{-1}$, it follows
that $g_1^{-n}\circ S_1^{-1}(\bold{z})\in BA(p_s)\cap \operatorname{Int}(D)$. Hence 
$S_1^{-1}(\bold{z})\in g_1^{n}(BA(p_s)\cap\operatorname{Int}(D))$.

By (p1), if $p_s\notin\theta_s$ then 
$\bold{z}\in S_1(g_1^n(BA(p_s)\cap \operatorname{Int}(D)))=g_1^{n}(BA(p_s)\cap\operatorname{Int}(D))$. 
So $\bold{z}\in g_1^{n}(BA(p_s)\cap \operatorname{Int}(D))\cap \operatorname{Int}(D)=\emptyset$. 
It is a contradiction.
 So $\psi^{-i}(\bold{z})=g_1^{-i}\circ S_1^{-1} (\bold{z}), \forall i\ge1.$ This implies that 
$S_1^{-1}(\bold{z})\in W^u(\bold{x},g_1)$, or $\bold{z}\in S_1(W^u(\bold{x},g_1))$. This proves (a).

Now suppose that $p_s\in\theta_s$. Let $n_0$ be the biggest non-negative integer 
such that $\psi^{-n_0}(\bold{z})\in \operatorname{Int}(D)$. So 
$\psi^{-j}(\psi^{-n_0}(\bold{z}))=g_1^{-j}\circ S_1^{-1}(\psi^{-n_0}(\bold{z})), \forall j\ge 1.$
Then $S_1^{-1}(\psi^{-n_0}(\bold{z}))\in W^u(\bold{x},g_1)$. Then $\bold{z}\in \psi^{n_0}\circ S_1(W^u(\bold{x},g_1))$. 
This proves (b).
\begin{figure}[h]
\centering
\includegraphics[width=85mm,scale=0.6]{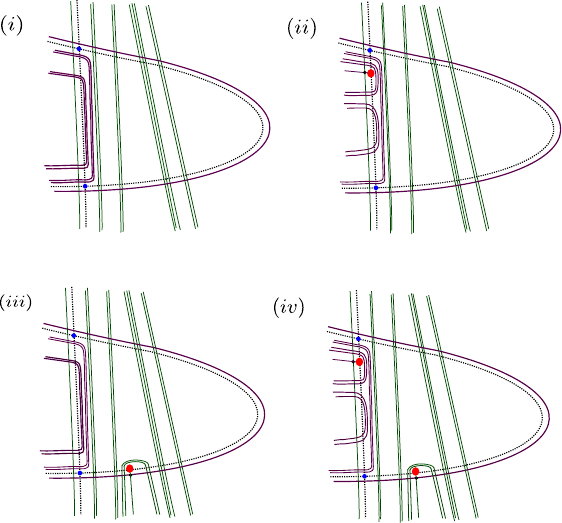}
\caption{Effects of the pruning isotopy on the invariant manifolds of $f$.
The invariant manifolds showed correspond to the pruning diffeomorphism $\psi$.}
\label{fig:invariant}
\end{figure}

Let $\bold{z}\in W^s(\bold{x},\psi)\cap \operatorname{Int}(D)$ and let $n\ge0$ be the smallest integer
 such that $\psi^{n}(\bold{z})\in g_1^{-1}(\operatorname{Int}(D))$. That is $\psi^i(\bold{z})=g_1^i(\bold{z}),\forall i=1,..,n$.
It follows from definition of $S_1$ that $g_1^{-1}(\operatorname{Int}(D)\setminus BU(p_u))$ is included in the
basin  of one attractor point of $\psi$, so $g_1^{n}(\bold{z})\in g_1^{-1}(BU(p_u)\cap \operatorname{Int}(D))$.
If $p_u\notin\theta_u$, by (p2), $g_1^{-n}(BU(p_u)\cap \operatorname{Int}(D))=\emptyset, \forall n\ge1$. 
In this case, we have a contradiction since $\bold{z}\in g_1^{-n-1}(BU(p_u)\cap \operatorname{Int}(D))\cap \operatorname{Int}(D)$. So 
$\psi^i(\bold{z})=g_1^i(\bold{z}),\forall i\ge1.$
This implies that $\bold{z}\in W^s(\bold{x},g_1)$. It proves (c). 

Now suppose that $p_u\in\theta_u$ then let $n_0$ be the biggest 
non-negative integer such that $\psi^{n_0}(\bold{z})\in\operatorname{Int}(D)$. So
$\psi^j\circ \psi^{n_0}(\bold{z})=g_1^j\circ \psi^{n_0}(\bold{z}), \forall j\ge 1.$
Then $ \psi^{n_0}(\bold{z})\in W^s(\bold{x},g_1)$. This implies that 
$\bold{z}\in \psi^{-n_0}(W^u(\bold{x},g_1))$. So we have proved (d).
\end{proof} 

Proposition \ref{prop:topology1} can be useful in applications where it is 
important to know how are modified the stable and unstable manifolds under 
the pruning isotopy. If the derivatives $Df^{n_s}(p_s)$ and $Df^{n_u}(p_u)$ 
have positive eigenvalues, then the invariant manifolds of $\psi$ within $D$ 
are pictured in figure \ref{fig:invariant} for some cases considered in 
proposition \ref{prop:topology1}: (i) if $p_s\notin\theta_s$ and $p_u\notin \theta_u$,
(ii) if $p_u\in\theta_s$ and $p_u\notin\theta_u$, (iii) if $p_s\notin \theta_s$
and $p_u\in\theta_u$, and (iv) if $p_s\in\theta_s$ and $p_u\in\theta_u$. 
The reader is encouraged to draw the invariant manifolds of $\psi$ if one 
of those derivatives has a negative eigenvalue.

From proposition above, for knowing how the bigons are created or destroyed
we just have to concern with the deformation  of the invariant manifolds by $S_1$ 
within $D$. So $\psi$ must just be understood as $f$ without intersections of 
stable and unstable manifolds within $\operatorname{Int}(D)$ (and then 
within all its iterates).

\section{Forcing on homoclinic horseshoe orbits}\label{sec:horseshoe}
In this section we will study certain homoclinic orbits of the Smale 
horseshoe, one of the most famous diffeomorphism in dynamical systems. We 
will determine the dynamical core forced by them exhibiting 
the sequence of pruning maps (or pruning domains) that are sufficient
for eliminating all the bigons relative to these orbits. 

We are only concerned with homoclinic orbits $P=P^w_0$ to $0^\infty$ which have 
as code ${}^\infty01{}^0_1w{}^0_110{}^\infty$ where $w$ is a finite word called the 
\textit{decoration of $P$} (See section below). By a Handel's result, cited in \cite{BoyHall}, 
it is known that the homoclinic orbit ${}^\infty010\cdot10^\infty$ forces every 
horseshoe orbit as it was shown in example \ref{ex:horseshoe}. 
So our study here is devoted to try of making similar 
conclusions for other homoclinic orbits, that is, which are the orbits
 (periodic or not) forced by a given homoclinic one. 
 
 There exist some  methods for determining $\mathcal{C}_P$ for homoclinic orbits. 
 The first one, introduced in Hulme's thesis \cite{Hul}, is a generalization of the 
Bestvina-Handel algorithm to construct pseudo-Anosov-like representatives of certain
homoclinic or heteroclinic braids which can be considered as translation-ends
classes. The second one was stated by Collins in \cite{Coll1} where the trellis 
of a homoclinic or heteroclinic orbit allows to construct a graph representative
which contains the dynamics forced by that orbit and, in certain cases, also 
allows to define a hyperbolic map realising the entropy bound \cite{Coll3}.
There are two main differences between these methods and our technique. 
The first one is the fact that pruning theory  can be applied wherever the 
Collins's method does not work. For example, in \cite[Example 4.3]{Coll3}  
it is pointed out that for the trellis associated to the orbit 
$P={}^\infty01{}^0_1{}^0_110{}^\infty$ there is no 
a minimiser hyperbolic diffeomorphism. As we will see in section \ref{sec:star},
 $P$ corresponds to the \textit{star orbit} $P^{1/3}_0$  for which there exists 
a well-defined pruning region, and so, a Smale map realising $\mathcal{C}_P$.
Other difference consists in that, once we have found a pruning region for 
an individual homoclinic orbit, it is easy to generalize that region for an 
infinite family of  decorations; the reason for it is the observation that 
the pruning regions seem to depend only on the \textit{combinatorics} of 
the orbit and do not depend on the coordinates themselves. It will be interesting to study 
the relation between the combinatorics and the pruning regions in a general 
context.
\subsection{Smale horseshoe}\label{sec:horseshoedefi}
The Smale horseshoe is a well-known hyperbolic diffeomorphism $F$ on $D^2$
which acts as in figure \ref{fig:horseshoe}(a). Its non-wandering dynamics consists 
of an attracting fixed point in the left semi-circle, and a compact basic set $K$
included in the regions $V_0$ and $V_1$. 
Furthermore, there exists a conjugacy between $F$, restricted to $K$, and the
\textit{shift} $\sigma$, defined on the compact set $\Sigma_2=\{0,1\}^\mathbb{Z}$. Thus
each point $\bold{x}\in K$ is represented by a sequence $\bold{s}=(s_i)_{i\in\mathbb{Z}}$ where
$s_i=0$, if $F^i(\bold{x})\in V_0$, and $s_i=1$, if $F^i(\bold{x})\in V_1$.
\begin{figure}[h]
\centering
\includegraphics[width=105mm,height=40mm]{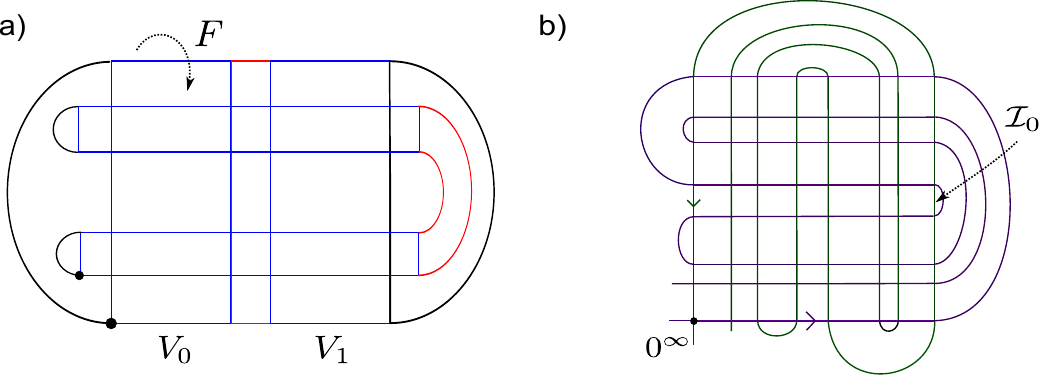}
\caption{The Smale horseshoe map and its homoclinic tangle.}
\label{fig:horseshoe}
\end{figure}

In the following sections, a point $\bold{x}\in K$ will be always represented by its symbol code. 
Each point $\bold{x}\in K$ has two invariant manifolds $W^s(\bold{x})$ and $W^u(\bold{x})$ that are 
dense in $K$ and it will be supposed that they are vertical and horizontal,
respectively. If $\bold{x}=\bold{s}_-\cdot\bold{s}_+$, where $\bold{s}_+$ and $\bold{s}_-$ are unilateral sequences
in the compact space $\Sigma_2^+=\{0,1\}^\mathbb{N}$, 
then we can project $\bold{x}$ along its invariant manifolds to
 the lowest unstable manifold of $0^\infty$ for obtaining ${}^\infty0\cdot\bold{s}_+$,
  and to the leftmost stable manifold of $0^\infty$ for obtaining $\bold{s}_-\cdot0{}^\infty$ .
One can compare two points of $K$ using their positions $\bold{s}_+$ and $\bold{s}_-$ in 
$\Sigma_2^+$ given by the \textit{unimodal order} 
$\geqslant_1$ which relates two symbol sequences  by the following rule: 
If $\bold{s}=s_0s_1\cdots$ and $\bold{t}=t_0t_1\cdots$ are two sequences in $\Sigma_2^+$ 
with $s_i=t_i$ for all $i\leq k$ and $s_{k+1}\neq t_{k+1}$, then $\bold{s}>_1\bold{t}$ when:
\begin{itemize}
\item[(O1)] $\sum_{i=0}^k s_i$ is even and $s_{k+1}>t_{k+1}$, or
\item[(O2)] $\sum_{i=0}^k s_i$ is odd and $s_{k+1}<t_{k+1}$.
\end{itemize}
So $\bold{s}\geqslant_1\bold{t}$ if $\bold{s}=\bold{t}$ or $\bold{s}>_1\bold{t}$. Moreover
let $\bold{s}=\bold{s}_-\cdot\bold{s}_+,\bold{t}=\bold{t}_-\cdot\bold{t}_+ $ be points of
$\Sigma_2$ then
\begin{itemize}
\item if $\bold{s}_+\geqslant_1\bold{t}_+$, we say that $\bold{s}\geqslant_x\bold{t}$, and 
\item if $\bold{s}_-\geqslant_1\bold{t}_-$, we say that $\bold{s}\geqslant_y\bold{t}$.
\end{itemize}
Denote by $\overline{w}$ to the infinite repetition $ww...$ of a word $w$. We will say that a word $w=w_1w_2\cdots w_M$ is 
\textit{even} if $\sum_{i=1}^M w_i$ is even; otherwise $w$ is \textit{odd}.
Let $\widehat{w}=w_Mw_{M-1}\cdots w_1$ be the reversal word of $w$.

Here an horseshoe orbit will be denote by $R$. If $R$ is a periodic orbit then 
the \textit{code} of $R$, denoted by $c_R$ is the symbolic representation 
of the rightmost point of $R$ in the unimodal order. When $R$ is not periodic, 
the code of $R$ can be taken as the symbolic representation of some of its points.

Note that the Smale horseshoe has only one bigon $\mathcal{I}_0$ (and its iterates) formed 
by a stable segment $\theta_s^0$  and an unstable segment $\theta_u^0$ which 
intersect at the homoclinic points  ${}^\infty01{}1\cdot10^\infty$ and 
${}^\infty01{}0\cdot10^\infty$. See figure \ref{fig:horseshoe}(b).

Observe that $F$ does not have special pairs of points.
\subsection{Maximal decorations}
In this section we will study maximal decorations. A decoration $w$
is \textit{maximal} if $w0$ and $w1$ are maximal codes in the 
unimodal order, that is, $\sigma^i(\overline{w0})\leqslant_1\overline{w0}$
and $\sigma^i(\overline{w1})\leqslant_1\overline{w1}$, 
for all $i\geqslant1$. 
\begin{example}\label{ex:max1}
Consider the decoration $w=10$. Since $w0=100$ and $w1=101$ are maximal codes, one 
get that $w$ is maximal. Among the four orbits within the isotopy class 
of  $P_0^{10}$, choose the orbit 
$P={}^\infty010\cdot w110{}^\infty={}^\infty010\cdot 10110{}^\infty$.
\begin{figure}[h]
\centering
\includegraphics[width=115mm,scale=0.6]{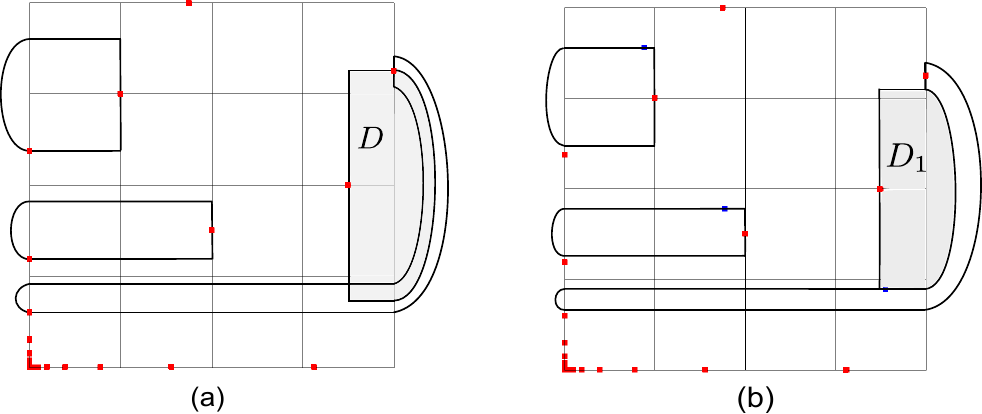}
\caption{The maximal region $D$ is not a pruning domain, but it can be reduced to 
a pruning domain $D_1$.}
\label{fig:exemplomax}
\end{figure}
Note that there exists a maximal region $D$, containing the bigon $\mathcal{I}$,
bounded by a stable segment $\gamma_s$ of ${}^\infty010\cdot 10110{}^\infty$ and an unstable 
segment $\gamma_u$ of ${}^\infty010101\cdot10{}^\infty$.
Since $F^3(\gamma_s)\cap\operatorname{Int}(D)\neq\emptyset$, the domain $D$ is not a
pruning domain. See figure \ref{fig:exemplomax}(a). However
one can decrease $\gamma_s$ to a segment $\theta_s\subset\gamma_s$ such that 
$\theta_s$ and a segment $\theta_u$ bound a pruning domain $D_1$. It is not difficult
to see that $\theta_u$ is included in the unstable manifold of $(w0)^{\infty}=(100)^\infty$.
See figure \ref{fig:exemplomax}(b). Let $\psi$ be the pruning map associated to $D_1$. 
As one will see below, $D_1$ is sufficient for eliminating all the bigons relative to $P^{10}_0$,
that is, for proving that $\psi$ does not have bigons relative to $P^{10}_0$. 
\begin{figure}[h]
\centering
\includegraphics[width=120mm,scale=0.6]{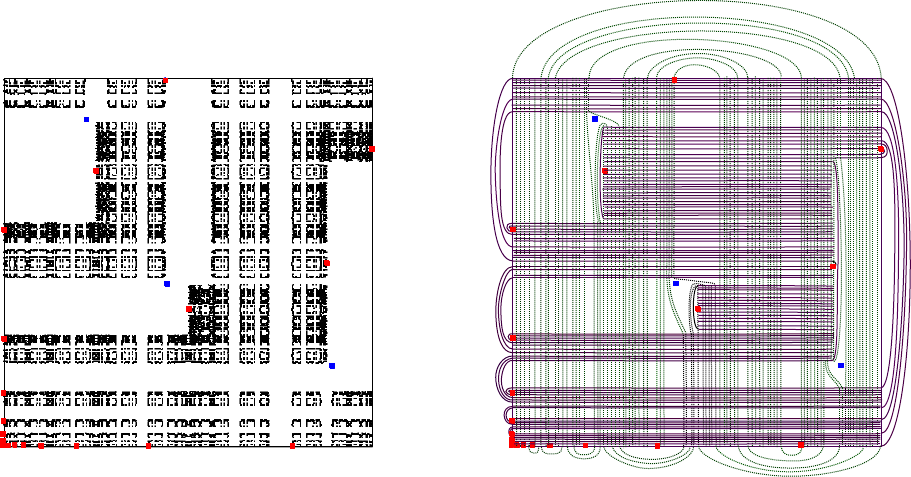}
\caption{A representation of the dynamical core of $P^{10}_0$ in the symbol plane
and the invariant manifolds of $\psi$.}
\label{fig:orbitexem}
\end{figure}
Figure \ref{fig:orbitexem} shows the orbits (of periods less than $16$) that do not 
intersect $D_1$ which are the orbits that belong to the dynamical core forced by $P^{10}_0$
and the invariant manifolds of $\psi$. It was pictured with the program \textit{Domains.exe} 
available in \cite{MenRes} which is a program to prune the Smale horseshoe using at most $5$
pruning domains.
\end{example}

A similar argument to this one used in example above works for an arbitrary maximal decoration. 
\begin{definition}\label{def:prunmax}
Let $w$ be a maximal decoration. We have to distinguish two cases as follows:
\begin{itemize}
\item[(a)] If $w$ is even then define $\mathcal{P}_w=\operatorname{Int}(D_1)$ where 
$D_1$ is the pruning domain bounded by a stable segment 
$\theta_s\subset W^s({}^\infty010\cdot w010{}^\infty)$ and an unstable 
piece $\theta_u\subset W^u((w1)^\infty)$ whose end-points are the heteroclinic points
${}^\infty(w1)\cdot w010^\infty$  and ${}^\infty(w1)w0\cdot w010^\infty$.
\item[(b)] If $w$ is odd then define $\mathcal{P}_w=\operatorname{Int}(D_1)$ 
where $D_1$ is the pruning domain bounded by a stable segment 
$\theta_s\subset W^s({}^\infty010\cdot w110{}^\infty)$ and an unstable piece 
$\theta_u\subset W^u((w0)^\infty)$ whose end-points are the 
heteroclinic points ${}^\infty(w0)\cdot w110^\infty$ 
and ${}^\infty(w0)w1\cdot w110^\infty$.
\end{itemize}
\end{definition}
These sets are represented in the symbol plane in figure \ref{fig:pruningdomains} and are 
called \textit{pruning regions associated to $w$}. 
\begin{figure}[h]
\centering
\includegraphics[width=130mm,scale=0.6]{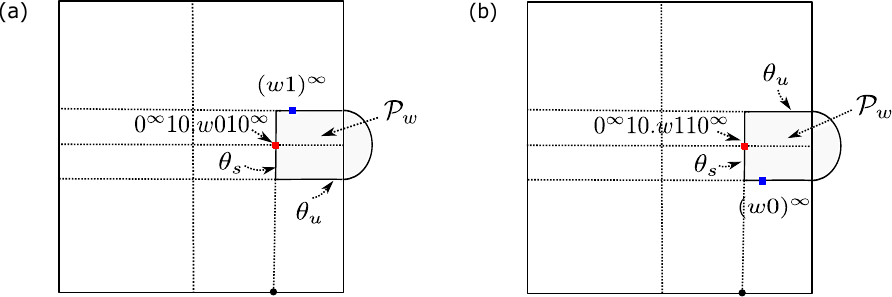}
\caption{The set $\mathcal{P}_w$ when $w$ is even (a) and odd (b).}
\label{fig:pruningdomains}
\end{figure}
The horizontal lines represent the unstable manifolds and the vertical lines
represent the stable manifolds. So the following theorem describes the dynamical core 
forced by $P_0^w$. 
\begin{theorem}\label{thm:main}
Let $w$ be a maximal decoration. 
Then
$$\mathcal{C}_{P^w_0}=\{\bold{x}\in \Sigma_2:\operatorname{Orb}(\bold{x},F)\cap\mathcal{P}_w=\emptyset\}=\Sigma_2\setminus\bigcup_{i\in\mathbb{Z}}\sigma^i(\mathcal{P}_w)$$
up to a finite-to-one semiconjugacy which is injective on the set of non boundary periodic points.
\end{theorem}

The following lemma is needed for proving theorem \ref{thm:main}.
\begin{lemma}\label{lem:maxim}
Let $w=w_1\cdots w_M$ be a maximal decoration. Then
\begin{itemize}
\item[(i)] If $w$ is even then,  for all $i=1,\cdots,M$, 
 $$w_iw_{i+1}\cdots w_M010^\infty\leqslant_1 w010^\infty \textrm{ and }\sigma^i((w1)^{\infty})\leqslant_1 w010{}^{\infty}.$$
\item[(ii)] If $w$ is odd then,  for all $i=1,\cdots,M$,
 $$w_iw_{i+1}\cdots w_M110^\infty\leqslant_1 w110^\infty \textrm{ and }
\sigma^i((w0)^{\infty})\leqslant_1 w110{}^{\infty}.$$
\end{itemize}
\end{lemma}
\begin{proof}
Let us only to prove item (i). If there exists $k<M-i$
such that $w_i\cdots w_{i+k}=w_1\cdots w_{1+k}$ and $w_{i+k+1}\neq w_{k+1}$, then 
both inequalities in (i) hold trivially. Suppose that $w_i\cdots w_M=w_1\cdots w_{M-i+1}$. 
We have two cases:
\begin{itemize}
\item if $w_i\cdots w_M$ is even, since $(w_i\cdots w_M{}^0_1w_1\cdots w_{i-1})^{\infty}\leqslant_1(w0)^{\infty}$, it follows 
that $w_{M-i+2}=1$. Thus $w_i\cdots w_M0\leqslant_1 w$ which implies that $w_i\cdots w_M010^\infty\leqslant_1 w010^\infty$. 
Suppose that $\sigma^{i}((w1)^{\infty})=(a_1\cdots a_{M}a_{M+1})^{\infty}\leqslant_1(w1){}^{\infty}$. 
If $a_1\cdots a_M=w_1\cdots w_M$ then $a_{M+1}=0$, which implies that $\sigma^i((w1)^{\infty})=(w0)^{\infty}$. It is contradiction 
because $(w0)^{\infty}$ is maximal. Thus $a_1\cdots a_M$ and $w_1\cdots w_M$ disagree and then $a_1\cdots a_M\leqslant_1w$. 
Hence $\sigma^{i}((w1)^{\infty})\leqslant_1w010{}^{\infty}$.
\item if $w_i\cdots w_M$ is odd the proof follows the same arguments.
\end{itemize}
\end{proof}
\begin{proof}[Proof of theorem \ref{thm:main}]
Let $P_0^w$ be the homoclinic orbit ${}^\infty010w0\cdot10^\infty$ where 
$w$ is maximal and even with length $M$. 
Consider the domain $D_1$ as in definition \ref{def:prunmax} which clearly contains $\mathcal{I}_0$.
 Note that $D_1$ is bounded by two segments $\theta_s$ and $\theta_u$ 
where $\theta_s$ contains ${}^\infty0 10\cdot w010^\infty$ and $\theta_u$
contains the periodic point $p_u=(w1)^\infty$.
So they are defined by the heteroclinic points $A_1={}^\infty(w1)\cdot w010^\infty$ 
and $A_2={}^\infty(w1)w0\cdot w010^\infty$, that is,
$\theta_s\subset W^s(0^\infty)$ and $\theta_u\subset W^u((w1)^\infty)$.
\begin{figure}[h]
\centering
\includegraphics[width=70mm,scale=0.6]{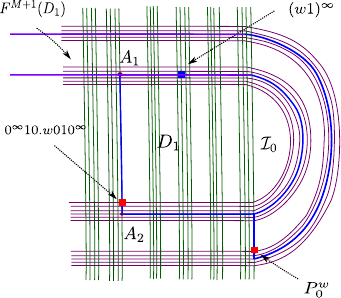}
\caption{The invariant manifolds of $F$.}
\label{fig:pruning}
\end{figure}
\begin{itemize}
\item[(m1)] By lemma \ref{lem:maxim}(i), it follows that $F^i(\theta_s)$ is to 
the left of $w010^\infty$ for all $i=1,\cdots, M-1$. Then $F^i(\theta_s)\cap\operatorname{Int}(D_1)=\emptyset$ for $i=1\cdots M-1$;
\item[(m2)] since $F^M(A_1)={}^\infty(w1) w\cdot010^\infty$, it follows that $F^M(\theta_s)\cap\operatorname{Int}(D_1)=\emptyset$;
\item[(m3)] since $F^{M+1}(A_1)={}^\infty(w1) w0\cdot10^\infty$, it follows that $F^{M+1}(A_1)$ belongs 
to the horizontal unstable leaf that contains $A_2$, and thus $F^{M+1}(\theta_s)\cap\operatorname{Int}(D_1)=\emptyset$. 
\item[(m4)] since $F^i(A_1)={}^\infty(w1) w01\cdots0 \cdot0^\infty, \forall i>M+1$, it follows that $F^i(\theta_s)\cap\operatorname{Int}(D_1)=\emptyset, \forall i >M+1$.
\end{itemize}
Thus for all $i\geqslant 1$, $F^i(\theta_s)\cap\operatorname{Int}(D_1)=\emptyset$. 
Hence $F^n(\mathring{\theta_s})\cap\theta_u=\emptyset$ for
 all $n\ge1$ which implies that $F^{-n}(\theta_u)\cap\mathring{\theta_s}=\emptyset$
  for all $n\ge1$. So if for some $n\ge 1$, 
  $F^{-n}(\theta_u)\cap\operatorname{Int}(D_1)\neq\emptyset$ then 
$F^{-n}(\theta_u)\subset\operatorname{Int}(D_1)$. This is not possible 
since $(w1)^{\infty}\in\theta_u$ and, by lemma \ref{lem:maxim}(i),  
$\operatorname{Orb}((w1)^\infty)\cap \operatorname{Int}(D_1)=\emptyset$.
 Then $D_1$ is a pruning domain. Figure \ref{fig:pruning} shows $D_1$ and 
its $(M+1)$-iterate $F^{M+1}(D_1)$.

By the differentiable pruning theorem \ref{theo:pruning}, there exists a Smale map $\psi$, 
isotopic to $F$,
with a basic set $K_{\psi}$ which is semiconjugated to the set 
\begin{equation}\label{eq:forcedmaximal}
\Sigma_2\setminus\bigcup_{i\in\mathbb{Z}}\sigma^i(\mathcal{P}_w)
=\{\bold{x}\in\Sigma_2:\operatorname{Orb}(\bold{x},F)\cap\mathcal{P}_w=\emptyset\}
\end{equation}
by a semiconjugacy which is injective on the set of periodic orbits. 
As in the proof of theorem \ref{theo:pruning}, $p_u$ is a repelling point of $\psi$ 
and has an repelling basin $BU(p_u)$. See figure \ref{fig:pruningwit}. Since the pruning isotopy is supported in 
$D_1$ and the pruning map uncrosses the invariant manifolds inside $D_1$ 
and its iterates (by proposition \ref{prop:topology1}), it follows that $\psi$ 
has a bigon $\mathcal{I}'$ which is a deformation of $\mathcal{I}_0$ and does not have any bigon 
inside $D_1$. The  bigon $\mathcal{I}'$ has the point ${}^\infty010\cdot w010^\infty$ in its boundary. 
See Figure \ref{fig:pruningwit}. 

A combinatorial analysis on the iterates of the pruning domain
 gives us the code of the boundary periodic points of $\psi$.
\begin{proposition}\label{prop:pronged}
The basic set $K_{\psi}$ of $\psi$ has an $u$-boundary $3(M+1)$-periodic orbit $B_w$ 
whose unstable manifolds define a principal region of $3$ sides that contains a
 $(M+1)$-periodic singularity $S_w$. 
If $w$ is even then the code of $B_w$ is $w1w0w1$ and the code of $S_w$ is $w1$, 
and if $w$ is odd then the code of $B_w$ is $w0w1w0$ and the code of $S_w$ is $w0$.
\end{proposition}
\begin{proof}
Suppose that $w$ is even. Iterating $(M+1)$ times the domain $D_1$, it follows 
that the invariant manifolds are 
uncrossed in the region $C^1\supset D_1$ defined by
$$C^1=\{p:w010{}^\infty\leqslant_x p\leqslant_x10{}^{\infty}, A_2\leqslant_yp\leqslant_y{}^{\infty}(w1)w0w1\cdot w010{}^{\infty}\}$$
which is between the unstable leaf of $A_2$ and the unstable leaf $l_1$ passing through 
the point ${}^{\infty}(w1)w0w1\cdot w010{}^{\infty}$. See figure \ref{fig:exampleboundary}(a). 
\begin{figure}[h]
\centering
\includegraphics[width=140mm,scale=0.6]{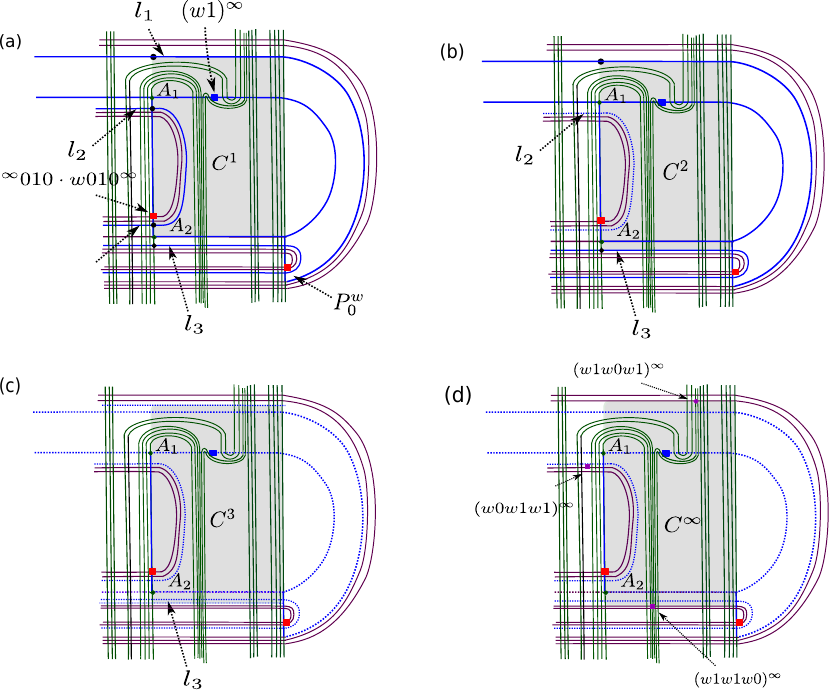}
\caption{}
%The orbit of the unstable boundary point with code $w1w0w1$, with $w$
%even, collapses to the periodic orbit with code $w1$.}
\label{fig:exampleboundary}
\end{figure}
Iterating $M+1$ times the leaf 
$l_1$ it follows that the uncrossing happen between $l_1$ and the unstable leaf $l_2$
passing through the points ${}^{\infty}(w1)w0w1w0\cdot w010{}^{\infty}$ and 
${}^{\infty}(w1)w0w1w1\cdot w010{}^{\infty}$. Iterating $M+1$ times the leaf $l_2$ 
one can conclude that the invariant manifolds are uncrossed in the region $C^2\supset C^1\supset D_1$ defined by
$$C^2=\{p:w010{}^\infty\leqslant_x p\leqslant_x10{}^{\infty}, {}^{\infty}(w1)w0w1w1w0\cdot w010{}^{\infty}\leqslant_yp\leqslant_y{}^{\infty}(w1)w0w1\cdot w010{}^{\infty}\}$$
which is between the leaves $l_1$
and $l_3$ that contains the point ${}^{\infty}(w1)w0w1w1w0\cdot w010{}^{\infty}$. See figure \ref{fig:exampleboundary}(b). 

Repeating the process a second time using the leaves $l_3$ and $l_1$ one can show that 
the invariant manifolds are uncrossed in the region 
$$C^3=\{p:\begin{array}{c}
w010{}^\infty\leqslant_x p\leqslant_x10{}^{\infty}, \\
{}^{\infty}(w1)w0(w1w1w0)^2\cdot w010{}^{\infty}\leqslant_yp\leqslant_y{}^{\infty}(w1)w0(w1w1w0)w1\cdot w010{}^{\infty}\\
\end{array}\}$$
between unstable leaves passing through the points 
${}^{\infty}(w1)w0(w1w1w0)^2\cdot w010{}^{\infty}$ and ${}^{\infty}(w1)w0(w1w1w0)w1\cdot w010{}^{\infty}$. 
See figure \ref{fig:exampleboundary}(c). Repeating it $n+1$ times one can prove that the invariant manifolds are 
uncrossed in the region 
$$C^{n+1}=\{p:\begin{array}{c}
w010{}^\infty\leqslant_x p\leqslant_x10{}^{\infty}, \\
{}^{\infty}(w1)w0(w1w1w0)^n\cdot w010{}^{\infty}\leqslant_yp\leqslant_y{}^{\infty}(w1)w0(w1w1w0)^{n-1}w1\cdot w010{}^{\infty}\\
\end{array}\}.$$

In the limit the invariant manifolds are uncrossed in the region 
$$C^{\infty}=\{p:\begin{array}{c}
w010{}^\infty\leqslant_x p\leqslant_x10{}^{\infty}, \\
{}^{\infty}(w1w1w0)\cdot w010{}^{\infty}\leqslant_yp\leqslant_y{}^{\infty}(w1w1w0)w1\cdot w010{}^{\infty}\\
\end{array}\}.$$
between the unstable leaves passing through
${}^{\infty}(w1w1w0)\cdot w010{}^{\infty}$ and ${}^{\infty}(w1w1w0)w1\cdot w010{}^{\infty}$
which contain the periodic points $(w1w1w0)^{\infty}$ and $(w1w0w1)^{\infty}$, respectively.
Thus the orbit of $(w1w1w0)^{\infty}$ is an unstable boundary periodic orbit. See figure \ref{fig:exampleboundary}(d). 
\end{proof}
\begin{figure}[h]
\centering
\includegraphics[width=150mm,scale=0.6]{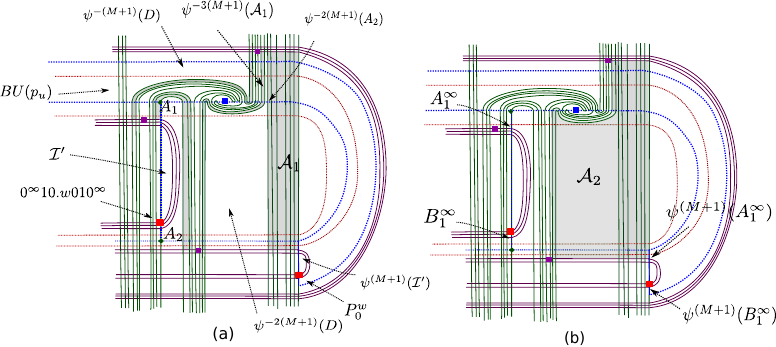}
\caption{The invariant manifolds of $\psi$.}
\label{fig:pruningwit}
\end{figure}
Let us to finish the proof of theorem \ref{thm:main} proving that $\psi$ has no bigons.
From proposition \ref{prop:pronged}, the region 
\begin{equation*}
\mathcal{A}_1=[{}^{\infty}(w1w0w1)\cdot w1w0w010{}^{\infty},{}^{\infty}(w1w0w1)\cdot10{}^{\infty}, {}^{\infty}(w1w1w0)\cdot10{}^{\infty}, {}^{\infty}(w1w1w0)\cdot w1w0w010{}^{\infty}],
\end{equation*} 
which is the rectangle whose boundary is formed by segment of invariant manifolds, does not contains bigons.
Using proposition \ref{prop:topology1} and the combinatorics of the edges of $\mathcal{A}_1$, one can see that 
 the rectangle
\begin{align*}
\psi^{-3(M+1)}(\mathcal{A}_1)&=[{}^{\infty}(w1w0w1)\cdot w1w0w1w1w0w010{}^{\infty},{}^{\infty}(w1w0w1)\cdot w1w0w110{}^{\infty},\\
& {}^{\infty}(w1w1w0)\cdot w1w1w010{}^{\infty}, {}^{\infty}(w1w1w0)\cdot w1w1w0 w1w0w010{}^{\infty}],
\end{align*}
has a stable side included in the stable side of $\psi^{-2(M+1)}(D)$. See figure \ref{fig:pruningwit}(a).
Since $\mathcal{A}_1$ has also a stable side in $\psi^{-2(M+1)}(D_1)$, one can extend $\mathcal{A}_1$ to a bigger 
rectangle 
\begin{align*}
\mathcal{A}_2&=[{}^{\infty}(w1w0w1)\cdot w1w0w1w1w0w010{}^{\infty},{}^{\infty}(w1w0w1)\cdot10{}^{\infty}, \\
&{}^{\infty}(w1w1w0)\cdot10{}^{\infty}, {}^{\infty}(w1w1w0)\cdot w1w1w0 w1w0w010{}^{\infty}]
\end{align*} 
without bigons. See figure \ref{fig:pruningwit}(b).  Repeating this process  for $\mathcal{A}_2$, we can construct
a rectangular region $\mathcal{A}_3\supset \mathcal{A}_2$ such that 
\begin{align*}
\mathcal{A}_3&=[{}^{\infty}(w1w0w1)\cdot (w1w0w1)^2w1w0w010{}^{\infty},{}^{\infty}(w1w0w1)\cdot10{}^{\infty}, \\
&{}^{\infty}(w1w1w0)\cdot10{}^{\infty}, {}^{\infty}(w1w1w0)\cdot (w1w1w0)^2 w1w0w010{}^{\infty}].
\end{align*} 
Repeating this process ad infinitum one obtains a region 
\begin{align*}
\mathcal{A}_{\infty}&=[{}^{\infty}(w1w0w1)\cdot (w1w0w1){}^{\infty},{}^{\infty}(w1w0w1)\cdot10{}^{\infty}, \\
&{}^{\infty}(w1w1w0)\cdot10{}^{\infty}, {}^{\infty}(w1w1w0)\cdot (w1w1w0){}^{\infty}]
\end{align*} 
without bigons. It is clear that $\mathcal{A}_{\infty}$ is periodic of period $3(M+1)$ and then 
there are no bigons in the region $\mathcal{A}_{\infty}\cup \psi^{-(M+1)}(\mathcal{A}_{\infty})\cup\psi^{-2(M+1)}(\mathcal{A}_{\infty})$.
Thus one can conclude that $\psi$ does not have bigons relative to $P^w_0$. 
  By theorem \ref{thm:first} and  (\ref{eq:forcedmaximal}), the orbits which do not intersect $\operatorname{Int}(D_1)$ 
 belong to the dynamics forced by $P^w_0$ up a finite-to-one semiconjugacy injective on  the set of 
 non-boundary periodic points.

By proposition \ref{prop:topology1}, the unstable invariant manifolds of $\psi_1$ are just deformations of  
unstable invariant manifolds of $F$ and so $\mathcal{I}_0$ is pushed by the pruning isotopy to  $\mathcal{I}'$. 
Using proposition \ref{prop:pronged}, the crossings between invariant manifolds situated in $[A_1,B_1]_s$ are 
situated in the segment $[A^{\infty}_1,B^{\infty}_1]_s$ where $A^{\infty}_1$ and $B^{\infty}_1$ are 
points at $W^u((w0w1w1)^{\infty})$. See figure \ref{fig:pruningwit}(b). Iterating $(M+1)$ times, one see 
that  there are no bigons in $\psi^{M+1}([A^{\infty}_1,B^{\infty}_1]_s)$ and so there $\psi$ does not have bigons.

If $w$ is odd the proof follows the same lines with minor modifications. 
\end{proof}

Figure \ref{fig:exampleunstable} shows how proposition \ref{prop:pronged} works when 
$w$ is even: three points with codes $w1w0w1$, $w0w1w1$ e $w1w1w0$ define a ideal 
polygon of $3$ sides that contains the point $(w1)^{\infty}$, creating a $3$-pronged singularity. 
\begin{figure}[h]
\centering
\includegraphics[width=75mm,scale=0.6]{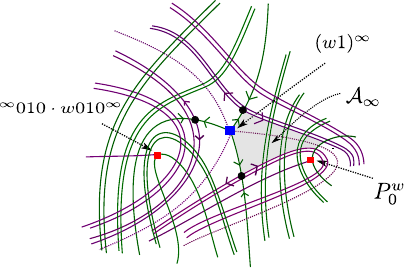}
\caption{If  $w$ is even, the orbit of the unstable boundary point with code $w1w0w1$ 
can be collapsed to the periodic orbit with code $w1$.}
\label{fig:exampleunstable}
\end{figure}

\begin{example}
Consider the homoclinic orbit $P^{10}_0$ of the point $p_0={}^{\infty}010\cdot10110{}^{\infty}$ 
with maximal decoration of example \ref{ex:max1}, and let $\psi$ be its hyperbolic pruning map 
associated which, by proposition \ref{prop:pronged}, has a period $3$ $3$-pronged singularity at the periodic orbit $100$. 
\end{example}

From theorem \ref{thm:main} it is easy to determine the forcing relation between 
two homoclinic orbits $P_0^w$ and $P^{w'}_0$: 
if the orbit of $P^{w'}_0$ does not intersect the pruning region  $\mathcal{P}_w$ 
then $P_0^w$ forces $P^{w'}_0$.
For example, a consequence of theorem \ref{thm:main} is the following 
corollary.
\begin{corollary}\label{cor:maximal}
Let $w$ and $w'$ be two maximal decorations satisfying $w\geqslant_1 w'$ and 
$\widehat{w}\geqslant_1\widehat{w'}$.
 Then $P^{w}_0\geqslant_2 P^{w'}_0$. 
\end{corollary}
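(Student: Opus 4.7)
By Theorem \ref{thm:main}, the orbits forced by $P^w_0$ are precisely those $R\subset\Sigma_2$ that are disjoint from $\bigcup_{i\in\mathbb{Z}}\sigma^i(\mathcal{P}_w)$. Since the orbit of $P^{w'}_0$ is itself shift-invariant, showing $P^w_0\geqslant_2 P^{w'}_0$ reduces to showing that no point in the orbit of $P^{w'}_0$ lies in the rectangle $\mathcal{P}_w$ of the symbol plane.

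The plan is to write the code of $P^{w'}_0$ as the bi-infinite sequence $s={}^\infty 0\,1\,a\,w'\,b\,1\,0^\infty$, where $a,b\in\{0,1\}$ are determined by the parity of $w'$ so that $s$ is maximal, and to check every shift $\sigma^k(s)$ for membership in $\mathcal{P}_w$. A point $\xi_-.\xi_+$ belongs to $\mathcal{P}_w$ iff both the forward coordinate $\xi_+$ and the reversed backward coordinate $\widehat{\xi_-}$ lie in specific unimodal-order intervals determined by the bounding stable segment $\theta_s\subset W^s({}^\infty 010.w010^\infty)$ (for $w$ even, respectively $w110^\infty$ for $w$ odd) and the bounding unstable segment $\theta_u\subset W^u((w1)^\infty)$ (respectively $(w0)^\infty$ for $w$ odd). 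I would first dispatch all shifts $k$ for which either the prefix of $s$ ending at $k-1$ or the suffix of $s$ starting at $k$ is essentially a tail of $0^\infty$: in these cases one of the two coordinates of $\sigma^k(s)$ is unimodally too small to lie in $\mathcal{P}_w$, since the block $w$ must appear near the dot of every sequence in $\mathcal{P}_w$. This leaves only the finitely many ``central'' shifts where the block $w'$ straddles the dot in the same way that $w$ does in the defining code of $\mathcal{P}_w$.

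For each of these central shifts, I would use the two hypotheses. The forward coordinate of $\sigma^k(s)$ begins with some subword of $w'$ followed by $b\,1\,0^\infty$; the condition $w\geqslant_1 w'$, together with maximality of $w'$ and $w$ (which guarantees that unimodal comparisons of infinite suffixes are governed by the comparison of the finite words $w,w'$), forces this coordinate to lie strictly on the side of $w\,{}^0_1 10^\infty$ which is outside $\mathcal{P}_w$. Symmetrically, the backward coordinate of $\sigma^k(s)$, read in reverse, begins with a subword of $\widehat{w'}$ followed by a tail ending in $0^\infty$; the condition $\widehat{w}\geqslant_1\widehat{w'}$ forces it to lie strictly outside the unstable bound $(1\widehat{w})^\infty$ (or $(0\widehat{w})^\infty$). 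Consequently $\sigma^k(s)\notin\mathcal{P}_w$ for every $k$, and Theorem \ref{thm:main} yields $P^w_0\geqslant_2 P^{w'}_0$.

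The main obstacle will be the careful bookkeeping of parity. The definition of $\mathcal{P}_w$ switches between two cases depending on the parity of $w$, and the symbols $a,b$ filling the code of $P^{w'}_0$ depend on the parity of $w'$. One must verify in each of the four parity combinations that the hypothesis $w\geqslant_1 w'$ indeed translates to the forward coordinate being on the \emph{correct} side of the stable bound of $\mathcal{P}_w$ (the side containing $P^w_0$ and not the interior), and likewise for $\widehat{w}\geqslant_1\widehat{w'}$ and the unstable bound. Once one chases through the rules (O1)--(O2) for the unimodal order with the correct parities, the comparisons of infinite sequences reduce to the stated comparisons of the finite decorations, and the conclusion follows.
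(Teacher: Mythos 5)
Your proposal follows exactly the paper's argument: the paper also deduces the corollary by observing that the hypotheses $w\geqslant_1 w'$ and $\widehat{w}\geqslant_1\widehat{w'}$ imply $\operatorname{Orb}(P^{w'}_0)\cap\mathcal{P}_w=\emptyset$ and then invoking Theorem \ref{thm:main}. In fact the paper states this disjointness without any verification, so your shift-by-shift plan (with the parity bookkeeping) is a more detailed version of the same one-line proof.
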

\begin{proof}
From conditions $w\geqslant_1 w'$ and $\widehat{w}\geqslant_1\widehat{w'}$, it holds 
that $\mathcal{P}_{w}\subset\mathcal{P}_{w'}$  and then
 $\operatorname{Orb}(P^{w'}_0)\cap\mathcal{P}_w=\emptyset$. Thus, by theorem \ref{thm:main},
$P^{w}_0\geqslant_2 P^{w'}_0$.
\end{proof}
 It is interesting to compare corollary \ref{cor:maximal} with one-dimensional dynamics
where, by the Milnor-Thurston kneading  theory \cite{MilThu}, the condition  $w\geqslant_1w'$ implies
that $w010{}^\infty$ forces the existence of $w'010{}^\infty$.
This result is also closely related to  \cite[Theorem 4.8]{HolWhi} by Holmes and Whitley 
where it was  proved that, for small values of $b>0$ in the H\'enon family 
$H_{a,b}(x,y)=(a-x^2-by,x)$, if $w$ and $w'$ are maximal codes and 
$\widehat{w}01\geqslant_1\widehat{w'}01$ then $P^w_0$ appears after $P^{w'}_0$.

\begin{example}
By corollary \ref{cor:maximal}, it follows that 
$${}^\infty01{}^0_110^m1a10^n1{}^0_110^\infty\geqslant_2 {}^\infty01{}^0_110^l1b10^k1{}^0_110^\infty$$
for $m>l>k$, $m>n>k$ and any words $a$ and $b$ such that $10^m1a10^n1{}^0_1$ and $10^l1b10^k1{}^0_1$ are maximal codes.
\end{example}

\subsection{Concatenations of NBT decorations}\label{sec:nbt}
Another group of decorations for which we know their pruning regions are these ones
called \textit{concatenations of NBT codes}.
%a type of orbits that were studied in \cite{MenExt}. 
To define this kind of orbits we 
need the notion of NBT orbits introduced in \cite{Hall} by T. Hall.
\begin{definition} \normalfont
To every rational number $q$ in $\widehat{\mathbb{Q}}:=\mathbb{Q}\cap(0,\frac{1}{2})$ 
we associate a symbol code in the following way: If $q=\frac{m}{n}$, let $L_q$ be the
 straight line segment joining the origin $(0,0)$ and the point $(n,m)$ in $\mathbb{R}^2$. 
 Then construct a finite word $c_q =s_0s_1...s_n$ by the following rule:
\begin{equation}
s_i=\left\{ \begin{array}{cc}
1 & \textrm{ if $L_q$ intersects some line $y=k, k\in\mathbb{Z}$, for $x\in(i-1,i+1)$} \\
0 & \textrm{ otherwise }\\
\end{array}
\right.
\end{equation}
\end{definition}
It implies that $c_q$ is palindromic and has the form 
$c_q=10^{\mu_1}1^20^{\mu_2}1^2\cdots1^20^{\mu_{m-1}}1^20^{\mu_m}1$. 
The $(n+2)$-periodic orbit $P_q$, having $c_q0$ or $c_q1$ as code, is called a \textit{NBT orbit}.

The following is the main result of \cite{Hall} which claims that the forcing
order restricted to NBT orbits coincides with the unimodal order.
\begin{theorem}[Hall]\label{thm:hallnbt}
Let $q,q'\in\widehat{\mathbb{Q}}$. Then
\begin{itemize}
\item[(i)]  $P_q$ is quasi-one-dimensional, that is, $P_q\geqslant_1R \Longrightarrow P_q\geqslant_2 R$.
\item[(ii)] $q\leq q' \Longleftrightarrow P_q\geqslant_1 P_{q'} \Longleftrightarrow P_q\geqslant_2 P_{q'} $.
\end{itemize}
\end{theorem}
A decoration $w$ is a \textit{concatenation of NBT codes} if there exists a 
finite sequence $\{q_i\}_{i=1}^n$ of distinct rational numbers in $\widehat{\mathbb{Q}}$
such that 
\begin{equation}
w=c_{q_1}0c_{q_2}0\cdots0c_{q_n}.
\end{equation}

We will give conditions for constructing a pruning diffeomorphism $\psi$ relative to 
the homoclinic orbit $P^w_0$, whose code is ${}^\infty01\cdot0c_{q_1}0c_{q_2}0\cdots c_{q_n}010^\infty$,
with a well-defined  pruning region. At first we have to organize the rational numbers in the 
real line $q_{i_1}>q_{i_2}>\cdots>q_{i_n}$. By theorem \ref{thm:hallnbt}, their codes
satisfy $c_{q_{i_1}}0\leqslant_1 c_{q_{i_2}}0\leqslant_1\cdots\leqslant_1 c_{q_{i_n}}0$. 
Denote $c_{q_0}=c_{q_{n+1}}=10^\infty$.

\begin{definition}[Limiting points]\label{defi:plist} \normalfont
Let $C_i$ be the point of the orbit $P^w_0$ whose code is ${}^{\infty}010c_{q_1}0\cdots0 c_{q_{i-1}}0\cdot c_{q_i}0\cdots 0c_{q_n}010{}^{\infty}$.
A point $C_i$, with $i=1,\cdots,n$, is a \textit{limiting point} if there 
are no points $C_k$ in the region 
$$\mathcal{R}_{ii}=\{\bold{x}=\bold{s}_-\cdot\bold{s}_+\in\Sigma_2:\begin{array}{c}
c_{q_i}0\cdots c_{q_n}010^{\infty}\leqslant_1 \bold{s}_+\leqslant_1 10{}^{\infty}, \\
0c_{q_{i-1}}0c_{q_{i-2}}\cdots 0c_{q_1}010^{\infty}\leqslant_1\bold{s}_-\leqslant_1010{}^{\infty}\\
\end{array}\}.$$ 
The \textit{successor of a limiting point} $C_i$ is  another limiting point 
$C_j$ such that $q_j<q_i$ and 
the region 
$$\mathcal{R}_{ij}=\{\bold{x}=\bold{s}_-\cdot\bold{s}_+\in\Sigma_2:\begin{array}{c}
c_{q_i}0\cdots c_{q_n}010^{\infty}\leqslant_1 \bold{s}_+\leqslant_1 10{}^{\infty}, \\
0c_{q_{j-1}}0c_{q_{j-2}}\cdots 0c_{q_1}010^{\infty}\leqslant_1\bold{s}_-\leqslant_1010{}^{\infty}\\
\end{array}\}$$
 does not contain any other point $C_k$. 
\end{definition}
\begin{figure}[h]
\centering
\includegraphics[width=100mm,scale=0.6]{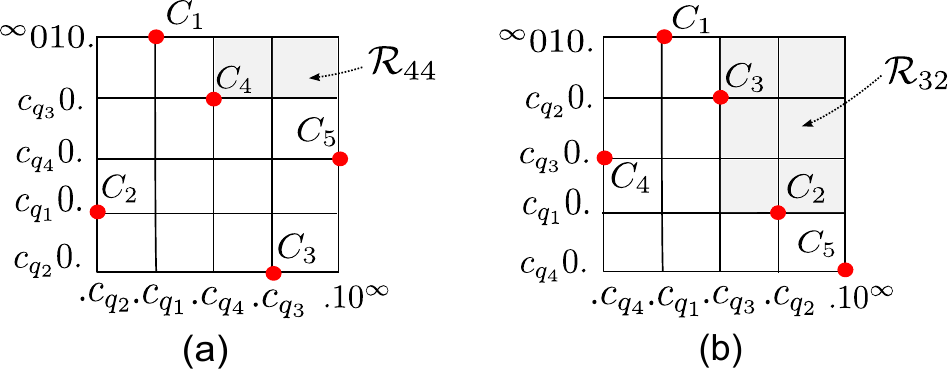}
\caption{The list in $(a)$ is a P-list, but this one in (b) is not.}
\label{fig:prunable}
\end{figure}
Denote by $\mathbb{L}$ the set of 
all the limiting points. It follows that $C_1$ is always a limiting point and we will 
consider that $C_{n+1}={}^{\infty}010c_{q_1}0\cdots0c_{q_n}0\cdot 10^\infty$ is a 
limiting point as well. The limiting points in figure \ref{fig:prunable}(a) 
are $C_1$, $C_4$  and $C_5$ and the filled region is $\mathcal{R}_{44}$. 
The successors of $C_1$ and $C_4$ are 
$C_4$ and $C_5$, respectively. In figure \ref{fig:prunable}(b) the limiting points
are $C_1,C_2$, $C_3$ and $C_5$ and successors of $C_1, C_2$ and $C_3$ are $C_3$, $C_5$ and $C_2$,
respectively.  The filled region is $\mathcal{R}_{32}$.

\begin{definition}[P-list] \normalfont
We say that $\{q_i\}_{i=1}^n$ is a \textit{P-list} if for every limiting point 
$C_i$, with $1\leq i\leq n$, and its successor $C_j$ it holds that $i<j\leq n+1$ and the points $C_{k}$, for all $i<k<j$, 
are not limiting points. When the successor of $C_i$ is $C_{n+1}$, we require 
that $C_k$, for all $i<k\le n$, do not be limiting points. 
\end{definition}
All lists with $2$ or $3$ elements are P-lists. The list in figure \ref{fig:prunable}(a) 
is a P-list. For the list in figure \ref{fig:prunable}(b), since the successor of 
$C_1$ is $C_3$, but $C_2$ is also a limiting point, it follows that it 
is not a P-list.

When $\{q_i\}_{i=1}^n$ is a P-list, there exists a pruning region defined
as follows.
\begin{definition}[Pruning domains] \normalfont
Let $\{q_i\}_{i=1}^n$ be a P-list. For every limiting point $C_i$ and its successor $C_j$
define the domain $D_i$ which is bounded 
by a stable segment $\theta_s^i$ containing the homoclinic point 
 $C_i$
and an unstable segment $\theta_u^i$ passing through the periodic point 
$T_i=(c_{q_i}0\cdots0c_{q_{j-1}}1)^\infty$. The end-points of $\theta_s^i$ and $\theta_u^i$ are 
the heteroclinic points ${}^\infty(c_{q_i}0c_{q_{i+1}}0\cdots c_{q_{j-1}}1)\cdot c_{q_i}0\cdots0c_{q_n}010^\infty$ and $${}^\infty(c_{q_i}0c_{q_{i+1}}0\cdots c_{q_{j-1}}1)c_{q_i}0c_{q_{i+1}}0\cdots c_{q_{j-1}}0\cdot c_{q_i}0\cdots0c_{q_n}010^\infty.$$
\end{definition}
 \begin{definition}[Pruning region]\label{def:front}\normalfont
 The \textit{pruning region of the list $\{q_i\}_{i=1}^n$ or of the decoration 
$w=c_{q_1}0\cdots0c_{q_n}$} is the set
\begin{equation}
\mathcal{P}_w=\bigcup_{C_i \in\mathbb{L}} \operatorname{Int}(D_i).
\end{equation}
\end{definition}

Now one can prove the following result.
\begin{theorem}\label{thm:nbthomo}
Let $\{q_i\}_{i=1}^n$ be a P-list and let $w=c_{q_1}0c_{q_2}\cdots0c_{q_n}$ be its
associated concatenation.  Then 
$\mathcal{C}_{P^w_0}=\{\bold{x}\in \Sigma_2:\operatorname{Orb}(\bold{x},F)\cap\mathcal{P}_w=\emptyset\}$
up to a finite-to-one semi-conjugacy which is injective on the set of non boundary periodic points.
\end{theorem}
\begin{proof}
Let $C_j$ be the successor of $C_1$ and consider the domain  $D_1$ as in definition \ref{def:front} which is bounded by a stable segment 
$\theta^1_s\subset W^s(C_1)$ and an unstable segment $\theta^1_u\subset W^u(T_1)$ 
where $T_1=(c_{q_1}0c_{q_2}0\cdots 0c_{q_{j-1}}1)^\infty$. Let $M_1$ be the period of $T_1$. It is useful to note 
that $D_1$ and $F^{M_1}(D_1)$ have the same behaviour as the domains $D_1$ and $F^{M+1}(D_1)$ for maximal decorations. 
See figure \ref{fig:concatenanbt}(a) for a geometrical explanation of these facts.

To see that $D_1$ is a pruning domain note that the end-points of $\theta^1_s$ are 
the heteroclinic points $A_1={}^\infty(c_{q_1}0c_{q_2}0\cdots c_{q_{j-1}}1)\cdot c_{q_1}0\cdots0c_{q_n}010^\infty$ 
and $$A_2={}^\infty(c_{q_1}0c_{q_2}0\cdots c_{q_{j-1}}1)c_{q_1}0c_{q_2}0\cdots c_{q_{j-1}}0\cdot  c_{q_1}0\cdots0c_{q_n}010^\infty.$$ 
The iterates of $A_1$ and $A_2$ can be of the forms:
$\cdots 0^l11\cdot0^{l'},\cdots 0^l\cdot110^{l'}\cdots, \cdots0^l1\cdot10^{l'}\cdots, \cdots c_{q_k}\cdot0c_{q_{k+1}}\cdots$  
and $\cdots c_{q_k}0\cdot c_{q_{k+1}}\cdots \textrm{ with }1<k<j. $
The four forms to the left are clearly disjoint from $D_1$. Since $C_k$ is 
not a limiting point then the fifth form is disjoint from $\operatorname{Int}(D_1)$.
Thus $F^n(\theta^1_s)\cap \operatorname{Int}(D_1)=\emptyset$ for all $n\geq1$. As in the 
proof of theorem \ref{thm:main}, it implies that $D_1$ is a pruning domain. 

\begin{figure}[h]
\centering
\includegraphics[width=125mm,scale=0.6]{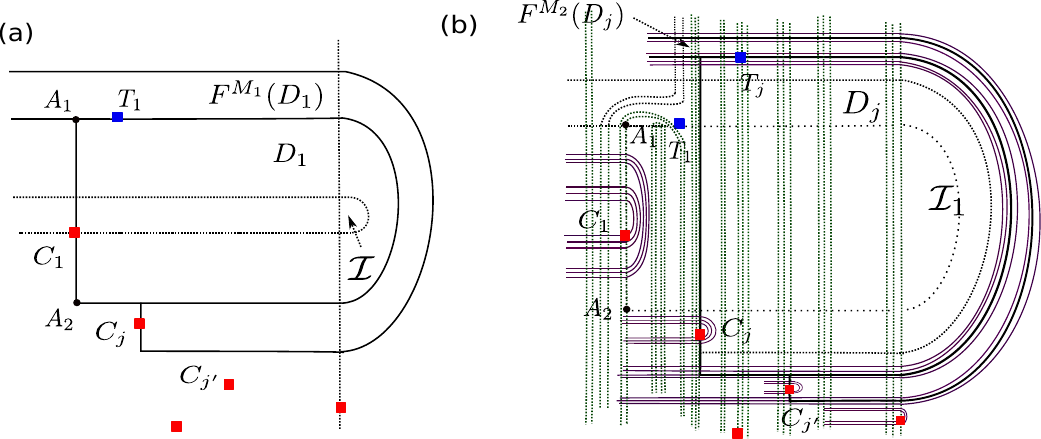}
\caption{(a) The first pruning domain $D_1$ and its iterate $f^{M_1}(D_1)$. 
(b) The second pruning domain defined by the successor $C_j$.}
\label{fig:concatenanbt}
\end{figure}

Let $\psi_1$ be the pruning Smale map associated constructed using $D_1$. Thus $\psi_1$ has a hyperbolic basic set
$K_{\psi_1}$ such that $K_{\psi_1}=\{\bold{x}\in\Sigma_2: F^i(\bold{x})\notin \operatorname{Int}(D_1)\}$
up to a finite-to-one semiconjugacy. 
As in the case of maximal decorations (proposition \ref{prop:pronged}), it follows that
$K_{\psi_1}$ has an unstable periodic point with code $B_{1,j}1B_{1,j}0B_{1,j}1$ where
$B_{1,j}=c_{q_1}0\cdots c_{q_{j-1}}$.
Again as for maximal decorations it can be proved that $\psi_1$ has a bigon $\mathcal{I}_1$ situated to the 
right of $C_j$, and formed by a stable segment of ${}^{\infty}0\cdot 10{}^{\infty}$ and an unstable segment
of $(B_{1,j}1B_{1,j}0B_{1,j}1)^{\infty}$. See figure \ref{fig:concatenanbt}(b).
\begin{figure}[h]
\centering
\includegraphics[width=125mm,scale=0.6]{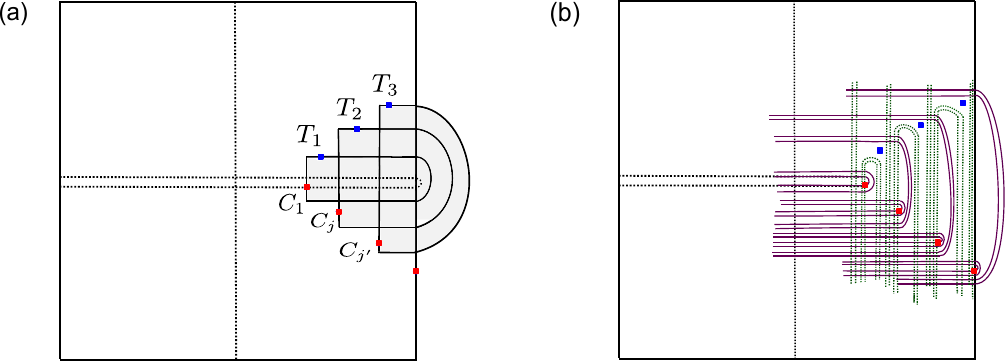}
\caption{Pruning region $\mathcal{P}_w$ associated to a concatenation of NBT orbits 
and the invariant manifolds of its pruning diffeomorphism $\psi$.}
\label{fig:nbtorbit}
\end{figure}

Now the proof continues pruning $\psi_1$ by using the domain $D_j$ which is defined by $C_j$ and its successor
 $C_{j'}$, that is, $D_j$ is bounded $\theta^i_s\subset W^s(C_j)$ and $\theta^j_u\subset W^u(T_j)$ where 
 $T_j=(c_{q_j}0\cdots0c_{q_{j'}}1)^{\infty}$. Since the invariant manifolds of the points of $K_{\psi_1}$
are just deformations of the invariant manifolds of the Smale horseshoe (proposition \ref{prop:topology1})
it is sufficiente to prove that $T_j\in K_{\psi_1}$ because in that case it maintains its invariant 
manifolds and then $D_j$ is still a domain for $\psi_1$. 

 Since $\{q_i\}$ is a P-list, as it was proved for $T_1$, one can conclude that $\operatorname{Orb}(T_j)$ does not intersect
 $\operatorname{Int}(D_1)$ and $\operatorname{Int}(D_j)$. Hence $T_j\in K_{\psi_1}$ and $D_j$ is a pruning domain.
 Thus we can construct the pruning diffeomorphism $\psi_j$ associated to $D_j$. 

Proceeding in the same way with \textit{all} the limiting points we will 
arrive to a pruning diffeomorphism $\psi$ with a basic set $K_{\psi}$ and  without bigons relative to $P^w_0$.
Moreover $K_{\psi}=\{\bold{x}\in \Sigma_2: F^i(\bold{x})\notin\mathcal{P}_w,\forall i\in\mathbb{Z}\}$ up a finite-to-one 
semiconjugacy, where $\mathcal{P}_w=\cup_{C_i\in\mathbb{L}}\operatorname{Int}(D_i)$.
By theorem \ref{thm:first}, $\mathcal{C}_{P_0^w}=K_{\psi}$ up a finite-to-one semiconjugacy which is injective on the 
set of non-boundary periodic points. See figure \ref{fig:nbtorbit}.
\end{proof}

If $\{q_i\}$ is not a P-list then some iterate of some $T_i$ belongs to a pruning 
domain $D_k$. It implies that the unstable manifolds in $D_k$ are deformed before 
of making the pruning isotopy in $D_k$. So we have no more control on these invariant 
segments and then the proof above can not be implemented.

As in the case of maximal decorations, there exists a description of the boundary 
periodic points of $K_{\psi}$.
\begin{proposition}
Let $\psi$ be the pruning map associated to  a P-list $\{q_i\}$ and 
let $C_i$ be a limiting point whose successor is $C_j$. Then the periodic orbit $B$ with code
 $B_{i,j}1B_{i,j}0B_{i,j}1$, where $B_{i,j}=c_{q_j}0\cdots0c_{q_{j-1}}$, is an $u$-boundary
 periodic orbit of $\psi$.  The unstable manifolds of 
 $(B_{i,j}1B_{i,j}0B_{i,j}1)^{\infty}$, $(B_{i,j}0B_{i,j}1B_{i,j}1)^{\infty}$ and $(B_{i,j}1B_{i,j}1B_{i,j}0)^{\infty}$ define a $3$ sides principal region that contains a $3$-pronged periodic 
 singularity with code $B_{i,j}1$.
\end{proposition}

\begin{example} 
Consider the homoclinic orbit $P^w_0$ defined by 
the P-list $\{2/5,2/7,1/3\}$ with code
${}^\infty010c_{2/5}0c_{2/7}0c_{1/3}0\cdot10{}^\infty={}^\infty010101101010011001010010\cdot10{}^\infty.$ By theorem \ref{thm:nbthomo}, its pruning region is formed by the union of the 
interior of the following two pruning domains:
$D_1$ bounded by a segment of the stable manifold of the limiting point
 $C_1={}^\infty010\cdot c_{2/5}0c_{2/7}0c_{1/3}010{}^\infty$ and a segment of the unstable 
 manifold of $T_1=(c_{2/5}0c_{2/7}1)^\infty=(1011010100110011)^\infty$, and $D_2$ bounded 
 by a stable segment of the limiting point $C_3={}^\infty010c_{2/5}0c_{2/7}0\cdot c_{1/3}010{}^\infty$
  and an unstable segment of $T_2=(c_{1/3}1)^\infty=(10011)^\infty$.
These domains are represented in figure \ref{fig:example1}.
\begin{figure}[h]
\centering
\includegraphics[width=135mm,scale=0.7]{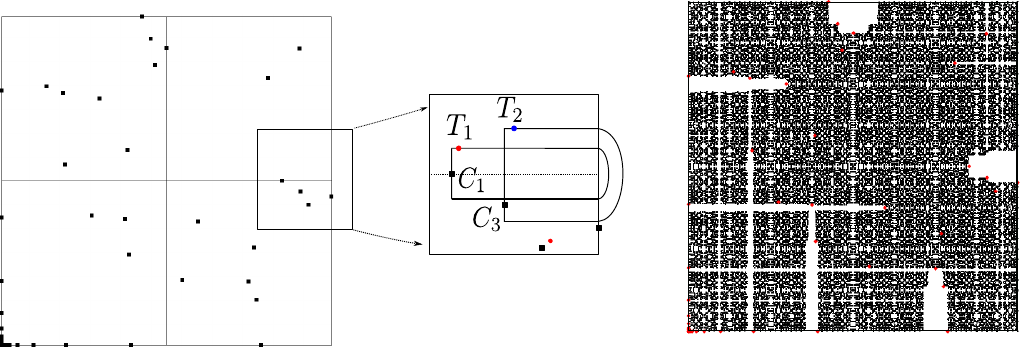}
\caption{Pruning region $\mathcal{P}_w$ associated to the 
homoclinic orbit $P^w_0={}^\infty010c_{2/5}0c_{2/7}0c_{1/3}0\cdot10{}^\infty={}^\infty010101101010011001010010\cdot10{}^\infty$, and the set of orbits forced by it.}
\label{fig:example1}
\end{figure}
\end{example}
A direct consequence of theorem \ref{thm:nbthomo} is a relation between decorations with 
the same \textit{combinatorics}.
\begin{definition}
Two P-lists $\{q_i\}_{i=1}^n$ and $\{q'_i\}_{i=1}^n$ have the same combinatorics if 
$q_i<q_j\Longleftrightarrow q'_i<q'_j$.
\end{definition}
So we can prove the following.
\begin{corollary}
Let $\{q_i\}_{i=1}^n$ and $\{q'_i\}_{i=1}^n$ be two P-lists with the same combinatorics.
If $q_i<q'_i$ for all $i=1,\cdots, n$ then $$P^{c_{q_1}0\cdots0c_{q_n}}_0\geqslant_2P^{c_{q'_1}0\cdots0c_{q'_n}}_0.$$
\end{corollary}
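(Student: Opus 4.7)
The plan is to apply Theorem~\ref{thm:nbthomo}, which identifies $\Sigma_{P^{w}_0}$ with the set of orbits of $\Sigma_2$ disjoint from the pruning region $\mathcal{P}=\bigcup_{C_i\in\mathbb{L}}\mathcal{P}_i$ of the first P-list. Hence it is enough to prove that every iterate of $P^{w'}_0$ lies outside $\mathcal{P}$, where $w'=c_{q'_1}0\cdots 0c_{q'_n}$; the forcing statement will then follow at once from Theorem~\ref{thm:nbthomo}.

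First I would translate the hypothesis into symbolic inequalities. By Hall's Theorem~\ref{thm:hallnbt}, the componentwise inequality $q_i<q'_i$ is equivalent to $c_{q_i}>_1 c_{q'_i}$ in the unimodal order, and the same-combinatorics assumption ensures that the unimodal order among the blocks $c_{q'_i}$ agrees with the unimodal order among the $c_{q_i}$. A direct consequence is that both P-lists share the same indexing of limiting points and each limiting point $C_i$ has the same successor $C_j$ on both sides; this is what makes the pruning regions of the two decorations structurally comparable.

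Next, using the explicit description of $\mathcal{P}_i$ extracted in the proof of Theorem~\ref{thm:nbthomo}, I would record that a point $(s_-.s_+)$ lies in $\mathcal{P}_i$ only when $s_+$ starts with the block $c_{q_i}$ (followed by specific prefixes), and symmetrically for $s_-$. The iterates of $P^{w'}_0$ have forward codes of one of the five shapes $\cdots 0^l 11.0^{l'}\cdots$, $\cdots 0^l.110^{l'}\cdots$, $\cdots 0^l 1.10^{l'}\cdots$, $\cdots c_{q'_k}.0c_{q'_{k+1}}\cdots$, and $\cdots c_{q'_k}0.c_{q'_{k+1}}\cdots$, exactly as in the analysis carried out in the proof of Theorem~\ref{thm:nbthomo}. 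The first four families were shown there to avoid every pruning subdomain associated to any P-list, so the only new work concerns the shifts of the form ${}^\infty 010c_{q'_1}0\cdots 0c_{q'_{k-1}}0.c_{q'_k}0\cdots 0c_{q'_n}010^\infty$.

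The main obstacle is therefore the case analysis of these last iterates against each $\mathcal{P}_i$. For $i=k$ the strict unimodal inequality $c_{q'_k}<_1 c_{q_k}$ pushes the forward code of the iterate strictly below the $s_+$-lower boundary of $\mathcal{P}_k$, so the iterate lies outside $\mathcal{P}_k$; an analogous comparison on the reversed codes handles the $s_-$-range. For $i\neq k$ the iterate's forward code begins with the wrong block, and one uses the same-combinatorics hypothesis to check that this block is also strictly outside the $s_+$-range of $\mathcal{P}_i$. The bookkeeping here is the delicate part: without the matching of successors guaranteed by the same combinatorics, the two rectangles $\mathcal{P}_i$ attached to the two P-lists would not be comparable and the argument would break down. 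Once the case analysis is complete, Theorem~\ref{thm:nbthomo} delivers $P^{c_{q_1}0\cdots 0c_{q_n}}_0\geqslant_2 P^{c_{q'_1}0\cdots 0c_{q'_n}}_0$.
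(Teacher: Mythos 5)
Your proposal is correct and follows essentially the same route as the paper: translate $q_i<q'_i$ via Hall's theorem into the unimodal inequalities $c_{q_i}0>_1c_{q'_i}0$, deduce that $\operatorname{Orb}(P^{w'}_0)$ is disjoint from the pruning region $\mathcal{P}$ of the first list, and conclude by Theorem~\ref{thm:nbthomo}. The paper's own proof is just a terser version of this argument; your additional case analysis of the iterates against each $\mathcal{P}_i$ fills in details the paper leaves implicit.
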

\begin{proof}
Let $\{D_i\}$ and $\{D'_i\}$ be the pruning domains associated to $P^{c_{q_1}0\cdots0c_{q_n}}$ and 
  $P^{c_{q'_1}0\cdots0c_{q'_n}}$, respectively. Since 
$\{q_i\}$ and $\{q'_i\}$ have the same combinatorics then there exists the same number
of pruning domains for each homoclinic orbit. By  hypothesis $q_i<q'_i$ which 
implies that $c_{q_i}0>_1 c_{q'_i}0$. By definition of $D_i$ and $D'_i$ it follows
that $D_i\subset D'_{i}$ and so  $\operatorname{Orb}(P^{c_{q'_1}0\cdots0c_{q'_n}}_0)\cap \mathcal{P}=\emptyset$, 
where $\mathcal{P}$ is the pruning region of $P^{c_{q_1}0\cdots0c_{q_n}}_0$. 
So the conclusion follows from theorem \ref{thm:nbthomo}.
\end{proof}

\subsection{Star homoclinic orbits}\label{sec:star}
In \cite{TanYam3} Yamaguchi and Tanikawa have dealt \textit{star} homoclinic orbits
$P^q_0$ which have as codes ${}^\infty0\cdot c_{q}0{}^\infty={}^\infty0\cdot10^{\mu_1}1^20^{\mu_2}1^2\cdots1^20^{\mu_{m-1}}1^20^{\mu_m}10{}^\infty$, 
with $q\in\widehat{\mathbb{Q}}$. These orbits have received this name because their
 train track types are star \cite{dCarHallBra,KinCol}. Here we will see that they have 
  well-defined pruning regions. To define it, construct the domain $D_q$ bounded by  
  a segment $\theta_s\subset W^s(\sigma^2({}^\infty0\cdot c_{q}0{}^\infty))$
and a segment $\theta_u\subset W^u(1^\infty)$ which intersect at the heteroclinic
points ${}^{\infty}1\cdot 0^{\mu_1-1}1^20^{\mu_2}1^2\cdots1^20^{\mu_{m-1}}1^20^{\mu_m}10{}^\infty$
and ${}^{\infty}10\cdot 0^{\mu_1-1}1^20^{\mu_2}1^2\cdots1^20^{\mu_{m-1}}1^20^{\mu_m}10{}^\infty$. 
See figure \ref{fig:homostar}(a).
Using the properties of $c_q$, we can prove that $\mathcal{P}_w=\operatorname{Int}(D_q)$ is a pruning 
domain associated to $P_0^q$ and  that its pruning diffeomorphism does not have bigons relative to $P^q_0$.  
Hence we have the following result.

\begin{proposition}
$\mathcal{C}_{P^q_0}=\{\bold{x}\in \Sigma_2:\operatorname{Orb}(\bold{x},F)\cap\operatorname{Int}(D_q)=\emptyset\}$ up to a finite-to-one semi-conjugacy which is injective on the set of non boundary periodic points.
\end{proposition}

Noting that $q\geq q'$ if and only if $D_q\subset D_{q'}$, one have the 
following consequence of proposition above. 

\begin{proposition}{\cite[Theorem 5.2.1]{TanYam3}}
For star homoclinic orbits, the forcing order agrees with the order 
 of the rational numbers, that is, $q\geq q'\Longleftrightarrow P^{q}_0\geqslant_2 P_0^{q'}$.
 \end{proposition}

As in proposition \ref{prop:pronged}, we can prove that the basic set of the pruning diffeomorphism $\psi$ 
associated to $D_q$ has a boundary periodic orbit $R_q$ of period $n$ whose code is $(\widetilde{c_q}1)^\infty$, 
where $\widetilde{c_q}$ is $c_q$ dropping the last two symbols. That  orbit $R_q$ 
defines a principal region of $n$ sides which contains the fixed point $1^\infty$.

\begin{example}
Consider $q=2/7$. Then $P^{2/7}_0$ has as code 
${}^\infty0\cdot c_{2/7}0{}^\infty={}^\infty0\cdot100110010{}^\infty$.
Its pruning region $D_{2/7}$ is defined by a stable segment of 
$\sigma^2({}^\infty0\cdot c_{2/7}0{}^\infty)={}^\infty010\cdot0110010{}^\infty$
and an unstable segment of the fixed point $1^\infty$, and it is represented in 
figure \ref{fig:homostar}(a) until its fourth iterate.
In figure \ref{fig:homostar}(b) we have represented the periodic orbits with 
periods less than $17$ which  are forced by $P^{2/7}_0$.
 \begin{figure}[h]
\centering
\includegraphics[width=115mm,scale=0.6]{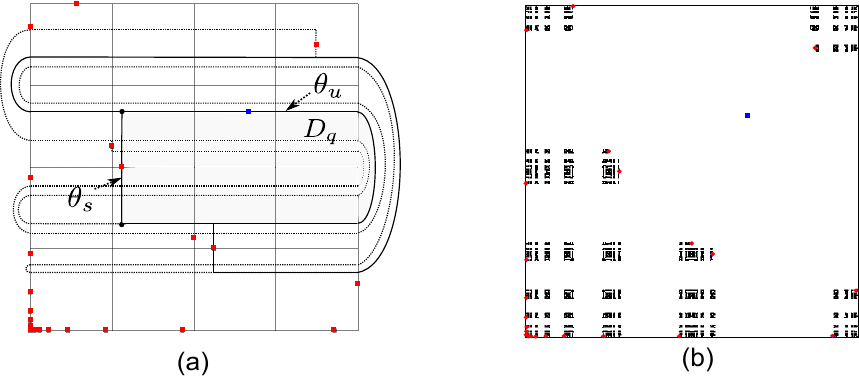}
\caption{(a) Pruning region $D_q$ of the homoclinic orbit 
$P^{2/7}_0={}^\infty0\cdot100110010{}^\infty$. 
(b) The set of orbits of periods less than $17$ which are forced by $P^{2/7}_0$.}
\label{fig:homostar}
\end{figure}
 \end{example}

\begin{example}
As an example of application to a concrete system, here we show a 
H\'enon map with a well-defined pruning region. Consider the H\'enon map
$H_{0,1.11}(x,y)=(-x^2-1.11y,x)$. Numerically the structure of the invariant manifolds
of this map  is similar to the structure of the 
pruning map associated to the star homoclinic orbit $P^{1/5}_0=P_0^{00}={}^\infty010000\cdot10^\infty$. 
See figure \ref{fig:homo}(a).
The pruning domain associated to that
orbit is the domain $D_{1/5}$ depicted in figure \ref{fig:homo}(b) which is bounded
 by $\theta_s\subset W^s({}^\infty010\cdot00010^\infty)$ and 
$\theta_u\subset  W^u(1^\infty)$. Hence the orbits of 
$H_{0,1.11}$, represented in figure \ref{subfighomoc}, correspond to 
the points forced by $P_0^{00}$, 
$\Sigma_{P^{00}_0}=\{\bold{x}\in \Sigma_2:\operatorname{Orb}(\bold{x},F)\cap\operatorname{Int}(D_{1/5})=\emptyset\}$  up a finite-to-one semi-conjugacy.
\begin{figure}[h]
\centering
\subfigure[H\'enon map $H_{0,1.11}(x,y)$]{
\includegraphics[width=43mm,scale=0.6]{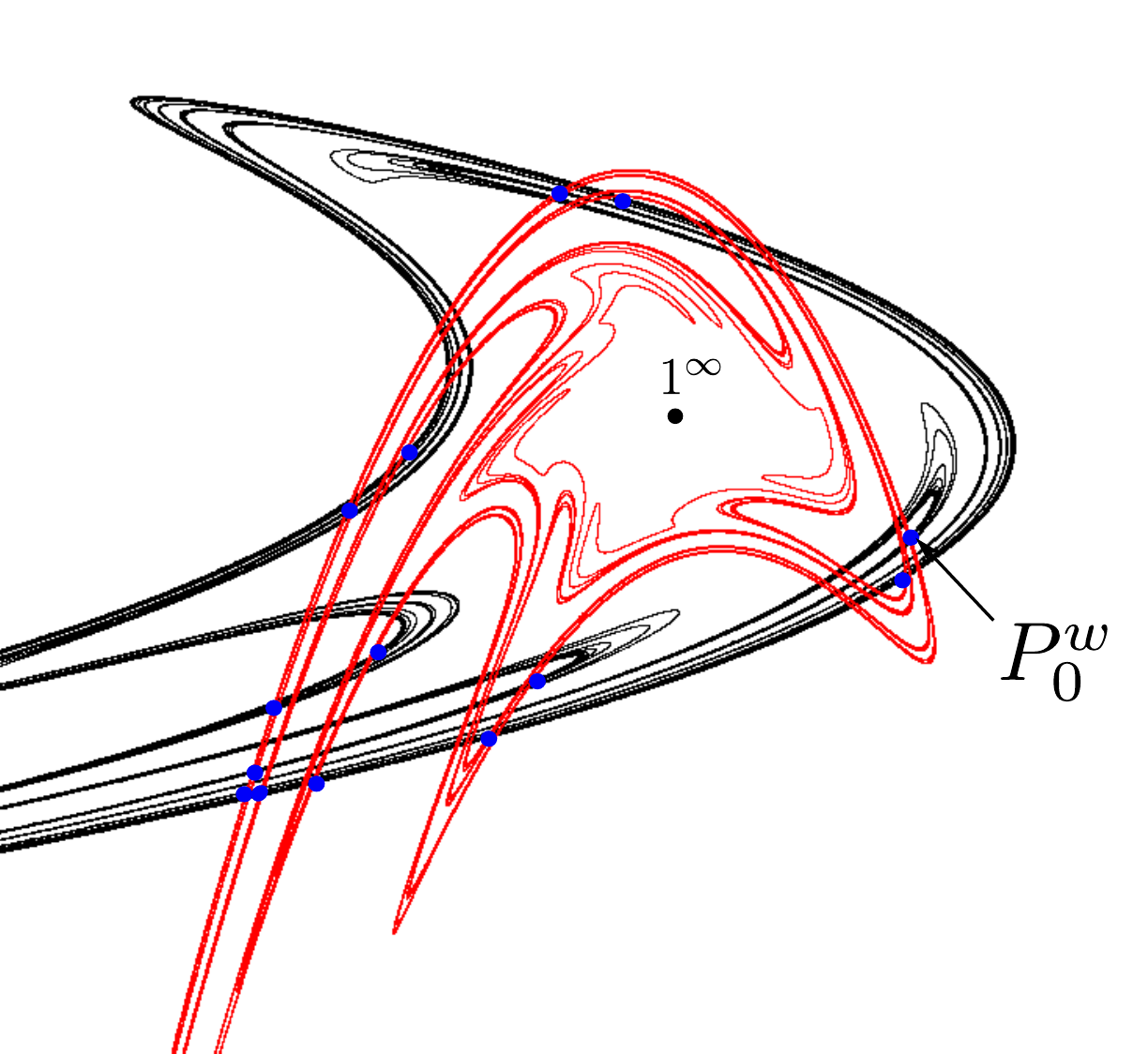}
\label{subfighomoa}
}
\hspace{5pt}
\subfigure[Pruning domain]{
\includegraphics[width=44.5mm,scale=0.6]{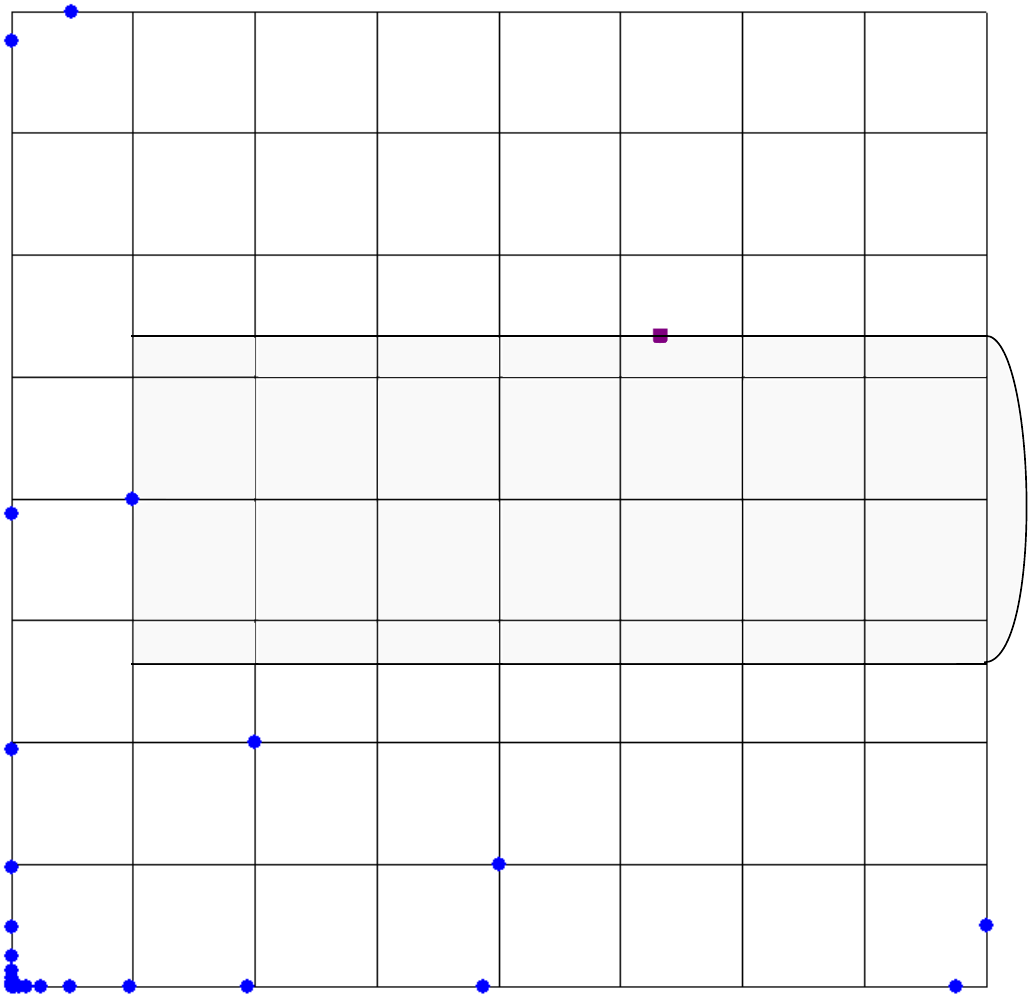}
\label{subfighomob}
}
\hspace{5pt}
\subfigure[Orbits forced by $P^{00}_0$]{
\includegraphics[width=43mm,scale=0.6]{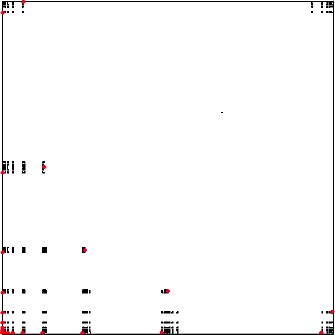}
\label{subfighomoc}
}
\hspace{5pt}
\caption{\small{Homoclinic orbit $P_0^{00}={}^\infty010000.10^\infty$, its pruning region
in the symbol square and the set of orbits  forced by $P^{00}_0$.}}
\label{fig:homo}
\end{figure}
\end{example}

 %\begin{remark}
 In the examples showed in previous subsections, there exists a well-defined 
 pruning region obtained applying the differentiable pruning theorem a finite number of times.
 It is not true for all decorations, in fact, there exist decorations $w$ for which
 their pruning regions are formed by an infinite number of pruning domains. But, even in 
that case, our method says that $\mathcal{C}_{P^w_0}$ is the set of orbits that do not 
intersect those pruning domains except for a certain type of boundary periodic orbits.
 See \cite{MenOrd} for examples and details.
 %\end{remark}

\section{A method for eliminating bigons of a Smale map}\label{sec:pruningbigons}
In the following suppose that $f$ is an  orientation preserving Smale map on $D^2$ 
having a saddle set $K$. From section \ref{background}, to solve the problem of 
finding  orbits forced by  a homoclinic orbit $P$, we just have to 
find a Smale map without bigons relative to $P$.
In this section we will introduce the \textit{pruning method}  that will 
allow us to eliminate the bigons of an arbitrary Smale map relative to a 
homoclinic orbit. This method generalizes the applications, given in section 
\ref{sec:horseshoe}, of the differentiable pruning theorem on homoclinic 
orbits of the Smale horseshoe.

We will prove the following theorem.
\begin{theorem}\label{thm:methodthm}
 Let $f$ be a Smale diffeomorphism  having a wandering or non-wandering 
bigon $\mathcal{I}$  and let $P\subset K$ be an orbit (periodic or homoclinic) 
disjoint from $\mathcal{I}$. 
Then there exists a pruning domain containing  $\mathcal{I}$ and disjoint from $P$. 
\end{theorem}
Next subsections are devoted to the proof of theorem \ref{thm:methodthm}. 
We will prove it only for wandering bigons, since minor modifications can 
state the same conclusion for non-wandering bigons. 
%Briefly, the proof consists of two steps
Now we will give the details.
\subsection{Finding the maximal domain}\label{sec:maximal}
Let $\mathcal{I}$ be a wandering bigon with $\partial\mathcal{I}=\alpha_s\cup \alpha_u$
and let $P$ be a periodic or a homoclinic orbit to a fixed point $p$.  
This section describes how to find the \textit{maximal domain}, relative to $P$, 
which contains $\mathcal{I}$. The ideia consists in finding the 
maximal region $D$ which is a region obtained enlarging the bigon 
$\mathcal{I}$ to a bigger non-wandering bigon. Such region, which is bounded by a 
stable segment $\theta_s$ and an unstable segment $\theta_u$,  contains 
$\mathcal{I}$ and a rectangle $\mathcal{R}$, that is, a region for which 
there exists a homeomorphism $h:[0,1]\times[0,1]\rightarrow \mathcal{R}$ such that:
\begin{itemize}
\item $\mathcal{R}\cap W^s(K)=h(F_s \times [0,1])$, where $F_s$ is a closed
subset of $[0,1]$ and $\{0,1\}\subset F_s$,
\item $\mathcal{R}\cap W^u(K)=h([0,1]\times F_u)$, where $F_u$ is a closed
subset of $[0,1]$ and $\{0,1\}\subset F_u$.
\item $D\cap K=\mathcal{R}\cap K$.
\end{itemize}
Note that there are two possibilities for $D$: (a) $D$ is bounded by $\theta_s\subset  W^s(p_s)$ and 
$\theta_u\subset W^u(p_u)$ where $p_s$ and $p_u$ are $s$-boundary and $u$-boundary periodic
points, respectively and (b) $\theta_{s}\subset W^s(r)$ and $\theta_u\subset W^u(r)$ where
$r$ is both $s$-boundary and $u$-boundary periodic point. See figure \ref{fig:maxdomain}.
\begin{figure}[h]
\centering
\includegraphics[width=115mm,height=55mm,scale=0.6]{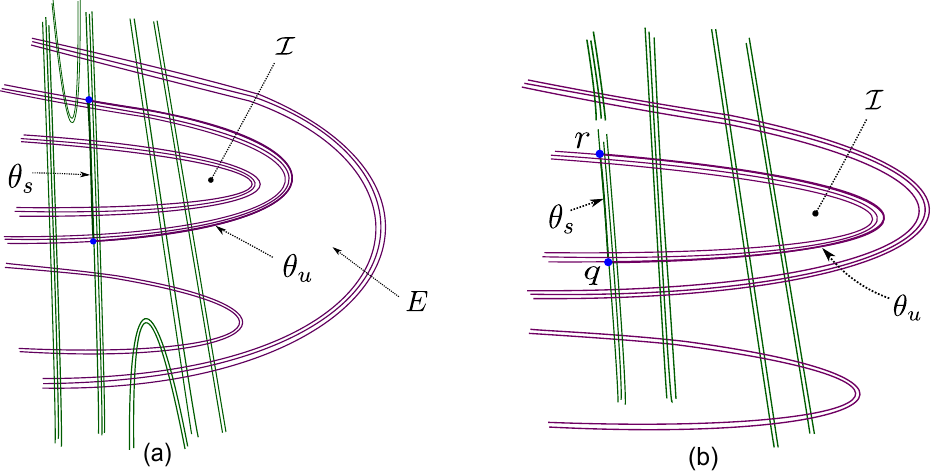}
\caption{The maximal domain $D$ containing $\mathcal{I}$ is bounded 
by the invariant segments $\theta_u$ and $\theta_s$.}
\label{fig:maxdomain}
\end{figure}

If an element $p_j$ of the orbit $P$ lies  in $\operatorname{Int}(D)$ then
we consider  the domain $D_j$ which is bounded by a stable segment $\theta_s\subset W^s(p_j)$ 
(in the case $P$ is periodic) or $\theta_s\subset W^s(p)$ ( in the case that $P$ is homoclinic)
which contains $p_j$. See figure \ref{fig:maxdomainp}. Thus the maximal domain $D$ is the smallest
of all these domain $D_j$.
 \begin{figure}[h]
\centering
\includegraphics[width=115mm,scale=0.6]{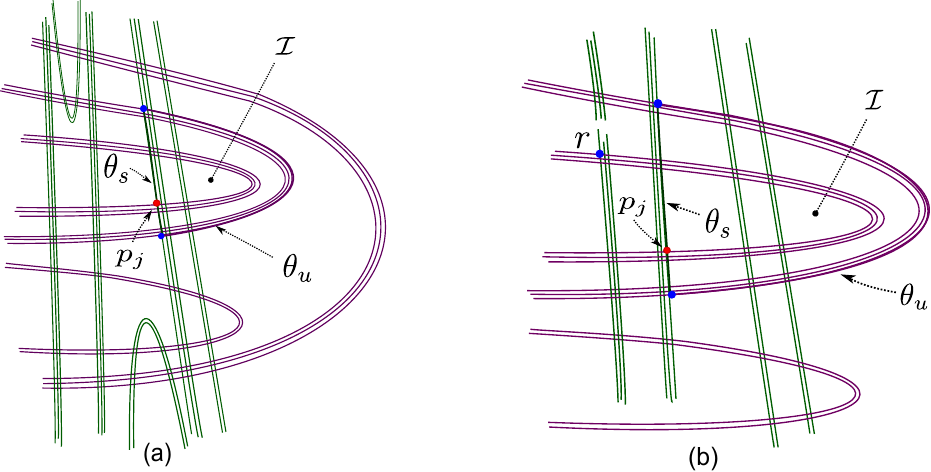}
\caption{The maximal domain $D$ containing $\mathcal{I}$ is bounded 
by the invariant segments $\theta_u$ and $\theta_s$.}
\label{fig:maxdomainp}
\end{figure}
In all the cases or a piece of $\theta_u$ bounds a region $E$ that is 
wandering and whose boundary is formed by $n$ stable segments and $n$ unstable 
segments with $n\ge3$, or $\theta_u$ contains a boundary period point. See figure 
\ref{fig:maxdomain}.

If the maximal domain is a pruning domain then theorem \ref{thm:methodthm} is 
proved. Thus it remains to consider the case when $D$ is not a pruning domain.
\subsection{Defining a pruning domain from $D$}
 If the maximal region $D$ is not  a pruning domain one must prove that it 
is always possible to decrease $\theta_s$ until a sub-segment 
$\theta'_s\subset\theta_s$ which defines a pruning domain $D'$, that 
contains $\mathcal{I}$,
whose boundary consists of $\theta'_s$ and an unstable segment $\theta'_u$. 
%In lemma \ref{lem:nondegenerate} we will prove that $\theta'_u$ contains a 
%periodic point $q_u$ which is not a boundary point. It implies that 
%$D'$ is \textit{non-degenerated}, that is, $\operatorname{Int}(D')\cap K\neq\emptyset$.

We need the following lemma.
\begin{lemma}\label{lem13}
If  $f^n(\theta_s)\cap \operatorname{Int}(D)=\emptyset$, for all $n\ge1$ then
$D$ satisfies pruning  conditions for $f$.
\end{lemma}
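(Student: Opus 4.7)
The hypothesis supplies one of the two pruning conditions, so the task is to prove $f^{-n}(\theta_u)\cap \operatorname{Int}(D)=\emptyset$ for every $n\ge 1$. My strategy is to show that $f^{-n}(\theta_u)$ is disjoint from $\partial D=\theta_s\cup \theta_u$; then connectedness of the arc, together with the fact that its endpoints $f^{-n}(A), f^{-n}(B)$ lie outside $\bar D$, will force $f^{-n}(\theta_u)$ to remain in the complement of $\bar D$.

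Disjointness from $\theta_s$ is a transversality argument. Since $\theta_s\subset W^s(p_s)$ and $\theta_u\subset W^u(p_u)$, any intersection of $f^n(\theta_s)$ with $\theta_u$ is transverse. A transverse crossing would place a small sub-arc of $f^n(\theta_s)$ on the $\operatorname{Int}(D)$-side of $\theta_u$, contradicting the hypothesis. Hence $f^n(\theta_s)\cap \theta_u=\emptyset$, equivalently $f^{-n}(\theta_u)\cap \theta_s=\emptyset$, for every $n\ge 1$.

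Disjointness from $\theta_u$ splits into two cases. If $n$ is not a multiple of the period of $p_u$, then $W^u(f^{-n}(p_u))$ and $W^u(p_u)$ are distinct unstable manifolds of an Axiom A map and are therefore disjoint, so $f^{-n}(\theta_u)\cap \theta_u=\emptyset$ immediately. If $n$ is a multiple of the period of $p_u$, both arcs live in $W^u(p_u)$; here I would invoke Corollary \ref{cor:1} applied to the arc $\delta$ produced by the maximal-domain construction in Section \ref{sec:maximal}. Since $\theta_u$ and $\delta$ are comparable sub-arcs of $W^u(p_u)$, the disjointness $f^{-n}(\delta)\cap \delta=\emptyset$ transfers to $f^{-n}(\theta_u)\cap \theta_u=\emptyset$.

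Combining the two steps gives $f^{-n}(\theta_u)\cap \partial D=\emptyset$. Together with the observation that the endpoints $f^{-n}(A), f^{-n}(B)$ lie outside $\bar D$ (by the maximality of $D$ relative to $P$ and the same disjointness-of-unstable-manifolds argument), the connected arc $f^{-n}(\theta_u)$ cannot enter $\operatorname{Int}(D)$. The main obstacle is the period-divisible case of the second step, where both arcs share the ambient unstable manifold $W^u(p_u)$ and their ambient disjointness must be extracted from Corollary \ref{cor:1} together with the precise construction of $D$; the transversality argument for $\theta_s$ and the final topological conclusion are routine.
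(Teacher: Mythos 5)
Your first step (using transversality to upgrade the hypothesis $f^n(\theta_s)\cap\Int(D)=\emptyset$ to $f^{-n}(\theta_u)\cap\mathring{\theta_s}=\emptyset$) is exactly what the paper does. The second half, however, has a genuine gap. You try to prove the stronger statement $f^{-n}(\theta_u)\cap\theta_u=\emptyset$ by transferring Corollary \ref{cor:1} from $\delta$ to $\theta_u$, but $\delta$ is not a sub-arc of $\theta_u$: by construction $\delta=\alpha_u'\cup\alpha_u\cup\alpha_u''$ is the extension of the unstable side $\alpha_u$ of the bigon $\mathcal{I}$, an arc lying strictly inside $D$ and in general in a different unstable leaf from $\theta_u\subset W^u(p_u)$ (indeed Lemma \ref{lem11} shows $\delta$ contains no boundary periodic point, whereas $p_u$ is one). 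So $f^{-n}(\delta)\cap\delta=\emptyset$ says nothing about $f^{-n}(\theta_u)\cap\theta_u$. Worse, your intermediate claims can simply be false: in Case II of the maximal-domain construction, $\theta_u$ and $\theta_s$ meet at the $u$-boundary periodic point $p$ itself, so for $n$ a multiple of the period of $p$ one has $p\in f^{-n}(\theta_u)\cap\theta_u$ and $f^{-n}(A)=A\in\overline{D}$ for the corner $A=p$. Your final endpoint claim ("$f^{-n}(A),f^{-n}(B)$ lie outside $\overline{D}$") is asserted without proof and fails in that case.

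The paper avoids all of this by not aiming at disjointness from $\theta_u$. Since $f^{-n}(\theta_u)$ is an unstable arc, it cannot cross $\theta_u$ transversally (distinct unstable leaves are disjoint, and arcs in the same leaf do not cross), and by your step one it cannot cross $\mathring{\theta_s}$; hence if it meets $\Int(D)$ at all it is entirely contained in $\Int(D)$. But $\gamma_u\subset\theta_u$ bounds the complementary region $\mathcal{E}$, disjoint from $W^s(K)\cup W^u(K)$, so containment would force $f^{-n}(\mathcal{E})\subset\Int(D)$, contradicting the maximality used to define $D$ in Section \ref{sec:maximal}. You would need to replace your second and third steps with an argument of this kind (or otherwise rule out the whole arc sitting inside $\Int(D)$), since disjointness of $f^{-n}(\theta_u)$ from $\partial D$ alone, even if it held, does not by itself exclude that configuration.
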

\begin{proof}
By hypothesis $f^n(\mathring{\theta_s})\cap\theta_u=\emptyset$ for
 all $n\ge1$ then $f^{-n}(\theta_u)\cap\mathring{\theta_s}=\emptyset$
  for all $n\ge1$. So if there exists $n\ge 1$ such that $f^{-n}(\theta_u)\cap\operatorname{Int}(D)\neq\emptyset$ then $f^{-n}(\theta_u)\subset\operatorname{Int}(D)$. This is not possible 
since a piece of $\theta_u$  is in the boundary of $E$ or $\theta_u$ contains 
a boundary periodic point $r$ and, then $f^{-n}(E)\subset\operatorname{Int}(D)$ or $f^{-n}(r)\subset\operatorname{Int}(D)$, which is a contradiction with the definition of $D$. 
So $D$ satisfies the pruning conditions of
 definition \ref{def:pruningconditions}.
\end{proof}

By lemma \ref{lem13} we just need to study the case when
there exist positive integers $N_i$ such that
 $f^{N_i}(\theta_s)\cap \operatorname{Int}(D)\neq\emptyset$.

\begin{lemma}\label{prop:inter1}
Let $D$ be the maximal domain and an integer $N>1$. If 
$f^{N}(\theta_s)\cap\operatorname{Int}(D)\neq\emptyset$
 then $f^{N}(\mathring{\theta_s})\cap \theta_u\neq\emptyset$ 
 or $\alpha_s\subset f^{N}(\theta_s)$.
\end{lemma}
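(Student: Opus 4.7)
My plan is to organize the argument around a dichotomy on the stable leaf containing $f^N(\theta_s)$: in a surface, two distinct stable leaves are disjoint, so $f^N(\theta_s)$ either lies in the leaf $W^s(p_s)$ (together with $\theta_s$) or in a stable leaf disjoint from $W^s(p_s)$. In each situation the task is to locate where $f^N(\theta_s)$ enters $D$ and then exploit the maximality of $D$ relative to $P$ established in Section \ref{sec:maximal}.

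First I handle the case when $f^N(\theta_s)$ lies in a stable leaf different from $W^s(p_s)$. Then $f^N(\theta_s)\cap\theta_s=\emptyset$, so any point of $f^N(\theta_s)\cap\partial D$ must lie on $\theta_u$. Since $f^N(\theta_s)$ has a point in $\operatorname{Int}(D)$ it must meet $\partial D$, and so must meet $\theta_u$. If the meeting point lies in $f^N(\mathring{\theta_s})$, conclusion (a) follows at once. The only alternative is that $\theta_u$ is met solely at the endpoints $f^N(A),f^N(B)$, where $A,B=\theta_s\cap\theta_u$; in that sub-case $f^N(\theta_s)$ together with a sub-arc of $\theta_u$ bounds a simply connected region contained in $D$ and disjoint from $K$, i.e.\ a new bigon strictly smaller than $D$, which I rule out by contradiction with the maximality of $D$ and with Corollary \ref{cor:1}, since such a configuration would permit one to enlarge the rectangle over $\gamma_s$ in the sense of Section \ref{sec:maximal}.

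Next I treat the case $f^N(\theta_s)\subset W^s(p_s)$. Here $f^N(\theta_s)$ and $\theta_s$ are two sub-arcs of the same one-dimensional leaf, which I parametrize so that both become intervals on $\mathbb{R}$. Under the negation of (a), $f^N(\mathring{\theta_s})\cap\theta_u=\emptyset$, so $f^N(\theta_s)$ never crosses $\theta_u$ in its interior; consequently, the portion of $f^N(\theta_s)$ that enters $\operatorname{Int}(D)$ does so by overlapping $\theta_s$ along the common leaf. The maximality of $D$ (built precisely so that $\theta_s$ is the sub-arc of $W^s(p_s)$ bounding $D$, with $\alpha_s\subset\theta_s$ sitting away from the end of $\theta_s$ closest to $p_s$) forces any nontrivial overlap produced by the stable contraction of $f^N$ to contain $\alpha_s$, yielding conclusion (b).

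The main obstacle I expect is the endpoint sub-case of Case~1, i.e.\ turning the informal picture ``$f^N(\theta_s)$ enters $D$ only at its endpoints on $\theta_u$ with interior in $\operatorname{Int}(D)$'' into a precise contradiction with the maximality of $D$ relative to $P$; this requires tracking how the sub-bigon thereby produced interacts with the extremal $u$-arc construction and with Lemma \ref{lem11} and Corollary \ref{cor:1}. A secondary technical point, in Case~2, is justifying from the construction of $\theta_s$ in Section \ref{sec:maximal} that $\alpha_s$ is positioned so that every nontrivial overlap $f^N(\theta_s)\cap\theta_s$ necessarily covers $\alpha_s$; this will use that extending $\theta_s$ further toward $p_s$ along the leaf would swallow an extra rectangle, contradicting the maximal choice of $D$.
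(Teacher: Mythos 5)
There is a genuine gap, and it sits at the pivot of your Case 1: the claim that if $f^{N}(\theta_s)$ has a point in $\operatorname{Int}(D)$ then it must meet $\partial D$ (hence $\theta_u$) is false, and the configuration it wrongly excludes is precisely the main case of the paper's proof. The endpoints $A,B=\theta_s\cap\theta_u$ lie in $K$, and $\operatorname{Int}(D)$ contains plenty of $K$ (recall $D$ is a maximal rectangle over $\gamma_s$ together with the gap $S$), so the entire arc $f^{N}(\theta_s)$, endpoints included, can sit inside $\operatorname{Int}(D)$ within the lamination $W^s(K)\cap D$ without touching $\theta_u$, without overlapping $\theta_s$, and without meeting $\mathcal{I}$; in that situation your argument yields neither conclusion. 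The paper rules this out by a mechanism your proposal never invokes: if $f^{N}(\mathring{\theta_s})\subset(W^s(K)\cap D)\setminus\gamma_s$, then $f^{N}(\theta_s)$ can be continued through the unstable leaves of $W^u(K)\cap D$ (holonomy inside the rectangle), and taking the $N$-th backward iterate of this continuation produces a stable arc $s$-related to $\gamma_s$ lying beyond $\theta_s$, contradicting the maximality of the supremum $t_u$ from Section \ref{sec:maximal}; the same works when $f^{N}(\theta_s)\subset\gamma_s\setminus\mathcal{I}$, and the only surviving sub-case, $f^{N}(\theta_s)\cap\mathcal{I}\neq\emptyset$, forces $\alpha_s\subset f^{N}(\theta_s)$. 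That pull-back/extension step is the actual content of the lemma.

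Your Case 2 has the same defect in another guise. The image $f^{N}(\theta_s)$ always lies in the stable manifold of the orbit of $p_s$, but lying on the same immersed leaf as $\theta_s$ does not mean overlapping it as a subset of the disk: stable leaves of $K$ are dense, and $f^{N}$ carries $\theta_s$ to a contracted arc further along the leaf toward $p_s$, so $f^{N}(\theta_s)$ and $\theta_s$ are in general disjoint intervals of the leaf, and $f^{N}(\theta_s)$ can re-enter $\operatorname{Int}(D)$ at a place unrelated to $\theta_s$. Hence the assertion that ``the portion entering $\operatorname{Int}(D)$ does so by overlapping $\theta_s$'' is unjustified, and the subsequent step from a nontrivial overlap to containment of $\alpha_s$ never gets off the ground. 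The correct dichotomy is not ``same leaf versus different leaf'' but the paper's: either $f^{N}(\mathring{\theta_s})$ is contained in $W^s(K)\cap D$, in which case one must run the maximality argument above, or it is not, in which case (and only then) one concludes $f^{N}(\mathring{\theta_s})\cap\theta_u\neq\emptyset$.
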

\begin{proof}
Let $\gamma_s$ be the stable segment such that $\alpha_s\subset\gamma_s$ 
and the end-points of $\gamma_s$ are included in $\theta_u$.
If $f^{N}(\mathring{\theta_s})\subset(W^s(K)\cap D)\setminus\gamma_s$, then
$f^{N}(\theta_s)$ can be continued through the leaves of $W^u(K)\cap D$.
 Finding the $N$-backward iterate of these continuation, we can see
  that the rectangle defining $D$ can be extended to a bigger one. That is a 
contradiction with the definition of $D$.  The same argument works 
  if $f^{N}(\theta_s)\subset\gamma_s\setminus\alpha_s$.
The remaining case is when $f^{N}(\theta_s)\cap\mathcal{I}\neq\emptyset$ 
that implies $\alpha_s\subset f^{N}(\theta_s)$.

If $f^N(\mathring{\theta_s})$ is not included in $W^s(K)\cap D$ then 
$f^N(\mathring{\theta_s})\cap\theta_u\neq\emptyset$.
\end{proof}

\begin{figure}[h]
\centering
\includegraphics[width=80mm,scale=0.6]{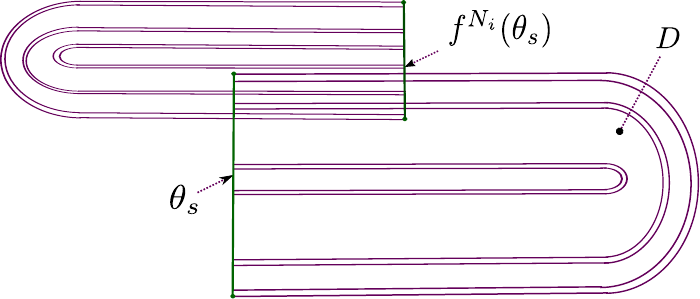}
\caption{If the maximal domain is not a pruning domain, an iterate $f^{N_i}(\theta_s)$ of $\theta_s$
intersects $\operatorname{Int}(D)$ as in the Figure.}
\label{fig:intersection}
\end{figure}
Actually there is only a finite number of positive integers satisfying conditions of lemma above.
\begin{lemma}\label{lem:finite}
There are a finite number of positive integers 
$N_1,N_2,...,N_l$ such that  $f^{N_i}(\theta_s)\cap\operatorname{Int}(D)\neq\emptyset$, 
for all $i=1,...,l$. 
\end{lemma}
\begin{proof}
Suppose that there exists a sequence $\{N_i\}_{i\in\mathbb{N}}$ of positive integers 
such that  $f^{N_i}(\theta_s)\cap\operatorname{Int}(D)\neq\emptyset$. 
See figure \ref{fig:intersection}. 
 By lemma \ref{prop:inter1} and since $\operatorname{diam}(f^{N_i}(\theta_s))$ goes to
 $0$ when $i$ goes to $\infty$, it follows that only a finite number of the $N_i$
 satisfy $f^{N_i}(\theta_s)\supset\alpha_s$. So we just have to consider the 
case when  $f^{N_i}(\mathring{\theta_s})\cap\theta_u\neq\emptyset$. 
Since $\theta_s\subset W^s(p_s)$, there exist a subsequence $\{N'_k\}\subset\{N_i\}$ and 
a point $p_s^0$ of the orbit of $p_s$ such that $\lim f^{N'_i}(\theta_s)$ goes to $p_s^0$
when $i$ goes to $\infty$, and then one have that $p_s^0\in\theta_u$. Hence $p_s^0$ is a periodic
 point in $W^u(p_u)$ which  is only possible if $p_s^0=p_u\in \theta_s\cap\theta_u$ and $p_s^0=p_s$. 
 But in this case  $f^{n}(\theta_s)\cap\operatorname{Int}(D)\neq\emptyset$ 
  for a finite number of  positive integers. That is a contradiction.
\end{proof}

For the iterates of lemma \ref{lem:finite}, we have:
\begin{lemma}\label{prop:4}
For every $N_i$ there exists a subdomain $D_i\subset D$ such that
 $\partial D_i=\theta_{s,i}\cup\theta_{u,i}$ where 
 $\theta_{s,i}\subset\theta_s$, $f^{N_i}(\theta_{s,i})\cap\operatorname{Int}(D_i)=\emptyset$
and $\theta_{u,i}$ is an unstable leaf included in $D$.
\end{lemma}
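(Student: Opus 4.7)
The plan is to construct $D_i$ as a sub-bigon inside $D$, whose stable side $\theta_{s,i}$ is a sub-arc of $\theta_s$ and whose unstable side $\theta_{u,i}$ is a piece of $f^{-N_i}(\theta_u)$ lying in $D$. The intuition is that, because $f^{N_i}(\theta_s)$ crosses into $\operatorname{Int}(D)$ through $\theta_u$ (by Lemma \ref{prop:inter1}), the preimage arc $f^{-N_i}(\theta_u)$ meets $\theta_s$ in the points where $f^{N_i}(\theta_s)$ exits via $\theta_u$; these intersection points provide the endpoints at which we cut $D$ to produce the desired smaller bigon.

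Concretely, I would first pick a connected component $\sigma$ of $f^{N_i}(\theta_s)\cap\overline{D}$ whose interior meets $\operatorname{Int}(D)$. By Lemma \ref{prop:inter1}, such a $\sigma$ exists, and in the generic case its two endpoints $p_1,p_2$ lie on $\theta_u$. Setting $q_j:=f^{-N_i}(p_j)\in\theta_s$ for $j=1,2$, I would then define $\theta_{s,i}$ as the sub-arc of $\theta_s$ joining $q_1$ to $q_2$ and $\theta_{u,i}$ as the connected component of $f^{-N_i}(\theta_u)\cap\overline{D}$ joining $q_1$ to $q_2$; this is a segment of an unstable leaf contained in $D$. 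The subdomain $D_i$ is then the bigon bounded by $\theta_{s,i}\cup\theta_{u,i}$. The remaining case, $\alpha_s\subset f^{N_i}(\theta_s)$, is handled analogously using the endpoints of $\alpha_s$ on $\alpha_u$ in place of $p_1$ and $p_2$.

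To verify $f^{N_i}(\theta_{s,i})\cap\operatorname{Int}(D_i)=\emptyset$, observe that $f^{N_i}(\theta_{s,i})=\sigma$ is a stable arc whose endpoints lie on $\theta_u$, and $\theta_u$ lies on the opposite side of $\theta_{u,i}$ from $\theta_{s,i}$ inside $D$. Thus $\sigma$ belongs to the closure of $D\setminus D_i$ unless it crosses $\theta_{u,i}$ into $\operatorname{Int}(D_i)$; the latter would contradict the maximality of $D$ established in Section \ref{sec:maximal} (any such additional crossing would allow $D$ to be enlarged by extending the local rectangle structure, as in the proof of Lemma \ref{prop:inter1}).

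The main obstacle I anticipate is showing that the component of $f^{-N_i}(\theta_u)\cap\overline{D}$ joining $q_1$ to $q_2$ is a single well-defined arc lying in $D$, rather than a union of pieces that exit and re-enter $D$ between $q_1$ and $q_2$. Ruling this out will rely on Corollary \ref{cor:1} (applied to $\theta_u$ in place of $\delta$) together with the maximality property of $D$, which jointly prevent the preimage arc $f^{-N_i}(\theta_u)$ from pathologically re-entering $D$ along the segment from $q_1$ to $q_2$.
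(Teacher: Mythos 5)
Your construction is genuinely different from the paper's, and it has a gap at the decisive step. The paper does not cut $D$ at the preimage of a crossing of $f^{N_i}(\theta_s)$ with $\theta_u$. Instead it reduces the problem to one dimension: it parametrizes $\theta_s$ by the signed unstable Margulis measure $h\colon\theta_s\to[-a,a]$ centred at a point $q\in\theta_c$, projects $f^{N_i}$ back to $\theta_s$ along the unstable leaves of $D$ to obtain a one-dimensional map $\xi\colon[-a,a]\to J$, and takes the \emph{biggest} symmetric interval $I=[-b,b]$ with $\xi(\mathring{I})\cap\mathring{I}=\emptyset$; the points of $K$ over $h=\pm b$ and the leaf of $W^u(K)\cap D$ through them then bound $D_i$. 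The extremal choice of $I$ is exactly what delivers $f^{N_i}(\theta_{s,i})\cap\operatorname{Int}(D_i)=\emptyset$.

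Your verification of that property is where the argument breaks. You claim $\sigma=f^{N_i}(\theta_{s,i})$ can only enter $\operatorname{Int}(D_i)$ by crossing $\theta_{u,i}\subset f^{-N_i}(\theta_u)$, and that such a crossing ``would contradict the maximality of $D$.'' But $\sigma\cap\theta_{u,i}\neq\emptyset$ amounts to $f^{2N_i}(\theta_{s,i})\cap\theta_u\neq\emptyset$, a statement about a higher iterate which the maximality of $D$ (a constraint on the family of $s$-related arcs defining the rectangle, established in Section \ref{sec:maximal}) does not exclude. This is precisely the situation the paper's supremal interval is built to absorb: if the first cut still returns into the smaller domain, one must cut again, and taking the biggest $I$ with $\xi(\mathring{I})\cap\mathring{I}=\emptyset$ performs all the cuts at once. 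Two further problems: (i) your proposed resolution of the acknowledged obstacle invokes Corollary \ref{cor:1} ``applied to $\theta_u$ in place of $\delta$,'' but that corollary rests on Lemma \ref{lem11}, which is specific to $\delta$; $\theta_u$ lies in $W^u(p_u)$ for a boundary periodic point $p_u$, and in Case II of the construction $p_u\in\theta_u$, so the analogue for $\theta_u$ is false in general. (ii) Your $\theta_{s,i}$ is the preimage of a single crossing component and need not contain $\theta_c$, so your $D_i$ need not contain the bigon $\mathcal{I}$ and the various $D_i$ need not be nested; both properties come for free from the paper's symmetric interval $[-b,b]$ and are used in the corollary immediately following the lemma.
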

\begin{proof}
Since $f^{N_i}(\theta_s)\cap\operatorname{Int}(D)\neq\emptyset$ one see that
the projection of $f^{N_i}(\theta_s)$ along the unstable leaves of $D$ intersects
$\theta_s$. Thus one can decrease $\theta_s$ until a small segment $\theta_{s,i}$ such that
(a) $\theta_{s,i}$ has its end-points in the same unstable leaf $\theta_{u,i}$ and 
(b) $f^{N_i}(\theta_{s,i})$ projects along the unstable leaves and  intersects 
$\theta_{s,i}$ at only one point. See figure \ref{fig:subdomain}.
\end{proof}
\begin{figure}[h]
\centering
\includegraphics[width=85mm,scale=0.6]{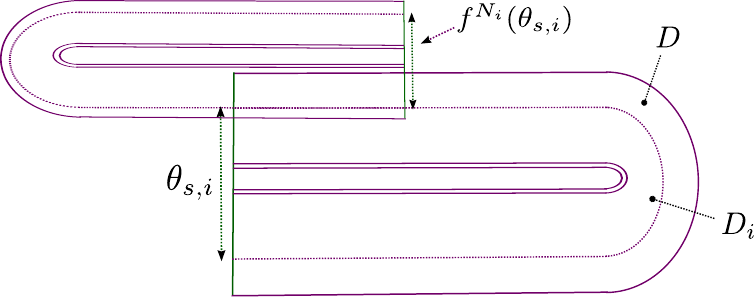}
\caption{Decreasing the domain $D$ to another $D_i$.}
\label{fig:subdomain}
\end{figure}

With the preceding lemmas we can prove now theorem \ref{thm:methodthm}.
\begin{proof}[Proof of theorem \ref{thm:methodthm}]
If the maximal domain $D$ of section \ref{sec:maximal} satisfies the 
pruning conditions then theorem \ref{thm:methodthm} is proved.
If $D$ does not satisfy pruning conditions then by, lemma \ref{lem:finite},  $f^N(\theta_s)\cap\operatorname{Int}(D)=\emptyset$ for all positive integer $N\ge1$ except 
for a finite number of positive integers $N_i$ with $i=1,\cdots,l$. 
From lemma \ref{prop:4} for each $N_i$ there exists
a subdomain $D_i\subset D$ with boundary $\theta_{s,i}\cup\theta_{u,i}$ such that
$\theta_{s,i}\subset\theta_s$ and $\theta_{u,i}$ is an unstable segment joining 
the end-points of $\theta_{s,i}$.

Since $\theta_{s,i}\subset\theta_{s,j}$ or $\theta_{s,j}\subset\theta_{s,i}$, 
for all $i,j\in\{1,..,l\}$, the domains $\{D_i\}_{i=1}^l$ can be organized by 
inclusion.  Let $i'$ be the positive integer such that $\theta_s':=\theta_{s,i'}$ is the smallest segment in $\{\theta_{s,i}\}$, so the domain $D':=D_{i'}$ is the smallest of all
 the domains $D_1,\cdots,D_l$. Let $\theta_{u}'$ be the unstable segment joining the end-points  of $\theta'_{s}$. So $\partial D'=\theta_s'\cup \theta_u'$ and 
 $f^{N_i}(\theta'_s)\cap\operatorname{Int}(D')=\emptyset$, for all $i=1,\cdots,l$.
Hence
\begin{equation}\label{eq:stable}
 f^n(\theta'_s)\cap\operatorname{Int}(D')=\emptyset, \textrm{  for all } n\ge1.
\end{equation}
Therefore for proving that $D'$ is a pruning domain it is sufficient to prove that 
 \begin{equation}\label{eqbackward}
f^{-n}(\theta'_u)\cap\operatorname{Int}(D')=\emptyset, \textrm{ for all }n\ge1.
 \end{equation}
  By construction of $D'$, it follows that $f^{N_{i'}}(\theta'_u)\cap \theta'_{u}\neq\emptyset$
and so by \cite[Theorem 1.2]{Ply}, $\theta_u'$ is included in the unstable
  manifold of a periodic point $q_u$. 
	\begin{figure}[h]
\centering
\includegraphics[width=100mm,scale=0.6]{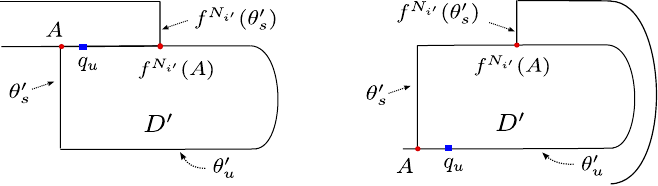}
\caption{The unstable piece $\theta'_u$ contains a periodic point $q_u$.}
\label{fig:periodic}
\end{figure}
If $A$ is the end-point of $\theta'_s$ such that $f^{N_{i'}}(A)\in\theta'_u$ then, 
since $f$ preserves orientation, one have that $f^{N_{i'}}(\theta'_u)$ contains $A$. 
See figure \ref{fig:periodic}. Decreasing $\theta'_u$ it follows that there 
exists a periodic point in $\theta'_u$ which, by uniqueness, has to be $q_u$.
Hence $q_u\in\theta'_u$.

Now we will prove that $D'$ satisfies condition (\ref{eqbackward}).
Since  $f^n(\mathring{\theta'_s})\cap\theta'_u=\emptyset$, for all $n\ge1$, then
 $\mathring{\theta'_s}\cap f^{-n}(\theta'_u)=\emptyset$, for all $n\ge1$. It implies that  
 if $f^{-n}(\theta'_u)\cap\operatorname{Int}(D')\neq\emptyset$ then 
 $f^{-n}(\theta'_u)\subset\operatorname{Int}(D')$.
Suppose that there are positive integers $N$ such that $f^{-N}(\theta'_u)\subset\operatorname{Int}(D')$. 
Since $\theta'_u$ has a periodic point $q_u$ of period $N_{i'}$ and $f^{-1}$ is contracting on 
$\theta'_u$, there is a finite number
 of positive integers $M_{i_j}$ with $M_{i_j}<N_{i'}$, $j=1,..,k$ such that
  $f^{-M_{i_j}}(\theta'_u)\subset\operatorname{Int}(D')$. Recall that $f^{N_{i'}}(\theta'_s)$ 
  intersects $\theta'_u$ at the point $f^{N_{i'}}(A)$. 
  So $f^{-M_{i_j}+N_{i'}}(A)\in f^{-M_{i_j}+N_{i'}}(\theta'_s)\cap \operatorname{Int}(D')$. It is a contradiction
  with (\ref{eq:stable}) since $-M_{i_j}+N_{i'}>0$.	
	Therefore the domain $D'$, associated to the bigon $\mathcal{I}$, satisfies
 pruning conditions which proves theorem \ref{thm:methodthm}.
 \end{proof}

\subsection{Pruning Method} 
 Now it is clear how the pruning method works.
 \begin{algorithm} Given an orbit (periodic or homoclinic) $P$ of a Smale map $f$ 
 then:
  \begin{itemize}
 \item[(1)] If $f$ has a bigon relative to $P$, applying  theorem \ref{thm:methodthm} 
 to $f$ and $P$, we can find a pruning domain $D_1$.
 Then theorem \ref{theo:pruning} allows us to construct a pruning diffeomorphism $f_1$. 
 \item[(2)] If $f_1$ has no bigons relative to $P$ then, by theorem \ref{thm:first}, 
every periodic orbit  of $f_1$  is forced  by $P$.
  \item[(3)] If $f_1$ has bigons then return to (1).
  \item[(4)] Repeating (1), (2) and (3) we will obtain a sequence $f_1,f_2\cdots$.
  If this sequence is finite $\{f_1,\cdots, f_n\}$  then, by   theorem \ref{thm:first}, the  orbits of 
	the basic set of $f_n$ are forced by $P$ up a finite-to-one semiconjugacy which is injective 
	on the set of periodic orbits.
 \end{itemize}
\end{algorithm}

\section{Acknowledgements}\label{ackref}
This research was partially supported by FAPESP grant 2010/20159-6.
I would like to thank the institute IME-USP for the hospitality during
the time in which parts of this work was done.
I am grateful to Andr\'e de Carvalho for his stimulating conversations about pruning, 
to Toby Hall that has written the useful programs \textit{Prune} and \textit{Train} 
to explore the pruning regions and the train track of periodic horseshoe points, 
and to Philip Boyland  who talk us about isotopy stability during his stay at IME-USP.
I would also thank the referee of the first draft of this manuscript who 
made many helpful comments for improving some unclear aspects
contained in the proofs.


\begin{thebibliography}{}

%\bibliographystyle{alpha}
%\bibliography{biblio} 

%\bibitem{BihWen}Biham, O. and Wenzel, W.:
%{\em Characterization of unstable
%periodic orbits in chaotic attractors and repellers}. Phys. Rev. Lett.,
%\textbf{63(8)}:819-822, 1989.

\bibitem{AraGriEnt} {\sc  S. K. Aranson and V.Z. Grines}. 
{\em Homeomorphisms with minimal entropy on two-dimensional manifolds}. 
Russian Math. Surveys \textbf{36:2} (1981), 163--198.

\bibitem{AraGri} {\sc S. K. Aranson and V.Z. Grines}. 
{\em The topological classification of cascades on closed 
two-dimensional manifolds}. Russian Math. Surveys \textbf{45:1} (1990), 1--35.


\bibitem{AsiFra}{\sc D. Asimov and  J. Franks},
{\em Unremovable closed orbits}, in Geometric Dynamics (Rio de Janeiro, 1981),
Lectures and Notes in Math. \textbf{1007}. Springer, Berl\'in, (1983), 22--39.

\bibitem{BegBonYu}{\sc F. Beguin, C. Bonatti and B. Yu}.
{\em Building Anosov flows in three-manifolds}. {Geom. Topol.}
\textbf{21(3)} (2017), 1837-1930.

%\bibitem{BesHan}{\sc M. Bestvina and M. Handel},
%{\em Train-Tracks for Surface homeomorphisms}. 
%Topology, Vol \textbf{34}, 109--140 (1995).

%\bibitem{BirBook} {\sc J. Birman},
%{\em Braids, links and mapping class groups}. Annals of Mathematics Studies \textbf{82}.
%Princeton University Press, Princeton (1974).


\bibitem{BonLanJea}{\sc C. Bonatti, R. Langevin and E. Jeandenans},
{\em Diff{\'e}omorphismes de Smale des surfaces}. 
Soci{\'e}t{\'e} Math{\'e}matique de France (1998).

\bibitem{Boy}{\sc P. Boyland},
{\em Topological methods in surface dynamics}.  
Topology and its Applications \textbf{54} (1994), 223--298.

\bibitem{BoyIso}{\sc P. Boyland},
{\em Isotopy stability of dynamics on surfaces}. 
Contemp. Math. \textbf{246} (1999), 17--46.

\bibitem{BoyHall}{\sc P. Boyland and T. Hall},
{\em Isotopy stable dynamics relative to compact invariant sets}. 
Proc. London Math. Soc. \textbf{79(3)}  (1999), 673--693.

\bibitem{CanCon}{\sc J. Cantwell and L. Conlon},
{\em Hyperbolic geometric and homotopic homeomorphisms of surfaces}. 
 { Geometriae Dedicata}   \textbf{177(1)} (2015), 27--42.
 
\bibitem{CanCon2}{\sc J. Cantwell and L. Conlon},
{\em Endperiodic automorphisms of surfaces and foliations}.
arXiv:1006.4525v6 (2017).

\bibitem{Coll1}{\sc P. Collins},
{\em Dynamics of surface diffeomorphisms relative to homoclinic and heteroclinic
orbits}. Dyn. Syst., \textbf{19(1)}  1--39  (2004).

%\bibitem{Coll2}{\sc P. Collins},
%{\em Forcing relations for homoclinic orbits of the Smale horseshoe map},
%Experiment. Math. \textbf{14(1)} 75--86  (2005).

\bibitem{Coll3} {\sc P. Collins},
{\em Entropy-minimizing models of surface  diffeomorphisms relative to homoclinic 
and heteroclinic orbits},
Dyn. Syst. \textbf{20(4)} 369--400  (2005).

\bibitem{Cvi}{\sc P. Cvitanovi\'c},
{\em Periodic orbits as the skeleton of classical and quantum chaos}.
Physica D \textbf{51}  138--151 (1991).

%\bibitem{ColEck}Collet, P. and Eckmann, J-P.:
%{\em Iterated Maps on the Interval as Dynamical Systems}.Birkhauser, Boston.
% 1980. 
 
\bibitem{dC1}{\sc A. de Carvalho},
{\em Pruning fronts and the formation of horseshoes}.  Ergod.Th. and Dynam. Sys.
 \textbf{19(4)}, 851--894 (1999).

\bibitem{dCExt} {\sc A. de Carvalho},
{\em Extensions, quotients and generalized pseudo-Anosov maps},
In {\em Graphs and patterns in mathematics and theoretical physics},
volume 73 of  Proc. Sympo. Pure Math., pages 315--338, Amer. Math. Soc.,
Providence, RI, (2005).

%\bibitem{dCarHallPru} {\sc A. de Carvalho and T. Hall},
%{\em Pruning theory and Thurston's classification of surface homeomorphisms}.
%J. Eur. Math. Soc. \textbf{3}, 287--333 (2001).

\bibitem{dCarHallFor} {\sc A. de Carvalho and T. Hall},
{\em The forcing relation for horseshoe braid types}.
Experiment. Math. \textbf{11(2)}, 271--288 (2002).

\bibitem{dCarHallBra} {\sc A. de Carvalho and T. Hall},
{\em Braid forcing and star-shaped train tracks}.
Topology \textbf{43}, 247--287 (2004).

%\bibitem{dCarHallCon} de Carvalho, A. and Hall, T.:
%{\em Conjugacies between horseshoe braids}.
%Nonlinearity \textbf{16}, 1329--1338 (2003).

%\bibitem{dCarHallDec} de Carvalho, A. and Hall, T.:
%{\em Decoration invariants for horseshoe braids}.
%Discrete and Continuous Dynamical Systems. Serie A \textbf{27(3)}, 863--906 (2010).



%\bibitem{MenPru}de Carvalho, A. and Mendoza, V.:
%{\em Pruning diffeomorphisms isotopic to Axiom A maps with bigons}. Pre-print. (2013).

%\bibitem{dCarHallUni} {\sc A. de Carvalho and T. Hall},
%{\em Unimodal generalized pseudo-anosov maps}.
%Geom. Topol. \textbf{8}  1127--1188 (2004).

%\bibitem{dCarHallPap} {\sc A. de Carvalho and T. Hall},
%{\em Paper folding, Riemann surfaces and convergence of pseudo-Anosov sequences}.
%Geom. Topol. \textbf{16(4)}, 1881--1966 (2012).

%\bibitem{dCarHallRie} {\sc A. de Carvalho and T. Hall},
%{\em Riemann surfaces out of paper}.
%Proc. London Math. Soc. \textbf{108(3)}, 541--574 (2014).

\bibitem{dCarMen}{\sc A. de Carvalho and V. Mendoza},
{\em Differentiable pruning and the hyperbolic pruning front conjecture}. Preprint (2014).

%\bibitem{Eps} {\sc D. B. A. Epstein},
%{\em Curves on $2$-manifolds and isotopies}. Acta Math., \textbf{115}, 83--107 (1966).

%\bibitem{FadVan}{\sc E. Fadell and J. Van-Buskirk},
%{\em On the braid groups on $E^n$ and $S^n$}. Duke Math. Journal. 
% \textbf{29(2)}, 243--257 (1962).

\bibitem{FatLauPoe}{\sc A. Fathi, F. Laudenbach and V. Po\'enaru},
{\em Travaux de Thurston sur les surfaces}. Asterisque \textbf{66-67} (1979).

\bibitem{Fen}{\sc S. Fenley},
{\em End periodic homeomorphisms and $3$-manifolds}. Mathematische Zeitschrift
\textbf{224}, 1--24 (1997).

%\bibitem{FraMis}{\sc J. Franks and M. Misiurewicz},
%{\em Cycles for disk homeomorphisms and thick trees},
%Contemp. Math. \textbf{152}, 69--139 (1993).
\bibitem{GhrHolSul}{\sc R. W. Ghrist, P. Holmes and M. C. Sullivan},
{\em Knots and links in three-dimensional flows}. Lectures Notes in Mathematics.
\textbf{1654} (1997). 

\bibitem{GriTop}{\sc V. Z. Grines},
{\em On the topological equivalence of one-dimensional basic sets of 
diffeomorphisms of two-dimensional manifolds}. Uspekhi Mat. Nauk
\textbf{29(6)} (1974) 163--164. 

\bibitem{GriRep}{\sc V. Z. Grines},
{\em A representation of one-dimensional attractors of A-diffeomorphisms
by hyperbolic homeomorphisms}. Mathematical Notes \textbf{62(1)} (1997) 64--73. 

\bibitem{GriMedPoc}{\sc V. Z. Grines, T. V. Medvedev, O. Pochinka},
{\em Dynamical systems on 2 and 3 manifolds}. Part of the Developments in Mathematics book series (DEVM, volume 46). Springer International Publishing, 2016. 


\bibitem{Hall1}{\sc T. Hall},
{\em Unremovable periodic orbits of homeomorphisms}. 
Math. Proc. Camb. Phil. Soc.  \textbf{110}, 523--531 (1991).

\bibitem{Hall}{\sc T. Hall},
{\em The creation of horseshoes}. Nonlinearity. 
 \textbf{7}, 861--924 (1994).

\bibitem{Han}{\sc M. Handel},
{\em Global shadowing of pseudoanosov homeomorphisms}. Ergod.Th. and Dynam. Sys. 
 \textbf{5}, 373--377 (1985).
 
 \bibitem{Han99}{\sc M. Handel}.
 A fixed-point theorem for planar homeomorphisms. 
 \textit{Topology}. \textbf{38(2)} (1999) 235--264.

\bibitem{HanMil}{\sc M. Handel and R. T. Miller},
{\em End periodic surface homeomorphisms}. Unpublished manuscript.

\bibitem{Hul}{\sc H. M. Hulme},
{\em Finite and infinite braids: a dynamical systems approach}. 
Ph.D. thesis (Univ. of Liverpool, 2000).

\bibitem{HolWhi}{\sc P. Holmes and D. Whitley},
{\em Bifurcations of one- and two-dimensional maps}. Phil. Trans. R. Soc. Lond. A. 
 \textbf{311(1515)}, 43--102 (1984).

\bibitem{KinCol} {\sc E. Kin and P. Collins},
{\em Templates forced by the Smale horseshoe trellises} (developments
and applications of dynamical systems theory),
RIMS Kokyuroku \textbf{1369}  204--211 (2004).

\bibitem{LewUre}{\sc J. Lewowicz and R. Ures},
{\em On basic pieces of Axiom A diffeomorphisms isotopic to pseudoanosov maps}. 
Asterisque,  \textbf{287}, 125--134 (2003).

%\bibitem{Los}{\sc J. Los},
%{\em Pseudo-anosov maps and invariant train tracks in the disc: a 
%finite algorithm}. Proc. London Math. Soc. \textbf{3}, 400--430 (1993).

\bibitem{LosFor}{\sc J. Los},
{\em On the forcing relation for surface homeomorphisms}.
Publications math\'ematiques de l'I.H.\'E.S. \textbf{85}, 5--61 (1997).

\bibitem{MenOrd}{\sc V. Mendoza},
{\em Ordering periodic horseshoe orbits: Pruning models and forcing}. Preprint. (2014).

%\bibitem{Men}Mendoza: 
%{\em Pruning theory in the H\'enon family}.
%PHD Thesis, IME-USP, 2011. In portuguese.

\bibitem{MenRes}{\sc V. Mendoza}, {\it Domains}: software available at https://www.dropbox.com/s/w701quf3viooycm/Domains.exe

\bibitem{MilThu}{\sc J. Milnor and W. Thurston},
{\em On iterated maps of the Interval}.  Dynamical Systems (Lectures Notes in Mathematics 
Vol. 1342). Berlin:Springer. 465--563 (1988).


\bibitem{NewPal}{\sc S. Newhouse and J. Palis},
{\em Hyperbolic nonwandering sets on two-manifolds}, Dynamical Systems,
ed. M. Peixoto, Acad. Press, 293--301 (1973).

\bibitem{Ply}{\sc R. V. Plykin},
{\em Sources and sinks of A-difeomorphisms of surfaces}.
Jour. Math. USSR-Sb. \textbf{23(2)}   233--253 (1974).

%\bibitem{Rom1}{\sc  V. Rom-Kedar},
%{\em Transport rates of a class of two-dimensional maps and flows}.
% Phys. D \textbf{43(2-3)}  229--268 (1990).

%\bibitem{Rom2}{\sc V. Rom-Kedar},
%{\em Homoclinic tangles - classification and applications}.
%Nonlinearity \textbf{7(2)}  441--473 (1994).

%\bibitem{Rua}{\sc G. Ruas},
%{\em Atratores hiperb\'olicos  de codimens\~ao um e classes de isotopia em superf\'icies}.
%Informes de Matem\'atica, IMPA. Serie F-01/82, (1982). 

%\bibitem{Sha}Sharkovsky, A.:
%{\em Coexistence of cycles of a continuous mapping of the line into itself}. 
%Ukrain. Mat. Z. , \textbf{16}, 61--71 (1964).

%\bibitem{Sch}{\sc K. Scholz},
%{\em The Nielsen fixed point theory for non-compact spaces}. Rocky Mountain Journal of Mathematics.
%\textbf{4(1)}, 81--87 (1974).

\bibitem{Sma}{\sc S. Smale},
{\em Differentiable dynamical systems}. Bull. Amer. Math. Soc.,
\textbf{73}, 747--817 (1967).

%\bibitem{SolNat}{\sc H. Solari and M. Natiello},
%{\em Minimal periodic orbits structure of 2-dimensional homeomorphisms}. 
%Journal of Nonlinear Science. \textbf{15(3)}, 183--222 (2005).

%\bibitem{Thu}Thurston, W.:
%{\em On the geometry and dynamics of diffeomorphisms of surface}. 
%Bull.  Amer. Math. Soc. (N. S.), \textbf{19}, 417--431 (1988).

\bibitem{TanYam1} {\sc K. Tanikawa and Y. Yamaguchi},
{\em Forcing relations for homoclinic orbits of the reversible horseshoe map}.
 Progr. Theoret. Phys. \textbf{126(5)}  811--839 (2011).

\bibitem{Will}{\sc R. F. Williams},
{\em The {DA} maps of {S}male and structural stability}. 
Global Analysis, Proc.  of Symp. Pure Math., \textbf{14}, 329--334 (1970).

\bibitem{TanYam2}{\sc Y. Yamaguchi and K. Tanikawa},
{\em Order of appearance of homoclinic points for the H\'enon map},
Progr. Theoret. Phys. \textbf{116(6)}  1029--1049 (2006).

\bibitem{TanYam3}{\sc Y. Yamaguchi and K. Tanikawa},
{\em A new interpretation of the symbolic codes for the H\'enon map},
Progr. Theoret. Phys. \textbf{122(3)}  569--609 (2009).

\end{thebibliography}
\end{document}